\title{A Bennequin-type inequality and combinatorial bounds}
\author[C.\ Collari]{Carlo Collari}
\address{IMT, Universit\'{e} Paul Sabatier\\
118, route de Narbonne\\
F-31062 Toulouse Cedex 9}
\email{collari@math.unifi.it}
\urladdr{http://poisson.phc.unipi.it/~collari}
\theoremstyle{plain}
	\newtheorem{theorem}{\noindent\textbf{Theorem}}
	\newtheorem*{theorem*}{\noindent\textbf{Theorem}}
	\newtheorem{lemma}[theorem]{\noindent\textbf{Lemma}}
	\newtheorem{proposition}[theorem]{\noindent\textbf{Proposition}}
	\newtheorem*{proposition*}{\noindent\textbf{Proposition}}
	\newtheorem{corollary}[theorem]{\noindent\textbf{Corollary}}
	\newtheorem*{corollary*}{\noindent\textbf{Corollary}}
\theoremstyle{definition}
	\newtheorem{definition*}{\noindent\textbf{Definition}}
\theoremstyle{remark}
	\newtheorem{rem*}{\noindent\textbf{Remark}}
	\newtheorem*{warning}{\noindent\textbf{Warning}}
\theoremstyle{definition}
	\newtheorem{definition}{\noindent\textbf{Definition}}
	\newtheorem{example}{\noindent\textbf{Example}}
	\newtheorem{question}{\noindent\textbf{Question}}
\newcommand{\ocrosstext}{\raisebox{-0.1cm}{\begin{tikzpicture}[scale=.07]
\begin{scope}[shift={(2,2)}]
\draw (+2,-2) -- (-2,+2);
\pgfsetlinewidth{8*\pgflinewidth}
\draw[white] (-2,-2) -- (+2,+2);
\pgfsetlinewidth{.125*\pgflinewidth}
\draw (-2,-2) -- (+2,+2);
\end{scope}
\draw[dashed] (2,2) circle (2.8cm);
\end{tikzpicture}}\:}
\newcommand{\isplittext}{\raisebox{-0.1cm}{\begin{tikzpicture}[scale=.07]
\begin{scope}[shift={(2,2)}]
\draw (-2,-2) .. controls +(1,1) and +(1,-1) ..  (-2,+2);
\draw  (2,-2).. controls +(-1,1) and +(-1,-1) ..  (2,+2);
\end{scope}
\draw[dashed] (2,2) circle (2.8cm);
\end{tikzpicture}}\:}
\newcommand{\osplittext}{\raisebox{-0.1cm}{\begin{tikzpicture}[scale=.07]
\begin{scope}[shift={(2,2)}]
\draw (-2,-2) .. controls +(1,1) and +(-1,1) ..  (2,-2);
\draw (-2,+2) .. controls +(1,-1) and +(-1,-1) ..  (2,+2);
\end{scope}
\draw[dashed] (2,2) circle (2.8cm);
\end{tikzpicture}}\:}
\begin{document}

\begin{abstract}
In this paper we provide a new Bennequin-type inequality for the Rasmussen-Beliakova-Wehrli invariant, featuring the numerical transverse braid invariants (the $c$-invariants) introduced by the author. From the Bennequin type-inequality, and a combinatorial bound on the value of the $c$-invariants, we deduce a new computable bound on the Rasmussen invariant.
\end{abstract}
\maketitle
\section{Introduction}

\subsection*{Motivations and statement of results}

A contact structure on a $3$-dimensional manifold is a totally non-integrable plane distribution. One of the most basic examples of contact structure is the \emph{symmetric contact structure} in $\mathbb{R}^3$, that is to say
\[ \xi_{sym} = Ker(dz - xdy + ydx),\]
where $(x,y,z)$ is a fixed system of coordinates in $\mathbb{R}^3$. The presence of a contact structure allows one to distinguish special families of knots and links. In particular, one can define the \emph{transverse links} to be all the smooth links which are nowhere tangent to $\xi_{sym}$. The study of transverse links is a central topic in low-dimensional and contact topology (see \cite{Etnyre05} for an overview).

Two transverse links are considered to be \emph{equivalent} if they can be moved one onto the other via an ambient isotopy, in such a way that at each instant the transversality condition is preserved. Transverse links have two \emph{classical invariants}: the (smooth) link-type, and the \emph{self-linking number} $sl$. The latter is a numerical invariant of the equivalence class of a transverse link. Finding invariants (\emph{effective invariants}) which can distinguish between transverse links with the same classical invariants is an important problem in low-dimensional contact topology.

In \cite{TransFromKhType17} the author introduced two numerical invariants, called $c$-invariants, for transverse links presented as closed braids (see \cite{Bennequin83, OrevkovShev03, Wrinkle03} for more details on this presentation). These invariants, denoted by $c_{\mathbb{F}}$ and $\overline{c}_{\mathbb{F}}$, are two non-negative integers. It is not known whether or not these invariants are effective. A proof of them being non-effective would imply that (the vanishing of) the Plamenevskaya invariant $\psi$ is non-effective, which is an open problem at the time of writing. Applications aside, it is pretty difficult in general to compute the $c$-invariants. One of the main results of this paper is the introduction of a lower bound on the values of the $c$-invariants. Even if there is no equality in general, this bound can be used to compute the $c$-invariant in a number of cases.

Before stating the result it will be necessary to fix some terminology and notation. This notation will be used throughout the paper. Let $D$ be an oriented link diagram. It is possible to associate to $D$ a graph $\Gamma(D)$, which we call the \emph{simplified Seifert graph}. The vertices of $\Gamma(D)$ are the circles in the oriented resolution of $D$, and there is an edge between two vertices if the corresponding circles shared at least a crossing in $D$. The edges of the simplified Seifert graph can be divided into three classes: (1) \emph{positive}, if the circles corresponding to the endpoints of the edge shared only positive crossings, (2) \emph{negative} if the circles corresponding to the endpoints of the edge shared only negative crossings, and (3) \emph{neutral}, if the edge is neither positive nor negative. A vertex $v$ of $\Gamma(D)$ is called \emph{positive} (resp. \emph{negative}) if there are only positive (resp. negative) edges incident in $v$. A vertex which is neither positive nor negative is called \emph{neutral}. We will call \emph{pure} each vertex which is connected only to vertices of the same type as itself.
\begin{rem*}
An edge incident in a neutral vertex is not necessarily neutral. 
\end{rem*}
Finally, given an oriented link diagram $D$ we define the following quantities:
\begin{enumerate}
\item $V(D)$ = the number of vertices in $\Gamma(D)$;
\item $V_{\pm}(D)$ = the number of positive (resp. negative) vertices in $\Gamma(D)$;
\item $\ell_{\pm}(D)$ = the number of connected components of $\Gamma(D)$ composed only of positive (resp. negative) vertices;
\item $s_{\pm}(D)$ = the number of connected components of the graph obtained from $\Gamma(D)$ by removing all the negative (resp. positive) edges
\item $
\delta_{-}(D) = \begin{cases} 1 & \text{if there is a negative edge between two neutral vertices in }\Gamma(L) \\ 0 &\text{otherwise}\end{cases}$
\end{enumerate}

Now we can state our bound, which will be proved in Subsection \ref{Combinatorialbounds}.

\begin{theorem}\label{thm:main1}
Let $B$ be a braid and let $\mathbb{F}$ be any field. Then, the following inequality holds
\[\overline{c}_{\mathbb{F}}(B),\: c_{\mathbb{F}}(B) \geq V_{-}(B) - \ell_{-}(B) + \delta_{-}(B),\]
where $\overline{c}_{\mathbb{F}}(B)$ and $c_{\mathbb{F}}(B)$ denote the $c$-invariants of the braid $B$ over the field $\mathbb{F}$.
\end{theorem}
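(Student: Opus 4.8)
\emph{The plan.} I would argue by induction on the number of negative crossings of the braid word $B$, via a single move that erases a whole negative edge of $\Gamma(B)$. Recall first, from \cite{TransFromKhType17}, the features of the $c$-invariants that the argument needs: $c_{\mathbb{F}}(B)$ and $\overline{c}_{\mathbb{F}}(B)$ are non-negative integers, invariant under braid conjugation and positive (de)stabilisation; they are read off from the Bar-Natan chain complex $C_{BN}(B)$ together with the Plamenevskaya transverse cycle $\psi(B)\in C_{BN}^{0}(B)$, supported on the oriented resolution with every circle carrying the lower generator $v_-$; and they record, at the chain level, how deep in $C_{BN}(B)$ one must go to exhibit $\psi(B)$ as a boundary. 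In particular $c_{\mathbb{F}}$ vanishes on positive braids and increases by exactly one under a negative Markov stabilisation. Everything below is written for $c_{\mathbb{F}}$; the argument for $\overline{c}_{\mathbb{F}}$ is identical.

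\emph{The erasing move.} Suppose $\Gamma(B)$ has a negative edge, i.e.\ adjacent strands $i,i+1$ between which $B$ has only negative crossings; these occur as a block $\sigma_i^{-a}$, $a\ge 1$, and I write $B^{\flat}$ for the braid word with this block deleted. The oriented resolution does not see any crossings, so the oriented resolutions of $B$ and $B^{\flat}$ are canonically identified and $\psi(B)$ corresponds to $\psi(B^{\flat})$. Moreover the deleted block realises $C_{BN}(B^{\flat})$ as the corner of a sub-cube of resolutions inside $C_{BN}(B)$ --- so that $C_{BN}(B)$ is an iterated mapping cone with $C_{BN}(B^{\flat})$ on one end, and the two transverse cycles match up under the inclusion. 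Chasing $\psi(B)$ through this cone, generalising the model computation that underlies the negative-stabilisation formula of \cite{TransFromKhType17}, should yield the inequality $c_{\mathbb{F}}(B)\ge c_{\mathbb{F}}(B^{\flat})+1$.

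\emph{Closing the induction.} Granting that inequality, one chooses the block to erase so that $V_{-}-\ell_{-}+\delta_{-}$ drops by at most $1$; then $c_{\mathbb{F}}(B)\ge c_{\mathbb{F}}(B^{\flat})+1\ge (V_{-}-\ell_{-}+\delta_{-})(B^{\flat})+1\ge (V_{-}-\ell_{-}+\delta_{-})(B)$ by the inductive hypothesis, the base case $V_{-}-\ell_{-}+\delta_{-}=0$ being just $c_{\mathbb{F}}\ge 0$. For the choice of block: consider the subgraph of $\Gamma(B)$ spanned by the negative vertices. If one of its connected components has more than one vertex, or is joined by a (necessarily negative) edge to a neutral vertex, then erasing a ``leaf'' such edge makes $V_{-}-\ell_{-}$ fall by exactly $1$ and leaves $\delta_{-}$ unchanged; if instead there are no negative vertices but $\delta_{-}=1$, then erasing any negative edge between two neutral vertices keeps $V_{-}-\ell_{-}=0$ and cannot raise $\delta_{-}$. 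Since erasing crossings never increases $V_{-}-\ell_{-}+\delta_{-}$, in every case $(V_{-}-\ell_{-}+\delta_{-})(B^{\flat})\ge (V_{-}-\ell_{-}+\delta_{-})(B)-1$, and the induction goes through, the number of negative crossings strictly decreasing.

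\emph{The main obstacle.} The combinatorial bookkeeping is routine; the substance is the chain-level inequality $c_{\mathbb{F}}(B)\ge c_{\mathbb{F}}(B^{\flat})+1$ for erasing an entire negative edge. One cannot expect anything from removing just one crossing of a multi-crossing edge --- that leaves $V_{-}-\ell_{-}+\delta_{-}$ untouched --- so the proof genuinely uses that the full block $\sigma_i^{-a}$ is deleted at once, and must control, inside the cube of resolutions of $B$, the homological (and quantum) shift carried by that block together with the fact that the incoming differentials at it are comultiplications, hence act as the identity on the all-$v_-$ labelling of the oriented resolution. Making this work uniformly for all $a\ge 1$ --- the case $a=1$ being the negative destabilisation already treated in \cite{TransFromKhType17} --- is where the real effort lies.
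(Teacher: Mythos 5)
Your plan inducts on crossings via an ``erasing move'' $B \rightsquigarrow B^{\flat}$ that deletes all (negative) crossings between two adjacent Seifert circles, and hinges entirely on the chain-level inequality $c_{\mathbb{F}}(B)\ge c_{\mathbb{F}}(B^{\flat})+1$. That inequality is exactly where the argument is incomplete, and you say as much yourself (``the substance is the chain-level inequality\dots where the real effort lies''). But the difficulty is more serious than a routine computation: the all-$1$ fibre of the block coordinates realizes $C_{BN}(B^{\flat})$ as a subcomplex of $C_{BN}(B)$, and the two $\beta$-cycles literally coincide under this inclusion; so if $\beta(B^{\flat})=U^{k}x'+d z'$ then trivially $\beta(B)=U^{k}x'+d z'$ as well, giving only $c_{\mathbb{F}}(B)\ge c_{\mathbb{F}}(B^{\flat})$ and no extra power of $U$ for free. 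Extracting the extra factor of $U$ requires producing a boundary in $C_{BN}(B)$ supported outside that subcomplex, and your observation that ``the incoming differentials are comultiplications'' does not by itself produce one. The only case where \cite{TransFromKhType17} actually establishes such a shift is a negative Markov \emph{de}stabilization ($a=1$ with the two strands otherwise unlinked, point (4) of Proposition~\ref{Prop:betainvariants}); in general the block is not a stabilization, since the two strands may be tied together by other crossings, and there is no map of braid-closure complexes analogous to $\Phi_{1}^{-}$. So the heart of your proof is not a technicality: it is an unproven claim, and it is not clear it holds at this generality.

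The paper's own proof avoids this obstacle entirely: it never modifies the diagram nor compares $c$-invariants of different braids. Instead, working inside a single complex $C_{BN}^{\bullet,\bullet}(D,R[U])$, it writes down explicit enhanced states $x(\Gamma')$, $x(\Gamma',v_{0})$ and $y(\Gamma',v_{0},c)$ supported on the oriented resolution and on one-step-lower resolutions of $D$, proves the elementary identity $x(\Gamma')-x(\Gamma',v_{0})=\pm U\,x(\Gamma'\setminus\{v_{0}\})$ (coming from $x_{\circ}-x_{\bullet}=U$), and shows that $x(\Gamma',v_{0})$ is a boundary whenever $v_{0}$ is non-pure or non-isolated in $\Gamma'$ (Lemma~\ref{Lem:combinatoriallemmaboundonc}), with Lemma~\ref{Lem:deltaminus} supplying the extra $\delta_{-}$ factor. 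Peeling off vertices along a carefully ordered spanning tree of $\Gamma_{-}$ then produces the factorisation $\beta(D)=U^{V_{-}-\ell_{-}+\delta_{-}}y+d_{BN}z$ all at once (Theorem~\ref{Thm:estimateonc}), from which the bound is immediate. Your instinct about which vertices should be ``peelable'' (the non-pure or non-isolated ones, together with the separate $\delta_-$ mechanism for neutral--neutral negative edges) is precisely the combinatorics the paper encodes, so the shape of the bound is the same; the essential difference is that the paper converts the step into a concrete boundary $y(\Gamma',v_{0},c)$ living in $C_{BN}(D)$ itself, rather than appealing to a monotonicity of $c_{\mathbb{F}}$ under deletion of crossings. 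You should either prove $c_{\mathbb{F}}(B)\ge c_{\mathbb{F}}(B^{\flat})+1$ directly (which I suspect will lead you to rebuild something very close to Lemma~\ref{Lem:combinatoriallemmaboundonc}) or abandon the induction and carry out the argument within a single cube of resolutions.
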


A lower bound is not, by itself, useful to compute a numerical invariant; so there is also the necessity of an upper bound. An upper bound for the $c$-invariants can be deduced from the following inequality, which is our second result (whose proof can be found in Subsection \ref{sec:Bennequinineq}).

\begin{theorem}[Bennequin-type inequality]\label{thm:main2}
Let $\lambda$ be an oriented link type, and let $T$ be a transverse representative of $\lambda$. Then, for each field $\mathbb{F}$, the following inequality holds
\[ s(\lambda; \mathbb{F}) \geq sl(T) + 2 c_{\mathbb{F}}(T) + 1,\]
where $s(\lambda;\mathbb{F})$ is the Rasmussen-Beliakova-Werhli (RBW) $s$-invariant of $\lambda$ (\cite{Rasmussen10, BeliakovaWehrli08}).
\end{theorem}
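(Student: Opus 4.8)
The plan is to read off both sides of the inequality as quantum filtration levels in Bar--Natan homology and then compare them. First I would realise $T$ as the closure of the braid $B$ on $n$ strands, with diagram $D$ and writhe $w(D)$, so that $sl(T)=w(D)-n$. The Plamenevskaya cycle $\psi_{BN}(D)$ is the generator of the Bar--Natan chain complex living in the oriented resolution of $D$ with every circle labelled $v_{-}$; it is a cycle in homological degree $0$, and a direct bookkeeping of the quantum grading gives $q\bigl(\psi_{BN}(D)\bigr)=w(D)-n=sl(T)$. By construction in \cite{TransFromKhType17}, the invariant $c_{\mathbb{F}}(B)$ measures the failure of the homology class $[\psi_{BN}(D)]\in H_{BN}(\lambda;\mathbb{F})$ to be carried by a cycle of this minimal degree: working over $\mathbb{F}[H]$ (with $\deg H=-2$) and writing $[\psi_{BN}(D)]=H^{\,c_{\mathbb{F}}(B)}\,y$ with $y$ not divisible by $H$, the integer $c_{\mathbb{F}}(B)$ is this maximal power, which is well defined and non-negative because $[\psi_{BN}(D)]\ne 0$. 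I will write $\mathrm{gr}(-)$ for the quantum filtration level of a homology class.

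The two ingredients to combine are then: (i) since $\psi_{BN}(D)$ is itself a degree-$sl(T)$ cycle representing $[\psi_{BN}(D)]$ and multiplication by $H$ lowers the filtration level by exactly $2$ on the free $\mathbb{F}[H]$-module $H_{BN}(\lambda;\mathbb{F})$, one gets $\mathrm{gr}(y)=\mathrm{gr}\bigl([\psi_{BN}(D)]\bigr)+2\,c_{\mathbb{F}}(B)\ge sl(T)+2\,c_{\mathbb{F}}(B)$; and (ii) the RBW invariant is $s(\lambda;\mathbb{F})=s_{\min}(\lambda;\mathbb{F})+1$, where $s_{\min}(\lambda;\mathbb{F})=\mathrm{gr}(\mathfrak{s}_{o}+\mathfrak{s}_{\bar o})$ is the filtration level of the symmetric sum of the canonical Lee/Bar--Natan generators attached to an orientation $o$ and to its total reverse $\bar o$, and $\mathrm{gr}(\mathfrak{s}_{o}-\mathfrak{s}_{\bar o})=s_{\min}+2$ (\cite{Rasmussen10,BeliakovaWehrli08}). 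The heart of the argument is the claim that the $H$-primitive representative $y$ of the Plamenevskaya line agrees, up to a unit of $\mathbb{F}[H]$ and up to terms of strictly higher filtration, with the symmetric combination $\mathfrak{s}_{o}+\mathfrak{s}_{\bar o}$; granting this, $\mathrm{gr}(y)=s_{\min}(\lambda;\mathbb{F})=s(\lambda;\mathbb{F})-1$, and feeding this into (i) yields $sl(T)+2\,c_{\mathbb{F}}(B)\le s(\lambda;\mathbb{F})-1$, which is the theorem.

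The main obstacle is precisely this identification --- equivalently, excluding the possibility that $[\psi_{BN}(D)]$ lies, after clearing its $H$-divisibility, on the ``high'' side $\mathfrak{s}_{o}-\mathfrak{s}_{\bar o}$ of the filtration, which would weaken the bound by $2$. I would prove it using the cobordism functoriality of the Plamenevskaya class from \cite{TransFromKhType17}: a sequence of braid (Markov) moves realises an oriented cobordism from $T$ to the $n$-component transverse unlink whose induced filtered $\mathbb{F}[H]$-module map sends $[\psi_{BN}(D)]$ to a unit multiple of the all-$v_{-}$ generator of the unlink, which is $H$-primitive and visibly symmetric under orientation reversal; pulling this symmetry and primitivity back along the map pins $y$ to the symmetric generator. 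Two points that I expect to require care, and which I would handle by working over $\mathbb{F}[H]$ from the start and for an arbitrary number of components, are showing that $\mathfrak{s}_{o}+\mathfrak{s}_{\bar o}$ is itself $H$-primitive (so that none of the divisibility of $[\psi_{BN}(D)]$ is ``spent'' on it) and checking the passage from Rasmussen's original knot normalisation to the Beliakova--Wehrli normalisation of $s(\lambda;\mathbb{F})$ for links.
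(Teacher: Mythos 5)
Your overall strategy—read off $sl(T)$ as the quantum degree of a distinguished degree-$0$ Bar--Natan cycle, use the $U$-divisibility encoded by $c_{\mathbb{F}}$, and compare to the filtration level that computes $s(\lambda;\mathbb{F})-1$—is the same strategy the paper follows. However, there are two genuine problems with how you set it up.

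First, you work with the wrong cycle. You define $c_{\mathbb{F}}$ via the all-$X$ enhanced state $\psi_{BN}(D)$ in the oriented resolution. But the $c$-invariant in the paper is, by definition, the maximal $U$-power dividing the class of $\beta(D,\mathbb{F})$, where $\beta$ labels circles by $x_\circ = X$ or $x_\bullet = X - U$ according to nesting parity. These two cycles coincide only after killing $U$ (both descend to Plamenevskaya's $\psi$ in Khovanov homology), but they are different Bar--Natan cycles, and only $\beta$ has the crucial feature that $\pi_{TLee}(\beta) = \mathbf{v}_{o_D}$, the Lee/Bar--Natan canonical generator for the given orientation. Without that equality the connection to $s(\lambda;\mathbb{F})$ breaks: $\pi_{TLee}(\psi_{BN})$ is the all-$X$ state in twisted Lee theory, not $\mathbf{v}_{o_D}$, and there is no a priori reason its class should compute $s-1$.

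Second, the ``heart of the argument'' you isolate—that the $U$-primitive class $y$ agrees, up to units and higher-filtration error, with $\mathfrak{s}_o+\mathfrak{s}_{\bar o}$—is both unproved and more than is needed, and your proposed proof via a cobordism to the unlink would not pin this down (such a map need not be injective nor filtration-sharp, so ``pulling back'' symmetry and primitivity does not transport). The paper avoids this entirely via two much softer observations: (a) the pairs $\{[\mathbf{v}_{o}],[\mathbf{v}_{-o}]\}$ and $\{[\mathbf{v}_{o}+\mathbf{v}_{-o}],[\mathbf{v}_{o}-\mathbf{v}_{-o}]\}$ span the same submodule of $H_{TLee}^{0}$, so the minimum filtered degree over the two sets is the same; and (b) the filtered automorphism of $A_{TLee}$ swapping $x_\circ \leftrightarrow x_\bullet$ commutes with the differential and exchanges $[\mathbf{v}_{o}]$ and $[\mathbf{v}_{-o}]$, so these have equal filtered degree. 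Together these yield $Fdeg([\mathbf{v}_{o_D}]) = s(\lambda;\mathbb{F})-1$, which is exactly the input needed. Then, writing $[\beta]=U^{c_{\mathbb{F}}}[x]$ with $x$ homogeneous of quantum degree $w(D)-V(D)+2c_{\mathbb{F}}$, one observes $[\mathbf{v}_{o_D}]=[\pi_{TLee}(x)]$ and uses $Fdeg(\pi_{TLee}(x))\geq qdeg(x)$ to conclude. The inequality lives in the passage from the bigraded Bar--Natan class to the filtered twisted Lee class; your step (i) treats the Bar--Natan bi-degree of a homology class as if it could differ from that of its cycle representative, which is another small misfire since that theory is honestly bi-graded.
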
  

The inequality in Theorem \ref{thm:main2} is part of a family of inequalities relating topological and transverse invariants. These bounds are named collectively \emph{Bennequin-type inequalities}. In particular, our inequality sharpens a similar result due, independently, to O. Plamenvskaya (\cite{Plamenevskaya06}) and A. Shumakovitch (\cite{Shumakovitch04}) which is the following 
\[ s(\lambda; \mathbb{F}) \geq sl(T) + 1.\]

Finally, proving Theorems \ref{thm:main1} and \ref{thm:main2} in to a slightly more general setting, and putting them together we obtain our third result, which is a combinatorial bound on the value of the $s$-invariant. This will be done in Subsection \ref{Combinatorialbounds}.

\begin{theorem}\label{thm:main3}
Let $D$ be an oriented link diagram representing the oriented link $L$. Then, we have the following bound
\begin{equation}
\tag{Cbound}
 s(L;\mathbb{F}) \geq w(D) - V(D) + 2 V_-(D) - 2 \ell_{-}(D) + 2\delta_{-}(D) + 1.
 \label{combinatorialboundons}
\end{equation}
where $w(D)$ indicates the \emph{writhe} of $D$ (i.e. the number of positive crossings minus the number of negative crossings). Moreover, the bound is sharp for negative and positive diagrams.
\end{theorem}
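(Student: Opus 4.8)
The plan is to derive \eqref{combinatorialboundons} by combining diagram-level refinements of Theorems \ref{thm:main1} and \ref{thm:main2}. First I would prove Theorem \ref{thm:main2} in the form: for \emph{any} oriented diagram $D$ of $L$ there is a number $c_{\mathbb{F}}(D)$ — the $c$-invariant of the diagram, agreeing with $c_{\mathbb{F}}(T)$ when $D$ presents the transverse link $T$ as a closed braid, in which case $sl(T) = w(D) - V(D)$ — such that
\[ s(L;\mathbb{F}) \geq w(D) - V(D) + 2\,c_{\mathbb{F}}(D) + 1 . \]
Next I would re-run the argument for Theorem \ref{thm:main1} with $D$ in place of a braid, since it only involves the combinatorics of $\Gamma(D)$, obtaining $c_{\mathbb{F}}(D) \geq V_{-}(D) - \ell_{-}(D) + \delta_{-}(D)$. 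Feeding the second inequality into the first produces exactly \eqref{combinatorialboundons}.

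For the sharpness statement, suppose first that $D$ is a connected positive diagram. Then $\Gamma(D)$ contains no negative and no neutral vertices, so $V_{-}(D)=\ell_{-}(D)=\delta_{-}(D)=0$ and \eqref{combinatorialboundons} becomes the Plamenevskaya--Shumakovitch bound $s(L;\mathbb{F}) \geq w(D) - V(D) + 1$. To see this is an equality I would use the universal cobordism inequality $s(L;\mathbb{F}) \leq 1 - \chi(\Sigma)$, valid for any connected oriented surface $\Sigma \subset B^4$ bounding $L$, applied to the surface $\Sigma_D$ produced by Seifert's algorithm, for which $\chi(\Sigma_D) = V(D) - w(D)$ (recall $c(D) = w(D)$ here). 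The general positive case follows by splitting $D$ into connected components and using $s(L'\sqcup L'';\mathbb{F}) = s(L';\mathbb{F}) + s(L'';\mathbb{F}) - 1$. Either way $s(L;\mathbb{F}) = w(D) - V(D) + 1$, which is the right-hand side of \eqref{combinatorialboundons}.

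For a negative diagram $D$, decompose $D = D_1 \sqcup \dots \sqcup D_\kappa$ into connected pieces; since the connected components of $\Gamma(D)$ are exactly those of $D$ and every vertex is negative, $\kappa = \ell_{-}(D)$, $V_{-}(D) = V(D)$ and $\delta_{-}(D) = 0$, so \eqref{combinatorialboundons} reads $s(L;\mathbb{F}) \geq w(D) + V(D) - 2\kappa + 1$. Each mirror $\overline{D_i}$ is a connected positive diagram, so by the previous paragraph and the behaviour of $s$ under mirror images, $s(L_i;\mathbb{F}) = -s(\overline{L_i};\mathbb{F}) = w(D_i) + V(D_i) - 1$; summing over $i$ and applying the disjoint-union formula $\kappa - 1$ times gives $s(L;\mathbb{F}) = w(D) + V(D) - 2\kappa + 1$. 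Hence \eqref{combinatorialboundons} is an equality for negative diagrams as well.

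The substantive difficulty is entirely upstream: extending the definition of the $c$-invariant from transverse braids to arbitrary diagrams, and porting both Theorem \ref{thm:main1} and Theorem \ref{thm:main2} to that setting while keeping the combinatorics of $\Gamma(D)$ under control — in particular handling neutral vertices, which (as the Remark notes) may be incident to non-neutral edges. Once that is in place, Theorem \ref{thm:main3} follows from the substitution above, and the sharpness claims reduce to the short computations with $\chi(\Sigma_D)$, mirror images and disjoint unions indicated in the two preceding paragraphs.
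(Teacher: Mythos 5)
The derivation of \eqref{combinatorialboundons} itself is essentially the paper's argument: the paper already states and proves both ingredients at the diagram level --- Proposition~\ref{Prop:Bennequins} holds for arbitrary oriented diagrams, and Corollary~\ref{Cor:combinatorialboundonc} gives $c_{\mathbb{F}}(D)\geq V_-(D)-\ell_-(D)+\delta_-(D)$ for any diagram --- with Theorems~\ref{thm:main1} and~\ref{thm:main2} being the braid-specific corollaries. The $c$-invariant is defined directly for any diagram $D$ in the paper, so no ``extension'' is required; your substitution is exactly what the paper does.

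Where you genuinely diverge is the sharpness claim, and your treatment of the negative case has a gap. The paper obtains sharpness by observing that for a positive or negative diagram one has $\delta_-(D)=0$ and $V_-(D)-\ell_-(D)=V(D)-s_-(D)$, so the right-hand side of \eqref{combinatorialboundons} coincides with the \emph{upper} bound $w(D)+V(D)-2s_-(D)+1$ from \eqref{Lobbbound}, forcing equality. Your Seifert-surface argument works fine for positive diagrams (including the disconnected case via the disjoint-union formula), and it is a legitimate alternative to invoking Lobb's upper bound there. But for negative diagrams you use $s(L_i;\mathbb{F})=-s(\overline{L_i};\mathbb{F})$, and this identity is not available: it holds for knots because $s$ is a concordance homomorphism, but Proposition~\ref{Prop:sproperties}(4) only gives $2-2\ell\leq s(L)+s(m(L))\leq 2$, which is not $0$ in general --- the $\ell$-component unlink already has $s(L)+s(m(L))=2-2\ell$. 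A connected negative diagram $D_i$ may well represent a multi-component link, so you cannot reduce to the knot case. Moreover, the Euler-characteristic bound applied to $\Sigma_{D_i}$ for a negative diagram only yields $|s(L_i;\mathbb{F})|\leq 1-\chi(\Sigma_{D_i})$, which reproduces the lower bound in \eqref{combinatorialboundons} but gives no matching upper bound. To close this without circularity you need an independent upper bound for negative diagrams; the paper's use of the upper bound in \eqref{Lobbbound} is the natural way to get it.
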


\begin{rem*}
A special case of \eqref{combinatorialboundons} without the $\delta_{-}$-term has been proved for non-positive and non-negative non-split link diagrams by Kawamura \cite{Kawamura15}. However, we remark that Kawamura's bound needs the non-split and the non-positivity/non-negativity hypotheses, while our bound does not. Furthermore, without the $\delta_{-}$-term Kawamura's bound is strictly less efficient than the bound in \eqref{KawCavbound} (due to Kawamura and extended for a special class of link by Cavallo). Finally, we remark that also \eqref{KawCavbound} needs some hypotheses either on the diagram (e.g. the diagram should be divided into non-split components) or on the link (e.g. the link should be pseudo-$Kh$-thin), and both these hypotheses can be arbitrarily hard to verify on a given diagram.
\end{rem*}

Recall that an oriented link diagram is \emph{almost-positive} if it has only a negative crossing, and at least a positive crossing. A link is \emph{almost-positive} if it admits an almost-positive diagram. As an application of Theorem \ref{thm:main3}, using the computations of the Seifert genus of almost-positive links due to Stoimenov (\cite{Stoimenov11}), we are able to recover a result due to Tagami (\cite{Tagami14}), in the case of knots, and Abe-Tagami (\cite{AbeTagami17}), general case.

\begin{corollary}[\cite{AbeTagami17,Tagami14}]\label{cor:AbeTagami}
Let $L$ be an almost-positive non-split link. Then, we have that
\[ s(L,\mathbb{F}) = 2g_{4}(L) + \ell(L) - 1 = 2g_{3}(L) + \ell(L) - 1,\]
where $\ell$ denotes the number of components, $g_3$ denotes the three-dimensional (Seifert) genus, and $g_{4}$ denotes the weak slice genus. Moreover, they can be computed directly from an almost-positive diagram $D$, and we have
\[ s(L,\mathbb{F}) = w(D) - V(D) + 2 \delta_{-}(D) + 1.\]
\end{corollary}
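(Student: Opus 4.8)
The plan is to squeeze $s(L;\mathbb{F})$ between the combinatorial lower bound of Theorem~\ref{thm:main3} applied to an almost-positive diagram and the topological upper bound coming from the slice genus, and then to feed in Stoimenow's computation of $g_{3}$ for almost-positive links to see that the two ends of the squeeze coincide. For the upper side, recall that the RBW invariant satisfies $s(L;\mathbb{F}) \le 2 g_{4}(L) + \ell(L) - 1$ (\cite{BeliakovaWehrli08}); since a Seifert surface pushed into $B^{4}$ bounds $L$, one has $g_{4}(L) \le g_{3}(L)$, and hence $s(L;\mathbb{F}) \le 2 g_{3}(L) + \ell(L) - 1$.

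For the lower side, fix a reduced almost-positive diagram $D$ of $L$; because $L$ is non-split, $\Gamma(D)$ is connected. The first task is to kill the $V_{-}$ and $\ell_{-}$ terms in Theorem~\ref{thm:main3}: I claim $\Gamma(D)$ has no negative vertex. Indeed, the unique negative crossing of $D$ contributes at most one negative edge to $\Gamma(D)$, so a negative vertex would be a Seifert circle of $D$ meeting the rest of the diagram only across that single crossing; using that Seifert circles are pairwise disjoint embedded curves and that $\Gamma(D)$ is connected, this would force the negative crossing to be nugatory, contradicting the reducedness of $D$. Therefore $V_{-}(D) = \ell_{-}(D) = 0$, and Theorem~\ref{thm:main3} specialises to $s(L;\mathbb{F}) \ge w(D) - V(D) + 2\delta_{-}(D) + 1$.

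It remains to identify this right-hand side with $2 g_{3}(L) + \ell(L) - 1$. As $D$ has exactly one negative crossing, $c(D) = w(D) + 2$; as the vertices of $\Gamma(D)$ are the Seifert circles, $s(D) = V(D)$; and the canonical Seifert surface of $D$ has genus $g(D)$ with $2 g(D) + \ell(L) - 1 = c(D) - s(D) + 1 = w(D) - V(D) + 3$. Now I invoke Stoimenow's theorem (\cite{Stoimenov11}): for an almost-positive non-split link, $g_{3}(L)$ equals $g(D)$ when the negative crossing cannot be absorbed by a genus-reducing modification and equals $g(D) - 1$ otherwise, the first case being exactly the condition $\delta_{-}(D) = 1$ (for a reduced diagram the negative edge, if present, automatically joins two neutral vertices, and it is present precisely when the negative crossing is not a self-crossing of a Seifert circle). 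In either case $2 g_{3}(L) + \ell(L) - 1 = w(D) - V(D) + 1 + 2\delta_{-}(D)$, so the lower bound reads $s(L;\mathbb{F}) \ge 2 g_{3}(L) + \ell(L) - 1$. Combined with the upper bound this forces
\[ 2 g_{3}(L) + \ell(L) - 1 \le s(L;\mathbb{F}) \le 2 g_{4}(L) + \ell(L) - 1 \le 2 g_{3}(L) + \ell(L) - 1, \]
so all three quantities agree; in particular $g_{4}(L) = g_{3}(L)$, and the displayed diagrammatic formula is the middle identity rewritten via the equalities of this step. For a non-reduced almost-positive diagram of $L$, removing nugatory crossings changes neither $w(D) - V(D)$ nor $\delta_{-}(D)$, so the formula is unaffected; and if the negative crossing is itself nugatory then $L$ is positive and one lands in the classical positive case, which is also the sharpness statement of Theorem~\ref{thm:main3} on positive diagrams.

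The step I expect to be the real obstacle is this last identification — transcribing Stoimenow's genus computation into exactly the shape $2 g_{3}(L) + \ell(L) - 1 = w(D) - V(D) + 1 + 2\delta_{-}(D)$, i.e. matching the hypotheses under which his formula holds with ``reduced almost-positive non-split diagram'' and matching his $g(D)$-or-$g(D)-1$ dichotomy with the combinatorial alternative $\delta_{-}(D)\in\{0,1\}$. The supporting claim that a negative vertex forces the negative crossing to be nugatory also deserves a short but careful planar argument.
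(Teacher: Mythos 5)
Your proposal follows essentially the same route as the paper: squeeze $s(L;\mathbb{F})$ between the lower bound of Theorem~\ref{thm:main3} and the topological upper bound coming from the genus, then use Stoimenow's formula for $g_3(L)$ to identify the two ends via $\delta_-(D)$. The paper works with Euler characteristics rather than genera, a cosmetic difference; and the step you flag as the ``real obstacle'' --- verifying $V_-(D)=\ell_-(D)=0$ and matching Stoimenow's cases (1)/(2) with $\delta_-(D)=1/0$ --- is precisely what the paper states without spelling out the planar argument, so your instinct about where the work lives is sound, and your reducedness-based justification (a negative vertex would be a Seifert circle touched only by the single negative crossing, forcing it to be nugatory) is the right way to fill it in. One caveat on your closing paragraph: if the negative crossing of an unreduced almost-positive diagram $D$ is itself nugatory, then the displayed formula of the Corollary actually fails for that $D$ (untwisting the kink changes $w(D)-V(D)$ by $+2$ while $\delta_-(D)$ stays $0$), so the diagrammatic identity should be read as a statement about reduced almost-positive diagrams; for such an unreduced $D$ what remains sharp is the full bound of Theorem~\ref{thm:main3} with the $V_-$ and $\ell_-$ terms kept, not the specialised formula.
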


In particular, all almost-positive non-split diagrams are such that \eqref{combinatorialboundons} is sharp, while the lower bound in \eqref{KawCavbound} is never sharp for all almost-positive non-split diagrams such that $\delta_{-} \ne 0$. This immediately implies the following result

\begin{theorem}\label{thm:independence}
The bounds \eqref{combinatorialboundons} and the lower bounds in \eqref{KawCavbound} and \eqref{Lobbbound} are independent. More precisely there are diagrams for which one bound is sharper than the other.\qed
\end{theorem}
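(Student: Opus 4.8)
The plan is to argue independence pairwise: for each relevant pair of bounds I will exhibit one oriented link diagram on which the first gives a strictly larger lower bound for $s(L;\mathbb{F})$ than the second, and another diagram on which the reverse strict inequality holds. The key leverage is that all the bounds in play estimate the \emph{same} integer $s(L;\mathbb{F})$ from below, so whenever one of them is known to be an equality on a given diagram, no competing bound can exceed it there; this is the mechanism I would use to get strictness cheaply.

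\textbf{Step 1: \eqref{combinatorialboundons} beats the others on almost-positive diagrams.} By Corollary~\ref{cor:AbeTagami}, on every almost-positive non-split diagram $D$ the bound \eqref{combinatorialboundons} is sharp, i.e.\ it equals $s(L;\mathbb{F})$. As recorded in the remark preceding the statement, the lower bound in \eqref{KawCavbound} is never sharp on such a diagram when $\delta_{-}(D)\neq 0$; hence any almost-positive non-split diagram with a negative edge joining two neutral vertices of $\Gamma(D)$ — such diagrams exist, by the discussion around Corollary~\ref{cor:AbeTagami} — is a witness that \eqref{combinatorialboundons} is strictly sharper than \eqref{KawCavbound} there. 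For \eqref{Lobbbound} I would check on (at least one) such diagram that its right-hand side is strictly below $s(L;\mathbb{F})$, by unwinding the definition of \eqref{Lobbbound} on that explicit diagram (counting Seifert circles and the relevant combinatorial data); this gives the witness for the pair \eqref{combinatorialboundons} versus \eqref{Lobbbound}.

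\textbf{Step 2: the reverse witnesses.} To find a diagram on which \eqref{KawCavbound} (resp.\ \eqref{Lobbbound}) strictly beats \eqref{combinatorialboundons}, I would search among diagrams that are neither positive nor negative and are engineered so that $V_{-}(D)-\ell_{-}(D)+\delta_{-}(D)$ is small — few negative vertices in $\Gamma(D)$, many mixed or neutral ones — while the connected-component counts feeding \eqref{KawCavbound} and \eqref{Lobbbound} remain comparatively large. A short list of small near-alternating links drawn from the knot tables, for which $s(L;\mathbb{F})$ is already tabulated, should contain suitable examples; on each one simply compares the three right-hand sides by direct computation. The same table search also settles, if one wishes, the mutual independence of \eqref{KawCavbound} and \eqref{Lobbbound}.

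\textbf{Main obstacle.} The difficulty is bookkeeping rather than ideas. One must fix the explicit diagrams, correctly unwind all three bounds on them — identifying the Seifert circles, the edge and vertex types of $\Gamma(D)$, and the various connected-component counts — and either compute or quote $s(L;\mathbb{F})$ for the underlying links. The one point that requires care is ensuring each example is genuinely two-sided, i.e.\ that one is not merely seeing equality of two bounds rather than a strict comparison; for the almost-positive examples strictness is automatic from Corollary~\ref{cor:AbeTagami} together with the cited non-sharpness of \eqref{KawCavbound}, whereas for the reverse direction strictness must be read off the explicit numerical comparison.
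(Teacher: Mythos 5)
Your overall strategy matches the paper's: for one direction, use the sharpness of \eqref{combinatorialboundons} on almost-positive non-split diagrams together with the non-sharpness of \eqref{KawCavbound} on those satisfying case~\eqref{Stoimenov:case1} of Theorem~\ref{thm:Stoimenov} (equivalently, those with $\delta_-\neq 0$), and for the other direction produce a diagram on which \eqref{KawCavbound} strictly exceeds \eqref{combinatorialboundons}. The gap is in your Step~2: you propose to search knot tables for suitable near-alternating witnesses, but you do not actually produce any, and there is no need to search. The paper has already constructed, earlier in the same subsection, the unknot diagrams $U(k,h)$ (Figure~\ref{fig:interesting_unknot}, Table~\ref{tab:interesting_unknot}) and the homogeneous diagrams $K_h$ (Figure~\ref{fig:homogeneousdiagram}), on which \eqref{KawCavbound} evaluates to the constant $-2$ while \eqref{combinatorialboundons} evaluates to $-2(k+h-1)$, strictly smaller once $k+h>0$; the proof of Theorem~\ref{thm:independence} just cites those families. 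You also overcomplicate the bookkeeping: since the paper records that \eqref{KawCavbound} always dominates the lower bound in \eqref{Lobbbound} (and the two coincide on knot diagrams), it suffices to compare \eqref{combinatorialboundons} against \eqref{KawCavbound} in both directions; your separate verification for \eqref{Lobbbound} in Step~1 is redundant, as is the full pairwise enumeration. Fill in the $U(k,h)$ example, drop the extra passes, and your plan collapses to the paper's argument.
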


Both Corollary \ref{cor:AbeTagami} and Theorem \ref{thm:independence} will be proved in Subsection \ref{sec:examples}.

\begin{rem*}
There is a larger class of diagrams including the almost-positive ones for which \eqref{combinatorialboundons} is sharp, while \eqref{KawCavbound} is not.  For instance, the reader may consult \cite[Chapter 4, Subsection 2.5]{Thesis1}. However, for the family in \cite{Thesis1} and for almost-positive links the bound \eqref{combinatorialboundons} can be re-proved (almost trivially) from Tagami and Abe-Tagami results. In Subsection \ref{sec:examples}, we will give yet another family of links for which \eqref{combinatorialboundons} is sharper than the lower bounds in \eqref{KawCavbound} and \eqref{Lobbbound}, and for which our result does not follow trivially from Tagami's result.
\end{rem*}

\subsection*{Acknowledgments}
The author wish to thank Prof. Paolo Lisca for his advice, the helpful conversations and his continuous support. Many thanks are due to Daniele Celoria, Marco Moraschini, and Stefano Riolo for their comments on the drafts of this paper. The author also wish to thank an anonymous referee for pointing out the reuslts of Tagami and Abe-Tagami, and the other referees for the helpful remarks. The present paper is an excerpt from the author's Ph.D. thesis. During his Ph.D. the author was supported by a Ph.D. scholarship ``Firenze-Perugia-Indam''. The author is currently supported by an Indam grant for a research period outside Italy, and is currently hosted by the IMT.

\section{Deformations of Khovanov homology and concordance invariants}
In this section we will briefly review some well-known material concerning Khovanov homology and its deformations.
\subsection{Frobenius Algebras}
Let $R$ be a (commutative unital) ring. A \emph{Frobenius algebra} (FA) over $R$ is a commutative unital $R$-algebra $A$, which is also a projective $R$-module, together with two $R$-linear maps
\[\Delta: A \longrightarrow A \otimes_{R} A\qquad\text{and}\qquad \epsilon :A \longrightarrow R,\]
called \emph{co-multiplication} and \emph{co-unit}, respectively, satisfying the following properties: (1) $\Delta$ is co-commutative and co-associative, (2) $\Delta$ commutes with the left and right action of $A$, and (3) $\epsilon$ is the co-unit relative to $\Delta$ (that is, $\epsilon \otimes Id_{A}\circ\Delta   =  Id_{A} \otimes\epsilon\circ\Delta = Id_{A} $). We give some examples of FA below. These examples will be needed during the rest of the paper.

\begin{example}[Khovanov FA]
Let $R$ be a ring. Then, the \emph{Khovanov Frobenius Algebra} is the $R$-algebra $A_{Kh} = R[X]/(X^2)$ endowed with the co-multiplication
\[\Delta_{Kh}(1) = X \otimes 1 + 1 \otimes X\qquad \Delta_{Kh}(X) = X \otimes X,\]
and co-unit
\[\epsilon_{Kh}(1) = 0\qquad \epsilon_{Kh}(X)=1. \]
\end{example}

\begin{example}[Twisted Lee FA]
Let $R$ be a ring. Then, the \emph{twisted Lee Frobenius Algebra} is the $R$-algebra $A_{TLee} = R[X]/(X^2 - X)$ endowed with the co-multiplication
\[\Delta_{TLee}(1) = X \otimes 1 + 1 \otimes X - 1 \otimes 1 \qquad \Delta_{TLee}(X) = X \otimes X,\]
and co-unit
\[\epsilon_{TLee}(1) = 0\qquad \epsilon_{TLee}(X)=1. \]
\end{example}

\begin{example}[Bar-Natan FA]
Let $R$ be a ring and $U$ a formal variable. Then, the \emph{Bar-Natan Frobenius Algebra} is the $R_{BN}$($=R[U]$)-algebra $A_{BN} = (R[U])[X]/(X^2 - UX)$ endowed with the co-multiplication
\[\Delta_{BN}(1) = X \otimes 1 + 1 \otimes X - U \cdot 1 \otimes 1 \qquad \Delta_{BN}(X) = X \otimes X,\]
and co-unit
\[\epsilon_{BN}(1) = 0\qquad \epsilon_{BN}(X)=1. \]
\end{example}

Certain Frobenius algebras may be endowed with further structure. A Frobenius algebra is called \emph{graded} (resp. \emph{filtered}) if $R$ is a graded ring, $A$ is a graded (resp. filtered) $R$-module, and the multiplication, co-multiplication, unit and co-unit are graded (resp. filtered) maps. Where the grading (resp. filtration) on $A\otimes_{R} A$ is the usual tensor grading (resp. filtration).

By taking the ring $R$ to be trivially graded, and by setting
\[deg(X) = -2\qquad deg(1) = 0\qquad deg(U) = -2,\]
Khovanov FA and Bar-Natan FA become graded FA's, while twisted Lee theory becomes a filtered FA. The filtration in twisted Lee theory is obtained by setting
\[\mathscr{F}_{-3} = (0),\quad\mathscr{F}_{-2} = \mathscr{F}_{-1} =\langle  X \rangle_{R} \subseteq A_{TLee}\quad \text{and} \quad \mathscr{F}_{0} = A_{TLee}.\]
In this way $1$ and $X$ become a filtered basis for $A_{TLee}$.

\begin{rem*}\label{gradingsFA}
Notice that Khovanov and twisted Lee FA's can be seen as quotients of Bar-Natan FA. Moreover, since $A_{Kh}$ is obtained as the quotient of $A_{BN}$ by an homogeneous ideal, it inherits a grading. This grading is the same as the grading on $A_{Kh}$ we defined. Similarly, one can see $A_{TLee}$ as a quotient of $A_{BN}$. However, this time is the quotient by the ideal generated by $(U-1)$. Since, this ideal is not homogeneous, twisted Lee theory does not inherit a grading. However, the quotient has a natural filtration (cf. \cite[Appendix A, Section 2]{Thesis1}), which coincides with the filtration on twisted Lee FA defined above.
\end{rem*}

\subsection{From Frobenius algebras to link homology}

Let $D$ be an oriented link diagram. A \emph{local resolution} of a crossing \ocrosstext in $D$ is its replacement with either a $0$-resolution \isplittext or with a $1$-resolution \osplittext .

\begin{definition}
A \emph{resolution of} $D$ is the set of circles, embedded in $\mathbb{R}^{2}$, obtained from $D$ by performing a local resolution at each crossing. The total number of $1$-resolutions performed in order to obtain a resolution $\underline{s}$ will be denoted by $\vert \underline{s}\vert$.
\end{definition}

A circle $\gamma$ in a resolution \emph{touches a crossing} $c$ if changing the local resolution at $c$ either splits $\gamma$ or merges it with another circle. Finally, two (possibly equal) circles are \emph{joined by}, or  \emph{share}, a crossing $c$ if they are the only circles touching $c$.

\begin{rem*}
A crossing touches at most two circles.
\end{rem*}

Let $\mathcal{R}_D$ be the set of all the possible resolutions of $D$. Given a graded (resp. filtered) Frobenius algebra, say $\mathcal{F} = (R,A,\iota, \Delta,\epsilon)$, define
\[ C_{\mathcal{F}}^{i}(D,R) = \bigoplus_{\scriptsize{\begin{array}{c} \underline{r}\in \mathcal{R}_{D}\\
\vert \underline{r} \vert - n_{-} = i
\end{array}}} A_{\underline{r}}\qquad \text{and}\qquad A_{\underline{r}} = \bigotimes_{\gamma\in \underline{r}} A_{\gamma}(1), \]
where $A_{\gamma}$ is just an indexed copy of $A$, $n_{-}$ denotes the number of negative crossings in $D$, and $(\cdot)$ denotes the degree (resp. filtration) shift. (The shift can be safely ignored until the next subsection.) These are the $R$-modules that are going to play the role of (co)chain groups. So, in order to define a (co)chain complex, all that is left to do is to define a differential. This is done in two steps: first, one defines the maps
\[ d_{\underline{r}}^{\underline{s}}: A_{\underline{r}} \to A_{\underline{s}},\quad \forall\ \underline{r},\: \underline{s}\in \mathcal{R}_{D},\]
and $\underline{r}$ differs from $\underline{s}$ only for the replacement of a $0$-resolution with a $1$-resolution.

Consider $x= \bigotimes_{\gamma\in \underline{r}} \alpha_{\gamma} \in A_{\underline{r}}$. Then taken a resolution $\underline{s}$ as above, there is an identification of all the circles of $\underline{r}$ and $\underline{s}$, except the ones involved in the change of local resolution. There are only two cases to consider: (a) two circles of $\underline{r}$, say $\gamma_{1},\ \gamma_{2}$ are merged in a single circle $\gamma^\prime_{1}$ in $\underline{s}$, or (b) a circle $\gamma_{1}$ belonging to $\underline{r}$ is split in into two circles, say $\gamma_{1}^\prime$ and $\gamma_{2}^\prime$, in $\underline{s}$. Our map is defined as follows
\[ d_{\underline{r}}^{\underline{s}} (x) = \begin{cases}\bigotimes_{\gamma\in \underline{r}\cap \underline{s}} \alpha_{\gamma} \otimes m(\alpha_{\gamma_{1}},\alpha_{\gamma_{2}}) & \text{in case (a)}\\
\bigotimes_{\gamma\in \underline{r}\cap \underline{s}} \alpha_{\gamma} \otimes \Delta(\alpha_{\gamma_{1}}) & \text{in case (b)} \\\end{cases}\]
Notice that $d_{\underline{r}}^{\underline{s}}$ is well defined because of the commutativity of $m$, and of the co-commutativity of $\Delta$. 

Then, one defines
\[ d_{\mathcal{F}}^{i}: C^{i}_{\mathcal{F}}(D,R) \to C^{i+1}_{\mathcal{F}}(D,R): x\in A_{\underline{r}} \mapsto \sum_{\underline{r}\prec\underline{s}} S(\underline{r},\underline{s})d_{\underline{r}}^{\underline{s}}(x).\]
where $S(\underline{r},\underline{s})$ is an appropriate signage such that $d_{\mathcal{F}}^{2} = 0$.

This construction depends on a number of choices (and on the existence of a sign function $S$). The following theorem ensures that everything is well defined, and that the homology of $C_{\mathcal{F}}^{\bullet}(D,R)$ does indeed exist.

\begin{theorem}[Khovanov \cite{Khovanov00}, Lee \cite{Lee05}, Bar-Natan \cite{BarNatan05cob}]
Let $\mathcal{F}$ be one among $Kh$, $TLee$ and $BN$. There exists a sign function $S$ such that the complex $(C_{\mathcal{F}}^{\bullet}(D,R),d^{\bullet}_{\mathcal{F}})$ is a (co)chain complex. This complex does not depend, up to isomorphism, on the choice of the sign function $S$, or on the order of the circles in each resolution. 
\qed
\end{theorem}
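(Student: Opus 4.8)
\emph{Proof proposal.} The plan is to exhibit $C_{\mathcal F}^{\bullet}(D,R)$ as the total complex of an $n$-dimensional cube of $R$-modules, where $n$ is the number of crossings of $D$. Number the crossings $1,\dots,n$ and identify $\mathcal R_D$ with $\{0,1\}^{n}$, so that each edge of the cube, from $\underline r$ to $\underline s$ with $\underline r\prec\underline s$, corresponds to changing the $i$-th coordinate from $0$ to $1$ and carries the map $d_{\underline r}^{\underline s}$ defined above; the homological grading $|\underline r|-n_-$ makes every edge raise degree by one. To obtain a differential it suffices to produce a sign function $S$ for which every two-dimensional face of the cube anticommutes, since then $d_{\mathcal F}^{i+1}\circ d_{\mathcal F}^{i}=0$ is immediate: the composition decomposes as a sum over the two-dimensional faces through each resolution, and each summand vanishes.

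First I would check that every face \emph{commutes} when all signs are taken to be $+1$. A face is determined by a pair of crossings $i,j$; fixing the resolutions of the remaining $n-2$ crossings, exactly one of the following occurs, according to how the circles touching $i$ and $j$ interact: (a) the two resolution changes act on pairwise distinct circles (four, three, or two of them) by independent merges and splits, in which case commutativity follows from the fact that $m$ and $\Delta$ applied to different tensor factors commute, together with (co)associativity and (co)commutativity of $m$ and $\Delta$; or (b) the two changes share a circle in the pattern producing $(m\otimes\mathrm{id})\circ(\mathrm{id}\otimes\Delta)$ along one route and $\Delta\circ m$ along the other, in which case commutativity is exactly the Frobenius (``neck-cutting'') relation, which one verifies by hand on the basis $\{1,X\}$ for $A_{Kh}$, $A_{TLee}$ and $A_{BN}$ (the presence of the variable $U$ in the last case changes nothing). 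This case analysis, and in particular keeping track of the circle identifications in case (b), is the heart of the argument and the step I expect to require the most care.

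Next I would fix the standard sign function $S(\underline r,\underline s)=(-1)^{\sum_{k<i}r_{k}}$, where $i$ is the coordinate changed along the edge $\underline r\prec\underline s$, and observe that around a face determined by coordinates $i<j$ the two routes accumulate signs differing by exactly $-1$; combined with the commutativity above, every face anticommutes, so $d_{\mathcal F}^{2}=0$. For independence of the choice of $S$: if $S,S'$ both make the cube anticommute then their ratio, as a $\{\pm1\}$-valued function on edges, makes every face commute, i.e.\ is a $1$-cocycle on the cube; since the cube is contractible as a CW complex, this cocycle is a coboundary, $S/S'$ evaluated on $\underline r\to\underline s$ equals $\varepsilon_{\underline r}\varepsilon_{\underline s}$ for a function $\varepsilon\colon\{0,1\}^n\to\{\pm1\}$ defined inductively along paths from $\underline 0$, and then rescaling the generators of each $A_{\underline r}$ by $\varepsilon_{\underline r}$ gives an isomorphism between the two complexes. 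Finally, reordering the circles of a resolution $\underline r$ induces the canonical symmetry isomorphism of the tensor product $A_{\underline r}$; since $m$ is commutative and $\Delta$ is cocommutative, these isomorphisms commute with every $d_{\underline r}^{\underline s}$ and hence assemble into an isomorphism of complexes, which proves the last assertion.
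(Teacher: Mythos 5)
The paper does not prove this theorem; it is stated with a \qed because it is a foundational result of Khovanov, Lee, and Bar-Natan, cited from the literature, so there is no in-paper argument to compare against. Your sketch is, however, a correct outline of the standard proof appearing in those references: total complex of an $n$-cube, unsigned faces commute by (co)associativity, (co)commutativity, and the Frobenius relation, the sign $S(\underline r,\underline s)=(-1)^{\sum_{k<i}r_k}$ turns commutativity into anticommutativity of each square, independence of $S$ follows since any two admissible sign assignments differ by a $1$-coboundary on the contractible cube, and reorderings of circles give symmetry isomorphisms of the tensor factors that intertwine the edge maps. The one place to be more careful (and you flag it yourself) is the case analysis of two-dimensional faces: your grouping into ``pairwise distinct circles'' versus ``shared circle producing the Frobenius square'' is slightly mislabeled, since faces where the two crossings share exactly one circle and both changes merge (or both split) are not independent and are precisely the (co)associativity faces, and there are also faces where both crossings join the same pair of circles; these should be enumerated explicitly to make the commutativity claim airtight, but none of them poses an obstruction for $A_{Kh}$, $A_{TLee}$, or $A_{BN}$.
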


\subsection{Gradings and filtrations}\label{Subs:GradingC_F}

Since $Kh$ and $BN$ are graded FA and $TLee$ is a filtered FA, it is possible to endow $(C_{\mathcal{F}}^{\bullet}(D,R),d^{\bullet}_{\mathcal{F}})$ with further structure; more precisely, if $\mathcal{F}$ is $Kh$ or $BN$, then we can define a second grading, and if $\mathcal{F}$ is $TLee$ then we can define a filtration.

Before proceeding any further let us fix some notation. The elements of $C^{i}_{\mathcal{F}}(D,R)$ of the form $\bigotimes_{\gamma\in\underline{r}} \alpha_{\gamma}$, with $\alpha_{\gamma} \in \{ 1, X\}$ and $\underline{r}\in \mathcal{R}_D$, will be called \emph{states}; while those of the form $\bigotimes_{\gamma\in\underline{r}} \alpha_{\gamma}$, with $\alpha_\gamma \in A$, will be called \emph{enhanced states}. Notice that the states are an $R$-basis of $C^{\bullet}_{\mathcal{F}}(D,R)$, while the enhanced states are a system of generators.

Let $\mathcal{F}$ be either $Kh$ or $BN$. The \emph{quantum grading} over the complex $(C^{\bullet}_{\mathcal{F}}(D,R),d^{\bullet}_{\mathcal{F}})$, is defined as follows
\[ qdeg(\bigotimes_{\gamma\in \underline{r}} \alpha_{\gamma}) = \sum_{\gamma\in\underline{r}} deg_{A}(\alpha_{\gamma}) - 2 n_{-} + n_{+}+  \vert \underline{r} \vert,\]
for each state $\bigotimes_{\gamma\in\underline{r}} \alpha_{\gamma}$.

\begin{rem*}
For a non homogeneous element $x$ we set $qdeg(x)$ to be the \emph{minimum} degree among its homogeneous components.
\end{rem*}

Now consider the Frobenius algebra $TLee$. The \emph{quantum filtration} over the complex $(C^{\bullet}_{\mathcal{F}}(D,R),d^{\bullet}_{\mathcal{F}})$, is defined on each homological grading as the (direct sum of the) tensor filtration shifted by $- 2 n_{-} + n_{+}$. We will denote by $\mathscr{F}_{\circ}$ the filtration induced on the complex $C_{TLee}^\bullet(D,R)$. Whenever it is necessary to specify the dependence on the diagram or on the base ring we will write $\mathscr{F}_{\circ}(D,R)$ instead of $\mathscr{F}_{\circ}$. Moreover, if we wish to denote the filtration restricted to a precise homological degree we will write $\mathscr{F}_{\circ}C^{i}$. 

The induced filtration in homology will be denoted by $\mathscr{F}_{\circ}H^\bullet(D,R)$, or by $\mathscr{F}_{\circ}H^\bullet$; the latter notation will be used whenever the link and the base ring are clear form the context, or unimportant for the discussion at hand.

It is easy to see that the differential $d_{\mathcal{F}}^\bullet$ is homogeneous (resp. filtered) with respect to the $qdeg$ degree (resp. induced filtration), and the resulting homology theory is hence doubly-graded (resp. filtered).

\begin{theorem}[Khovanov \cite{Khovanov00}, Rasmussen \cite{Rasmussen10}, Bar-Natan \cite{BarNatan05cob}]
If $\mathcal{F}$ is $Kh$ or $BN$ (resp. $TLee$), the homology of $(C_{\mathcal{F}}^{\bullet}(D,R),d^{\bullet}_{\mathcal{F}})$ is a link invariant up to bi-graded (resp. filtered) isomorphism of $R$-modules.\qed
\end{theorem}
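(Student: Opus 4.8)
The plan is to prove invariance in the standard way, by checking the three Reidemeister moves R1, R2, R3, and observing that for each of them one can exhibit an \emph{explicit} chain homotopy equivalence between the complexes of the two diagrams which, in addition, is homogeneous of degree zero for $qdeg$ when $\mathcal{F}$ is $Kh$ or $BN$, and is a filtered map of filtration degree zero (inducing a filtered isomorphism on homology) when $\mathcal{F}$ is $TLee$. Independence from the sign function $S$ and from the order of the circles in each resolution has already been recorded in the previous theorem, so it suffices to work with one fixed admissible choice. The engine producing these homotopy equivalences is, in all three cases, the combination of the \emph{delooping} isomorphism $A_{\gamma}\cong R(1)\oplus R(-1)$ — valid for each of $A_{Kh}$, $A_{TLee}$, $A_{BN}$ since each is free of rank two on $\{1,X\}$ — together with the Gaussian elimination lemma: a mapping cone in which one matrix entry is an isomorphism is chain homotopy equivalent to the cone with that entry and its source and target deleted, and with the remaining differential corrected by the usual zig-zag term.

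For R1, say adding a positive kink to $D$ to obtain $D'$, resolving the new crossing exhibits $C_{\mathcal{F}}^{\bullet}(D',R)$ as the mapping cone of a map $C_{\mathcal{F}}^{\bullet}(D\sqcup\bigcirc,R)\to C_{\mathcal{F}}^{\bullet}(D,R)$ built from multiplication by the class carried by the small circle. Delooping that small circle splits the source as two shifted copies of $C_{\mathcal{F}}^{\bullet}(D,R)$, and on one of them the cone map restricts to an isomorphism; cancelling it leaves a complex isomorphic to $C_{\mathcal{F}}^{\bullet}(D,R)$. A direct check shows that the homological shift by $n_{-}$ and the quantum shift $-2n_{-}+n_{+}+\vert\underline{r}\vert$ in the definition absorb exactly the change in $n_{\pm}$ and in the number of $1$-resolutions caused by the move, so the equivalence is degree-preserving; for $TLee$ one verifies in addition that $\Delta_{TLee}$, $m$, and the delooping maps are filtered of degree zero, so the same cancellation runs in the filtered category. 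The negative-kink case is dual.

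For R2, resolving the two crossings produces a four-term subquotient cube; two of the four resolutions introduce a small circle, and applying delooping followed by two Gaussian eliminations removes an acyclic direct summand, leaving a complex isomorphic, with the correct shifts, to $C_{\mathcal{F}}^{\bullet}$ of the simplified diagram. For R3 I would use the by-now standard reduction: after an R2 move one arranges the three strands so that the two sides of the R3 move share a common subcomplex, and the homotopy equivalences coming from the two R2-reductions, composed appropriately, yield the R3 equivalence; alternatively one invokes the cobordism-category formalism of Bar-Natan, in which R3 invariance is a formal consequence of R1 and R2 invariance together with the delooping relation, and which applies verbatim to all three Frobenius systems since the only relations used are consequences of the Frobenius axioms. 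Throughout, the bookkeeping of $qdeg$ (resp.\ of $\mathscr{F}_{\circ}$) is forced by the normalization and requires only that multiplication, comultiplication, unit and counit be graded (resp.\ filtered) of degree zero, which holds by construction.

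The main obstacle is not any single move but the uniform treatment of the three theories together with the extra care needed in the $TLee$ case: there the differential is merely filtered, the homology is only a filtered $R$-module, and one must check that each delooping and each cancellation is a \emph{filtered} quasi-isomorphism — i.e.\ induces an isomorphism on the associated graded as well as on homology — rather than merely a graded isomorphism. The second most delicate point is the R3 bookkeeping, where one must ensure the two R2-reductions are carried out compatibly so that the induced maps on the common subcomplex agree; this is where most of the computational effort lies, and it is the reason the cobordism-theoretic packaging of Bar-Natan is usually preferred.
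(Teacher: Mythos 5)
The paper does not prove this statement: it is recorded as a known theorem attributed to Khovanov, Rasmussen, and Bar-Natan, and closed immediately with a $\qed$. There is therefore no in-paper argument to compare against. Your sketch is a correct outline of the now-standard proof strategy, closest in spirit to the cobordism-theoretic treatment of Bar-Natan \cite{BarNatan05cob} refined by delooping and Gaussian elimination: resolve the crossings involved in a Reidemeister move, deloop the small circles, cancel invertible components, and track the homological and quantum shifts. You also flag the two genuinely delicate points — that in the $TLee$ case the resulting homotopy equivalences must be filtered of degree zero and must induce isomorphisms on the associated graded (so that, by boundedness of the filtration and the resulting spectral-sequence comparison, one gets a filtered isomorphism on homology rather than merely an isomorphism of underlying modules), and that the R3 bookkeeping requires arranging the two R2 reductions compatibly. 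One small caveat worth noting: R3 invariance is not literally a \emph{formal consequence} of R1 and R2 together with delooping; one still needs a dedicated (though streamlined) check, as in Bar-Natan's framework. But as a proof sketch faithful to the cited literature, your proposal is sound.
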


\section{The $s$-invariant}

\subsection{Definition and first properties} The Twisted Lee homology is not, by itself, a very interesting invariant; as a matter of fact, disregarding the filtration the Twisted Lee homology of an oriented link depends only on the linking matrix (i.e.\: on the number of components and on the linking numbers between them). More precisely, E. S. Lee  in \cite[Theorem 5.1]{Lee05} has shown that: fixed an oriented link diagram $D$ representing a link $L$ there is a set of cycles, called \emph{canonical generators}, whose homology classes generate $H_{TLee}^{\bullet}(L, \mathbb{F})$. This set is indexed by the possible orientations of the underlying unoriented diagram. Moreover, the homological degree of each canonical generator is completely determined by the linking matrix of $L$.

Let $D$ be an oriented link diagram, the set of the possible orientations of the underlying unoriented diagram will be denoted by $\mathbb{O}(D)$. Fix a field $\mathbb{F}$, we indicate with $\mathbf{v}_{o}(D;\mathbb{F}) \in C_{TLee}^{\bullet}(D,\mathbb{F})$ the canonical generator associated to $o\in \mathbb{O}(D)$. 

\begin{definition}[Rasmussen \cite{Rasmussen10}, Beliakova-Wehrli \cite{BeliakovaWehrli08}]
Let $D$ be an oriented link diagram representing an oriented link $L$. The \emph{Rasmussen-Beliakova-Wehrli} (\emph{RBW}) \emph{invariant} associated to $o\in \mathbb{O}(D)$ is the integer
\[ s(o,L; \mathbb{F}) = \frac{Fdeg\left([\mathbf{v}_{o}(D;\mathbb{F}) - \mathbf{v}_{-o}(D;\mathbb{F})\right]) - Fdeg\left([\mathbf{v}_{o}(D;\mathbb{F}) + \mathbf{v}_{-o}(D;\mathbb{F})]\right)}{2},\]
where $Fdeg$ indicates the filtered degree in $H_{TLee}^{\bullet}(L, \mathbb{F})$, and $-o$ denotes the opposite orientation with respect to $o$. If $o$ is exactly the orientation induced by $L$, we will omit $o$ from the notation and call $s(L;\mathbb{F})$ the \emph{$s$-invariant} of $L$.
\end{definition}

\begin{rem*}
The original definition of the RBW-invariants (\cite{Rasmussen10, BeliakovaWehrli08}) is not in terms of Twisted Lee theory but in terms of Lee theory (\cite{Lee05}). We preferred to use the twisted version of Lee theory because it allows a bit more generality; while two theories are equivalent if $char(\mathbb{F})\ne 2$ (in particular, the corresponding RBW-invariants over $\mathbb{F}$ are equal, see \cite{Mackaayturnervaz07}), in characteristic $2$ is not possible to define the RBW-invariants using the original version of Lee theory.
\end{rem*}

The most interesting properties of the RBW-invariants are related to concordance, so us recall some definitions. Let $L_0$ and $L_1$ be two oriented links in $\mathbb{R}^{3}$. A \emph{cobordism} between $L_0$ and $L_1$ is a compact oriented surface $\Sigma$, properly (and smoothly) embedded in $\mathbb{R}^{3}\times [0,1]$, such that
\[ \Sigma \cap \mathbb{R}^{3} \times \{ i \} = L_i,\qquad i\in \{ 0, 1\},\]
with the induced orientation on $L_0$ and the opposite of the induced orientation on $L_1$.

Given a cobordism $\Sigma$ between two links, say $L$ and $L^\prime$, it is useful to distinguish between its connected components; a component of $\Sigma$ is of \emph{first type} if it bounds a component of $L$, and is of \emph{second type} otherwise. Let $\widetilde{L}$ be the unoriented link underlying $L$. Two orientations $o$ and $o^\prime$, on $\widetilde{L}$ and $\widetilde{L^\prime}$, are \emph{compatible via $\Sigma$} if there exists an orientation of $\Sigma$ bounding the oriented links $(\widetilde{L},o)$ and $(\widetilde{L^\prime},-o^\prime)$. 

\begin{definition}
A cobordism $\Sigma$ is a \emph{weak cobordism} (or weakly connected in the language of \cite{Rasmussen10}) if all its components are of first type. While $\Sigma$ is a \emph{strong cobordism} if each component of $\Sigma$ bounds exactly one component of $L$ and one component of $L^\prime$.
\end{definition}

\begin{rem*}
Every strong cobordism is also a weak cobordism. For each weak cobordism between $L$ and $L^\prime$, there is a unique orientation of $\widetilde{L^\prime}$ compatible with the orientation of $L$. 
\end{rem*}

\begin{definition}
A link $L$ is \emph{strongly concordant} (resp. \emph{weakly concordant}) to a link $L^\prime$ if there exists a strong (resp. weak) cobordism of genus $0$ between $L$ and $L^\prime$. Any link which is strongly (resp. weakly) concordant to an unlink is called \emph{strongly} (resp. \emph{weakly}) \emph{slice}.
\end{definition}

The main properties enjoyed by the RBW-invariants are summarized in the following proposition.

\begin{proposition}[Rasmussen \cite{Rasmussen10}, Beliakova-Wehrli \cite{BeliakovaWehrli08}]\label{Prop:sproperties}
Let $L$ and $L_{0}$ be two oriented links in $\mathbb{R}^{3}$. Then, the following
properties hold
\begin{enumerate}
\item\label{case:boundonchi} if $\Sigma$ is a weak cobordism between $L$ and $L_{0}$, then $ \vert s(L;\mathbb{F}) - s(L_{0};\mathbb{F}) \vert \leq -\chi(\Sigma)$;
\item $s(L \sqcup L_{0}; \mathbb{F}) = s(L;\mathbb{F}) + s(L_{0};\mathbb{F}) - 1$;
\item if $L$ and $L_0$ are knots, then $s(L \# L_{0}; \mathbb{F}) = s(L;\mathbb{F}) + s(L_{0};\mathbb{F})$;
\item $ 2 - 2 \ell \leq s(L;\mathbb{F}) + s(m(L);\mathbb{F}) \leq 2$;
\item $s$ detects the slice genus (i.e. the smallest genus of a strong cobordism between a link and the unlink) of all positive torus links;
\end{enumerate}
where $\#$ denotes the connected sum, $m()$ denotes the mirror image and $\ell$ is the number of components of $L$. In particular, $s$ is a strong concordance invariant.
\end{proposition}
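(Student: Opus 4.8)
The plan is to deduce all five properties from three standard ingredients: the functoriality of the twisted Lee complex under link cobordisms (Bar--Natan, in the filtered form due to Rasmussen and, in the twisted setting, to Beliakova--Wehrli), the behaviour of the twisted Lee complex under disjoint union and connected sum, and the explicit shape of the canonical generators in Lee's eigenbasis. Since $X^{2}=X$ in $A_{TLee}$, the elements $\mathfrak{a}:=1-X$ and $\mathfrak{b}:=X$ are orthogonal idempotents with $\mathfrak{a}+\mathfrak{b}=1$, homogeneous with respect to the quantum filtration of filtered degrees $Fdeg(\mathfrak{a})=0$ and $Fdeg(\mathfrak{b})=-2$. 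By Lee's theorem, each $\mathbf{v}_{o}(D;\mathbb{F})$ is, in this basis, the tensor over the circles of the oriented resolution with each circle labelled $\mathfrak{a}$ or $\mathfrak{b}$ by a rule determined by $o$; hence $\mathbf{v}_{-o}$ is obtained from $\mathbf{v}_{o}$ by switching all labels, and each $\mathbf{v}_{o}$ is $Fdeg$-homogeneous. A direct computation gives $m(\mathfrak{a}\otimes\mathfrak{a})=\mathfrak{a}$, $m(\mathfrak{b}\otimes\mathfrak{b})=\mathfrak{b}$, $m(\mathfrak{a}\otimes\mathfrak{b})=0$, $\Delta(\mathfrak{a})=-\mathfrak{a}\otimes\mathfrak{a}$, $\Delta(\mathfrak{b})=\mathfrak{b}\otimes\mathfrak{b}$, $\epsilon(\mathfrak{a})=-1$, $\epsilon(\mathfrak{b})=1$, while the birth map sends $1=\mathfrak{a}+\mathfrak{b}$; in particular a handle attachment (a $\Delta$ followed by an $m$) and a death act on an $\mathfrak{a}/\mathfrak{b}$-monomial as multiplication by a unit of $\mathbb{F}$.

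For property~(1), present $\Sigma$ as a movie and decompose it into Reidemeister moves, births, deaths, saddles and handle attachments; by functoriality this produces a map $\phi_{\Sigma}\colon H^{\bullet}_{TLee}(L;\mathbb{F})\to H^{\bullet}_{TLee}(L_{0};\mathbb{F})$, well defined up to a unit of $\mathbb{F}$, which is filtered of degree $\chi(\Sigma)$, i.e.\ $Fdeg(\phi_{\Sigma}(x))\ge Fdeg(x)+\chi(\Sigma)$ for all $x$. The core step is to use the weakness of $\Sigma$ --- so that there is a unique orientation $o_{0}$ of $\widetilde{L_{0}}$ compatible with the orientation $o$ of $L$ --- to show that $\phi_{\Sigma}([\mathbf{v}_{o}\pm\mathbf{v}_{-o}])=\lambda^{\pm}\,[\mathbf{v}_{o_{0}}\pm\mathbf{v}_{-o_{0}}]$ modulo terms of strictly higher filtration, with $\lambda^{\pm}\in\mathbb{F}^{\times}$: Reidemeister moves intertwine canonical generators, handle attachments and deaths rescale $\mathfrak{a}/\mathfrak{b}$-monomials by units, saddles joining coherently oriented circles merge $\mathfrak{a}\otimes\mathfrak{a}\mapsto\mathfrak{a}$ and $\mathfrak{b}\otimes\mathfrak{b}\mapsto\mathfrak{b}$ (the summand $\mathfrak{a}\otimes\mathfrak{b}$, absent from a coherently oriented canonical generator, is annihilated), and a birth together with the later re-absorption of the new circle selects the compatible label. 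Consequently $Fdeg([\mathbf{v}_{o_{0}}\pm\mathbf{v}_{-o_{0}}])\ge Fdeg([\mathbf{v}_{o}\pm\mathbf{v}_{-o}])+\chi(\Sigma)$; running the same argument for the reversed cobordism $\overline{\Sigma}$, which satisfies $\chi(\overline{\Sigma})=\chi(\Sigma)$, yields the four opposite inequalities, and subtracting these in pairs and applying the definition of $s$ gives $|s(L;\mathbb{F})-s(L_{0};\mathbb{F})|\le-\chi(\Sigma)$. A genus-$0$ strong cobordism has $\chi(\Sigma)=0$, so $s$ is a strong concordance invariant.

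For properties~(2) and (3), note first that resolutions of $D\sqcup D_{0}$ are pairs of resolutions and that the overall shift $-2n_{-}+n_{+}$ is additive, so $C^{\bullet}_{TLee}(D\sqcup D_{0};\mathbb{F})\cong C^{\bullet}_{TLee}(D;\mathbb{F})\otimes_{\mathbb{F}}C^{\bullet}_{TLee}(D_{0};\mathbb{F})$ as filtered complexes; hence $H^{\bullet}_{TLee}(L\sqcup L_{0};\mathbb{F})$ is the filtered tensor product and $\mathbf{v}_{o\sqcup o_{0}}=\mathbf{v}_{o}\otimes\mathbf{v}_{o_{0}}$. Expanding $\mathbf{v}_{o}\otimes\mathbf{v}_{o_{0}}\pm\mathbf{v}_{-o}\otimes\mathbf{v}_{-o_{0}}$ in terms of the classes $\mathbf{v}_{o}\pm\mathbf{v}_{-o}$ and $\mathbf{v}_{o_{0}}\pm\mathbf{v}_{-o_{0}}$ and reading off filtered degrees gives $s(L\sqcup L_{0};\mathbb{F})=s(L;\mathbb{F})+s(L_{0};\mathbb{F})-1$, the $-1$ being the discrepancy between adding filtered degrees and adding the $s$-invariants (which carry a $+1$ normalisation). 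For the connected sum of two knots the same scheme applies with $\otimes_{\mathbb{F}}$ replaced by $\otimes_{A_{TLee}}$, one circle of each diagram being merged; as $A_{TLee}\cong\mathbb{F}\times\mathbb{F}$ is semisimple there is no higher Tor, so $H^{\bullet}_{TLee}(K_{1}\#K_{2};\mathbb{F})\cong H^{\bullet}_{TLee}(K_{1};\mathbb{F})\otimes_{A_{TLee}}H^{\bullet}_{TLee}(K_{2};\mathbb{F})$, and tracking the idempotent labels of the canonical generators across the merge --- using the pair-of-pants cobordism from $K_{1}\sqcup K_{2}$ to $K_{1}\#K_{2}$, of Euler characteristic $-1$, together with (1) and (2) to fix the additive constant --- gives $s(K_{1}\#K_{2};\mathbb{F})=s(K_{1};\mathbb{F})+s(K_{2};\mathbb{F})$.

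Property~(4) follows from the filtered duality between the twisted Lee complexes of $L$ and of $m(L)$ --- $C^{\bullet}_{TLee}(m(D);\mathbb{F})$ being essentially the filtered dual of $C^{\bullet}_{TLee}(D;\mathbb{F})$ --- which yields a non-degenerate pairing on twisted Lee homology under which the canonical generators of $L$ and $m(L)$ pair up; comparing the filtered degrees of paired classes over the $2^{\ell}$ orientations gives $2-2\ell\le s(L;\mathbb{F})+s(m(L);\mathbb{F})\le 2$. For property~(5), a positive torus link $T(p,q)$ is the closure of a positive braid, so the inequality $s(T(p,q);\mathbb{F})\ge sl+1$ of Plamenevskaya and Shumakovitch applies, where $sl$ is the self-linking of that braid closure; computing $sl$ and the Euler characteristic of the associated Bennequin surface gives $sl+1=2g_{3}(T(p,q))+\ell-1$, while applying (1) to the connected cobordism from the $\ell$-component unlink to $T(p,q)$ obtained by deleting $\ell$ disks from a minimal-genus connected surface bounded by $T(p,q)$ in $B^{4}$ gives $s(T(p,q);\mathbb{F})\le 2g_{4}(T(p,q))+\ell-1\le 2g_{3}(T(p,q))+\ell-1$; since the Bennequin surface of a fibered link realises $g_{3}$, all of these quantities coincide and $s$ computes the (slice) genus of $T(p,q)$. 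The technical heart of the whole argument --- and the step I expect to be hardest --- is the claim made in property~(1) that $\phi_{\Sigma}$ carries $\mathbf{v}_{o}\pm\mathbf{v}_{-o}$ to a nonzero multiple of the corresponding combination for $L_{0}$ \emph{modulo strictly higher filtration}, i.e.\ that no unexpected cancellation lowers the filtered degree; making this precise requires a careful bookkeeping of idempotent labels through every elementary cobordism (for which the weakness of $\Sigma$ is essential) and is exactly the content of the cited work of Rasmussen and Beliakova--Wehrli.
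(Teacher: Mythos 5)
The paper does not actually give a proof of this proposition: it is stated purely as a citation of results in Rasmussen~\cite{Rasmussen10} and Beliakova--Wehrli~\cite{BeliakovaWehrli08}, with no argument supplied, and the surrounding text immediately moves on to its consequences. So there is no ``paper's own proof'' to compare against; what you have written is an independent reconstruction of the cited results.

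As a reconstruction it is largely sound. The idempotent calculus in $A_{TLee}$ is correct: $\mathfrak{a}=1-X$ and $\mathfrak{b}=X$ are orthogonal idempotents and one does get $m(\mathfrak{a}\otimes\mathfrak{a})=\mathfrak{a}$, $m(\mathfrak{a}\otimes\mathfrak{b})=0$, $\Delta(\mathfrak{a})=-\mathfrak{a}\otimes\mathfrak{a}$, $\Delta(\mathfrak{b})=\mathfrak{b}\otimes\mathfrak{b}$, $\epsilon(\mathfrak{a})=-1$, $\epsilon(\mathfrak{b})=1$, and the filtered degrees $0$ and $-2$; this is the right computational backbone, and the way you use weak connectivity of $\Sigma$ to pin down $o_0$ and then run the inequality both ways is the right strategy for~(1). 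Two caveats. First, for~(3) your cobordism/disjoint-union estimates only give $s(K_1)+s(K_2)-2\le s(K_1\#K_2)\le s(K_1)+s(K_2)$; closing that gap is exactly the content of Rasmussen's connected-sum lemma, and the standard way to finish is to combine the lower saddle estimate for $K_1\sqcup K_2\to K_1\#K_2$ with the mirror identity $s(m(K))=-s(K)$ for knots rather than to appeal to~(1) and~(2) alone. Second, as you yourself flag, the claim that $\phi_\Sigma$ sends $\mathbf{v}_o\pm\mathbf{v}_{-o}$ to a unit multiple of $\mathbf{v}_{o_0}\pm\mathbf{v}_{-o_0}$ modulo higher filtration is the true technical heart of the cited theorems and cannot be dispatched by a short bookkeeping remark; acknowledging that it rests on the cited sources is the honest and appropriate move, and it is also what the paper does by stating the proposition as a quotation.
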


It follows from the previous proposition that the $s$-invariant defines an homomorphism between the concordance group of knots (the operation being the connected sum) and the integers. Moreover, the value of the $s$-invariant provides a lower bound on the slice genus. This led, for example, to a combinatorial proof of Milnor conjecture, and to the effective computation of the slice genus for (infinite) families of knots and links.

\subsection{Bounds on the value of $s$}\label{Boundsons}

Computing the value of the $s$-invariant is not easy in general. The computation of this invariant usually involves computing spectral sequences. Under certain suitable hypotheses (e.g. $Kh$-thinness) the computations are easy, and the $s$-invariant may be read almost immediately from Khovanov homology. However, these hypotheses are do not hold in general, especially in the case of links. This is the reason why some combinatorial bounds are needed; by \emph{combinatorial bounds} we mean bounds obtained by linear combination of quantities which can be computed almost directly from the diagram.

Let $D$ be an oriented link diagram representing an $\ell$-components oriented link $L$. The number of split components of $L$ will be denoted by $\ell_{s}$.

To the best of the author's knowledge all combinatorial bounds on the value of the $s$-invariant are either weaker or equivalent to two: an upper and a lower bound due to Lobb (\cite{Lobb11})
\begin{equation*}
\tag{Lobb12}
 w(D)  + V(D) - 2s_{-}(D) + 1 \geq s(L; \mathbb{F})  \geq w(D) - V(D) + 2 s_{+}(D)  - 2 \ell(D) + 1.
 \label{Lobbbound}
\end{equation*}
and a lower bound due, independently, to Kawamura (\cite{Kawamura15}), in the case of diagrams with non-splittable connected components, and to Cavallo (\cite{Alberto15}) for $Kh$-pseudo-thin links (i.e. links whose Khovanov homology in homological degree $0$ is concentrated in two quantum degrees)
\begin{equation}
\tag{KwC15}
s(L; \mathbb{F}) \geq w(D) - V(D) + 2 s_{+}(D)  - 2 \ell_s(D) + 1. 
\label{KawCavbound}
\end{equation}

\begin{rem*}
Even if \eqref{KawCavbound} is strictly stronger than the lower bound in \eqref{Lobbbound}, some hypotheses on either $D$ or $L$ are necessary. The $Kh$-pseudo-thinness does not hold for general families of links.  Even though any link diagram can be (theoretically) reduced to a diagram with non-splittable components, given a link diagram is not easy to understand if a connected component of the diagram is splittable or not. Moreover, it is even harder to reduce the diagram into one possessing only non-splittable components. 
\end{rem*}

\section{The $\beta$- and $c$-invariants}

The $\beta$-cycles, introduced in \cite{TransFromKhType17}, are cycles in the Bar-Natan chain complex of a given oriented link diagram $D$. These can be thought as homogeneous lifts of the canonical generators in twisted Lee theory. More precisely, to an oriented link diagram $D$ are associated two elements $\beta(D)$, $\overline{\beta}(D)\in C_{BN}(D,\mathbb{F}[U])$ such that: the projection onto the quotient
\[ \pi_{TLee}: C^\bullet_{BN}(D,\mathbb{F}[U]) \longrightarrow \frac{C^\bullet_{BN}(D,\mathbb{F}[U])}{(U-1)C^\bullet_{BN}(D,\mathbb{F}[U])} = C^\bullet_{TLee}(D,\mathbb{F}),\]
sends the $\beta$-cycles to the canonical generators, that is 
\begin{equation}
\pi_{TLee}(\beta(D)) = \mathbf{v}_{o_{D}}(D),\quad \pi_{TLee}(\overline{\beta}(D)) = \mathbf{v}_{-o_{D}}(D),
\label{eq:projectionofbeta}
\end{equation}
where $o_{D}$ is the orientation of $D$.

\begin{rem*}
The map $\pi_{TLee}$ induces an isomorphism of $\mathbb{F}$-vector spaces between $C_{BN}^{i,j}(D,\mathbb{F}[U])$ and $\mathscr{F}^{j}C^{i}(D,\mathbb{F})$.
\end{rem*}

In the light of the previous remark, Equation \eqref{eq:projectionofbeta} and
\begin{equation}
qdeg(\beta(D)) = qdeg(\overline{\beta}(D)) = w(D) - V(D),
\label{quantum_deg_beta}
\end{equation}
totally characterize $\beta(D)$ and $\overline{\beta}(D)$ (cf. \cite[Lemma 6.3 \& Proposition 6.4]{TransFromKhType17}). However, for our scope it will be necessary to have an explicit definition of $\beta$ (and $\overline{\beta}$) as enhanced states.

Let $D$ be an oriented link diagram, and fix an orientation $o \in \mathbb{O}(D)$. The enhanced state $\beta(o,R)$ has underlying resolution the oriented resolution corresponding to $o$, denote it by $\underline{r}_{o}$, and each circle $\gamma$ has label
\[ b_\gamma =
\begin{cases}
x_\circ = X & \text{if }N(\gamma,o)\equiv 0\ mod\: 2\\
x_ \bullet = X - U & \text{if }N(\gamma,o)\equiv 1\ mod\: 2 \\
\end{cases}
\]
where $N(\gamma,o)$ denotes the \emph{nesting number} of $\gamma$ w.r.t. $o$, which is defined as follows. Fix an orientation on the plane. Given an orientation $o\in \mathbb{O}(D)$, each circle in $\underline{r}_{o}$ is naturally oriented. Thus, for each point $p \in \gamma$, we can consider the half-line $h_p$ from $p$, whose direction is the positive normal direction w.r.t. the orientation of $\gamma$. $N(\gamma,o)$ is the modulo $2$ count of the intersections of $h_p$ with the circles in $\underline{r}_{o}$. It is easy to see $N(\gamma,o)$ that  does not depend on $p$, for the choice of $p$ in a dense open set of $\gamma$, and thus is well defined.

\begin{rem*}\label{Formal_properties_of_beta}
In $A_{BN}$ the product $x_{\circ}x_{\bullet}$ is trivial, while $x_{\circ}^{2} = U x_{\circ}$ and $x_{\bullet}^{2} = - U x_{\bullet}$. Furthermore, we have $\Delta_{BN}(x_{\circ}) = x_{\circ} \otimes x_{\circ}$ and $\Delta_{BN}(x_{\bullet}) = x_{\bullet} \otimes x_{\bullet}$.
\end{rem*}

\begin{definition}
Let $D$ be an oriented link diagram. The \emph{$\beta$-cycles} (associated to $D$ over the ring $R$) are homogeneous cycles in $C^{\bullet,\bullet}_{BN}(D,R[U])$, defined as follows
\[\beta(D,R) = \beta(o_{D},R)\qquad \overline{\beta}(D,R) = \beta(-o_{D},R),\]
where $o_{D}$ is the orientation of $D$.
\end{definition}

\begin{proposition}
If $D$ is an oriented link diagram, then $\beta$-cycles associated to $D$ are cycles. Moreover, their homological degree is $0$ and their quantum degree is given by Equation \eqref{quantum_deg_beta}.
\end{proposition}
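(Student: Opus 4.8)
The plan is to verify the three assertions separately: homogeneity, homological degree $0$, and the claimed quantum degree are quick consequences of the definitions, whereas the fact that $\beta(D,R)$ and $\overline{\beta}(D,R)$ are cycles is where the work lies, and that is where I would concentrate the argument.

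Write $\underline{r}$ for the oriented resolution associated to $o_D$, so $\beta(D,R)=\beta(o_D,R)$ is the enhanced state supported on $\underline{r}$ with each circle $\gamma$ carrying the label $b_\gamma\in\{x_\circ,x_\bullet\}$. In the homological grading $\vert\underline{s}\vert-n_-$ the oriented resolution sits in degree $0$, because each positive crossing is given its $0$-resolution and each negative crossing its $1$-resolution, so $\vert\underline{r}\vert=n_-$; and reversing the orientation of every component of $D$ changes no crossing sign, hence does not alter the underlying resolution, so $\overline{\beta}(D,R)=\beta(-o_D,R)$ is supported on the same $\underline{r}$ and is again in homological degree $0$. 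For the quantum degree, both admissible labels $x_\circ=X$ and $x_\bullet=X-U$ are homogeneous of degree $-2$ in $A_{BN}$ (recall $deg(X)=deg(U)=-2$), so $\beta$ and $\overline{\beta}$ are homogeneous enhanced states; substituting into the quantum grading formula — each of the $V(D)$ circles contributing its degree $-2$ label together with the shift on its copy of $A$, and $\vert\underline{r}\vert=n_-$ — gives $qdeg(\beta)=qdeg(\overline{\beta})=w(D)-V(D)$, which is \eqref{quantum_deg_beta}.

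It remains to show $d^0_{BN}(\beta)=0$, the argument for $\overline{\beta}$ being identical. Since $\beta\in C^0_{BN}(D,R[U])$, the differential carries it to $\sum_{\underline{r}\prec\underline{s}}S(\underline{r},\underline{s})\,d_{\underline{r}}^{\underline{s}}(\beta)$, a sum indexed by the resolutions $\underline{s}$ obtained from $\underline{r}$ by switching a single $0$-resolution to a $1$-resolution — that is, indexed by the positive crossings of $D$. As the summands land in distinct direct summands $A_{\underline{s}}$ of $C^1_{BN}$, it suffices to prove that each $d_{\underline{r}}^{\underline{s}}(\beta)$ vanishes. For this I would invoke two local facts about $\underline{r}$: (L1) the two arcs of $\underline{r}$ abutting a crossing lie on two \emph{distinct} circles, so switching that crossing \emph{merges} them and never splits a circle; and (L2) these two circles $\gamma_1,\gamma_2$ have opposite nesting-number parity with respect to $o_D$, so $\{b_{\gamma_1},b_{\gamma_2}\}=\{x_\circ,x_\bullet\}$. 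Granting these, $d_{\underline{r}}^{\underline{s}}(\beta)=\big(\bigotimes_{\gamma\neq\gamma_1,\gamma_2}b_\gamma\big)\otimes m(x_\circ,x_\bullet)$, and $m(x_\circ,x_\bullet)=X(X-U)=X^2-UX=0$ in $A_{BN}=(R[U])[X]/(X^2-UX)$ — exactly the relation $x_\circ x_\bullet=0$ recorded after the definition of the $b_\gamma$'s. Hence every term vanishes and $d^0_{BN}(\beta)=0$.

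The hard part is thus (L1)--(L2), which are ``easy to see'' but merit a careful argument, and I would prove them together by a local analysis at a crossing $c$ that is $0$-resolved in $\underline{r}$: there the two abutting arcs are smoothed compatibly with $o_D$, hence are coherently oriented and locally ``parallel'', so for a generic point $p$ on one of them the ray $h_p$ computing its nesting number meets the other arc exactly once and thereafter coincides with the ray $h_q$ from a generic point $q$ of that other arc — possibly after interchanging the two arcs according to the side toward which the positive normal points. Since, by the Jordan curve theorem, the mod-$2$ count of intersections of a generic ray with the family of circles of $\underline{r}$ is well defined, this yields $N(\gamma_1,o_D)\equiv N(\gamma_2,o_D)+1\pmod 2$; in particular $\gamma_1\neq\gamma_2$ (a single circle cannot carry two nesting parities), which is (L1), and the labels are as in (L2). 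Turning the $0$-resolution at $c$ into the $1$-resolution then reconnects the ``left/right'' pair of arcs into a ``top/bottom'' pair, amalgamating $\gamma_1$ and $\gamma_2$ into one circle — the merge in (L1). I expect the precise form of (L2) — fixing the convention for the positive normal and checking that the ray meets the other arc exactly once — to be the only genuinely delicate point. (Alternatively one can sidestep the local analysis: by \eqref{eq:projectionofbeta} the chain map $\pi_{TLee}$ sends $\beta$ to the canonical generator $\mathbf{v}_{o_D}$ in twisted Lee theory, which is a cycle, and $\pi_{TLee}$ restricts to an isomorphism in each bidegree, so the homogeneous element $d^0_{BN}(\beta)$, having trivial image under $\pi_{TLee}$, must vanish; this replaces (L1)--(L2) with the identity \eqref{eq:projectionofbeta}.)
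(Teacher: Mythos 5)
Your argument is correct and follows the paper's route: the paper's one-line proof of the cycle claim cites Rasmussen's Lemma 2.4 and Corollary 2.5 for the bipartiteness of the circles in the oriented resolution (your (L1)--(L2)) and combines this with $x_\circ x_\bullet = 0$, which are exactly the two ingredients you use, while the degree computations are declared immediate. You supply a direct ray-tracing proof of the bipartiteness fact instead of citing Rasmussen, which is a reasonable unpacking rather than a genuinely different approach; your alternative argument via $\pi_{TLee}$ also works but is not what the paper does.
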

\begin{proof}
That the $\beta$-cycles are cycles follows from two facts: the set of circles in the oriented resolution is bipartite (see \cite[Lemma 2.4 \& Corollary 2.5]{Rasmussen10}), and $x_{\circ} x_{\bullet} = 0$ (see Remark \ref{Formal_properties_of_beta}). The computations of the homological and quantum degrees are immediate from the definitions.
\end{proof}
The most interesting properties of the $\beta$-cycles can be summarized in the following proposition.

\begin{proposition}[\cite{TransFromKhType17}]\label{Prop:betainvariants}
Let $L$ be an oriented link, and $D$ an oriented diagram representing $L$. Then the following hold
\begin{enumerate}
\item the projection of both  $\beta(D,R)$ and $\overline{\beta}(D,R)$ in Khovanov chain complex is the cycle $\psi = \psi (D)$ defined by Plamenevskaya in \cite{Plamenevskaya06};
\item the homology classes of $\beta(D,R)$ and $\overline{\beta}(D,R)$ are ($\mathbb{F}[U]$-)linearly independent, and non-torsion, in $H_{BN}^{\bullet,\bullet}(L,\mathbb{F}[U])$;
\item $\beta(D,R)$ and $\overline{\beta}(D,R)$ are invariant (up to sign) w.r.t. the maps induced by braid-like Reidemeister moves and positive first Reidemeister moves (see \cite{Khovanov00, BarNatan05cob, Thesis1});
\item if $D^\prime$ is obtained from $D$ by adding an negative kink, then
\[ [\beta(D^\prime,R)] = \pm U (\Phi_{1}^{-})_{*}([\beta(D,R)])\ \ \text{and}\ \ [\overline{\beta}(D^\prime,R)] = \pm U (\Phi_{1}^{-})_{*}([\overline{\beta}(D,R)]),\]
where $\Phi_{1}^{-}$ is the maps associated to the negative first Reidemeister move (see \cite{Khovanov00, BarNatan05cob, Thesis1});
\item if $D$ is the closure of a braid with a row of negative crossings (i.e. exists an index $i$ such that $\sigma_{i}^{-1}$ appears in the word associated to the braid, but $\sigma_{i}$ does not), then
\[ \exists\: x,\: y \in C^{0,\bullet}(D,R[U]) \quad \text{such that}\quad [\beta(D,R)] = U [x],\: [\overline{\beta}(D,R)] = U [y].\]
\end{enumerate}
\qed
\end{proposition}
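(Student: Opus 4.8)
The plan is to separate the five items into those that are formal consequences of the characterisation of the $\beta$-cycles and those requiring an analysis of the chain maps induced by diagrammatic moves. Throughout I would use that $\beta(D,R)$ and $\overline{\beta}(D,R)$ are pinned down by two facts --- $\pi_{TLee}$ carries them to Lee's canonical generators $\mathbf{v}_{\pm o_D}(D)$ (Equation~\eqref{eq:projectionofbeta}), and their quantum degree equals $w(D)-V(D)$ (Equation~\eqref{quantum_deg_beta}) --- combined with the explicit labels $b_\gamma\in\{x_\circ,x_\bullet\}$ and the identities of Remark~\ref{Formal_properties_of_beta}.

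\emph{Items (1) and (2).} The projection onto the Khovanov complex is reduction modulo $U$, which sends both $x_\circ=X$ and $x_\bullet=X-U$ to $X$. Since the underlying unoriented resolution of $\beta$ and $\overline{\beta}$ is the Seifert resolution of $D$, and reversing the orientation does not change it, both project to the state labelling every Seifert circle by $X$; by definition this is Plamenevskaya's cycle $\psi(D)$ of \cite{Plamenevskaya06}, which is~(1). For~(2), applying $\pi_{TLee}$ and Equation~\eqref{eq:projectionofbeta} identifies $[\beta(D,R)]$ and $[\overline{\beta}(D,R)]$ with two distinct elements of Lee's canonical basis of $H^\bullet_{TLee}(L,\mathbb{F})$ (\cite{Lee05}); in particular, if $U^{k}\beta(D,R)$ were a boundary then setting $U=1$ would make $\mathbf{v}_{o_D}(D)$ a boundary in twisted Lee theory, which is impossible, so both classes are non-torsion. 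Finally, Bar-Natan homology has no $(U-1)$-torsion --- inverting $U$ makes it torsion-free --- so an elementary descent on the power of $(U-1)$ dividing the coefficients upgrades the $\mathbb{F}$-independence of the images in twisted Lee theory to $\mathbb{F}[U]$-linear independence of $[\beta(D,R)]$ and $[\overline{\beta}(D,R)]$.

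\emph{Items (3)--(5).} For each move I would compute the image of the enhanced state $\beta$ under the induced chain map. A direct count shows $w-V$ is unchanged by a braid-like Reidemeister~II or~III move and by a positive Reidemeister~I move; since the twisted Lee canonical generators are invariant up to sign under every Reidemeister move, and a homogeneous Bar-Natan class of quantum degree $w-V$ is determined (up to the harmless ambiguity by $U$-torsion) by its image under $\pi_{TLee}$, the induced map sends $\beta(D,R)$ to $\pm\beta(D',R)$, and likewise $\overline{\beta}$; this is~(3). Under a negative Reidemeister~I move $w-V$ instead drops by $2$ (the writhe decreases by $1$, the number of Seifert circles increases by $1$), while $(\Phi_{1}^{-})_{*}$ preserves both gradings; so $(\Phi_{1}^{-})_{*}[\beta(D,R)]$ lies two quantum degrees above $[\beta(D',R)]$ but has the same image under $\pi_{TLee}$, whence multiplication by $U$ restores the degree and the projection and one gets $[\beta(D',R)]=\pm U\,(\Phi_{1}^{-})_{*}[\beta(D,R)]$, which is~(4). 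For~(5), a braid word in which $\sigma_i^{-1}$ occurs but $\sigma_i$ does not produces a pair of Seifert circles sharing only negative crossings, with $\beta$-labels one $x_\circ$ and one $x_\bullet$; using that $\Delta_{BN}$ is diagonal on $x_\circ$, $x_\bullet$ and that $x_\circ^2=Ux_\circ$, $x_\bullet^2=-Ux_\bullet$ (Remark~\ref{Formal_properties_of_beta}), I would perform a homotopy supported near that column to rewrite $\beta(D,R)$ as $U\cdot x$ with $x\in C^{0,\bullet}(D,R[U])$ a cycle in homological degree $0$, and the analogous computation with the reversed orientation yields $y$.

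The main obstacle I expect is item~(5): one must arrange the local cancellation along the negative column so that the leftover $x$ is again a cycle of homological degree $0$, and check that the interaction of that column with the rest of the diagram does not prevent extracting the common factor of $U$ (and, towards the sharper Theorem~\ref{thm:main1}, that the contributions of distinct negative vertices of $\Gamma(D)$ combine additively). By contrast, items~(3) and~(4) reduce to a finite verification once the normalisations of the Reidemeister chain maps of \cite{Khovanov00, BarNatan05cob, Thesis1} are fixed, and items~(1) and~(2) are essentially formal.
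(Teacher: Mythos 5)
The paper does not prove Proposition~\ref{Prop:betainvariants}: it is quoted from \cite{TransFromKhType17} and stated without argument, so there is no in-text proof to compare your sketch against. Your reconstruction is nevertheless broadly consistent with how the present paper's machinery is set up and later used. Items (1) and (2) do follow formally as you say: both $x_\circ$ and $x_\bullet$ reduce to $X$ modulo $U$, so both $\beta$-cycles project to $\psi$; and your observation that $H_{BN}^{\bullet,\bullet}(L,\mathbb{F}[U])$ has only $U$-power torsion (inverting $U$ turns $A_{BN}$ into twisted Lee theory via the substitution $Y=X/U$, yielding a free module) combined with the descent on the $(U-1)$-adic valuation does upgrade the $\mathbb{F}$-independence of Lee's canonical classes to $\mathbb{F}[U]$-linear independence of $[\beta]$ and $[\overline{\beta}]$. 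Item (5) is, in spirit, the mechanism the paper itself isolates and generalises in Lemma~\ref{Lem:combinatoriallemmaboundonc}, so your sketch there is on target. The delicate point, which you correctly flag, is items (3) and (4): the characterisation of $\beta$ by quantum degree together with $\pi_{TLee}$-projection pins down the image under a Reidemeister chain map only once one already knows that those chain maps carry twisted-Lee canonical generators to $\pm$ canonical generators. That fact is genuine input requiring one to unwind the explicit chain maps of \cite{Khovanov00,BarNatan05cob,Thesis1}, and it is precisely the part of the argument one cannot reconstruct from this paper without opening the cited source.
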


Using the long exact sequence in homology, induced by the following short exact sequence of (bi-graded) $R$-complexes
\[ 0 \longrightarrow UC_{BN}^{\bullet,\bullet}(D,R[U]) \longrightarrow C_{BN}^{\bullet,\bullet}(D,R[U]) \longrightarrow C_{Kh}^{\bullet,\bullet}(D,R) \to 0\]
together with point (1) of Proposition \ref{Prop:betainvariants}, we obtain that the homology class of $\psi$ is trivial if, and only if, both the homology classes of $\beta(D,R)$ and $\overline{\beta}(D,R)$ are multiples of $U$. Thus, points (4) and (5) of Proposition \ref{Prop:betainvariants} can be interpreted as conditions for the vanishing of $[\psi]$ (cf. \cite[Theorem 3 \& Proposition 3]{Plamenevskaya06}).

Since the maps induced by the Reidemeister moves in Bar-Natan homology are isomorphisms of bi-graded $R[U]$-modules, the integers
\[ c_{R}(D) = max \left\lbrace k\: \vert \: [\beta(D,R)] = U^k x,\: \text{for some }x\in H_{BN}^{0,\bullet}(D, R[U]) \right\rbrace\]
and
\[ \overline{c}_{R}(D) = max \left\lbrace k\: \vert \: [\overline{\beta}(D,R)] = U^k x,\: \text{for some }x\in H_{BN}^{0,\bullet}(D, R[U]) \right\rbrace\]
are transverse braid invariants. In light of what we said above, we can interpret these integers (called \emph{$c$-invariants}) as  the ``degree of vanishing of $[\psi]$''. 
\section{Combinatorial bounds on the $c$- and $s$-invariants}

\subsection{A Bennequin-type inequality}\label{sec:Bennequinineq}

Finding upper bounds for the classical invariants of transverse and Legendrian links in terms of invariants of the topological link type originated as a problem in contact topology, but now has become more a subject of knot and braid theories. These bounds are given by numerical three dimensional (e.g.\: the Seifert genus), four dimensional (e.g.\: the slice genus) or combinatorial link invariants (e.g.\: the un-knotting number), or also by quantities related to polynomial link invariants (such as the Kauffman and HOMFLY-PT polynomials) and, more recently, link homologies (such as Khovanov, Khovanov-Rozansky and knot Floer homologies). This collection of bounds has been named collectively \emph{Bennequin-type inequalities}.

Among the earlier Bennequin-type inequalities there are: the inequality proven by Rudolph (\cite{Rudolph93}), in terms of the slice genus, and the inequality proved by Eliashberg (\cite{Eliashberg92}), in terms of the Euler characteristic of a surface bounding a transverse or a Legendrian link in a tight contact manifold. Then, more than ten years later, Plamenevskaya (\cite{Plamenevskaya06}), and Shumakovitch (\cite{Shumakovitch07}), introduced similar inequalities for the featuring the $s$-invariant instead of the genus (we call them \emph{Bennequin $s$-inequalities}).

In the next proposition we will prove a new Bennequin $s$-inequality, featuring the $c$-invariants, which sharpens the bound given by Plamenevskaya and Shumakovitch. This inequality will be a key ingredient to produce our bound.

\begin{rem*}
We will state the next proposition in terms of the invariant $c_{R}$. The statement holds true also by replacing $c_{R}$ with $\overline{c}_{R}$.
\end{rem*}

\begin{proposition}[Bennequin $s$-inequality]\label{Prop:Bennequins}
Let $D$ be an oriented link diagram representing the link-type $\lambda$, and let $\mathbb{F}$ be a field, then
\[ s(\lambda;\mathbb{F}) \geq w(D) - V(D) + 2 c_{\mathbb{F}}(D) + 1,\]
where $w$ is the writhe of $L$ and $V(D)$ is the number of circles in the oriented resolution of $D$.
\end{proposition}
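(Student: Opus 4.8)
The strategy is to connect the $s$-invariant of $\lambda$ to the filtered degree of the canonical generators $\mathbf{v}_{o_D}$, $\mathbf{v}_{-o_D}$ in twisted Lee homology, and then to exploit the fact (built into the definition of the $c$-invariant) that $[\beta(D,R)]$ is divisible by $U^{c_{\mathbb F}(D)}$ in Bar-Natan homology. First I would recall that, by the Remark following Proposition~\ref{Prop:betainvariants} together with the short exact sequence $0 \to UC_{BN}^{\bullet,\bullet} \to C_{BN}^{\bullet,\bullet} \to C_{Kh}^{\bullet,\bullet} \to 0$, powers of $U$ in $H_{BN}^{0,\bullet}$ correspond under $\pi_{TLee}$ to jumps in the quantum filtration on $H_{TLee}^{\bullet}$. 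Concretely, if $[\beta(D,R)] = U^{k}[x]$ for some homogeneous $x \in H_{BN}^{0,\bullet}(D,R[U])$, then $\pi_{TLee}$ sends $[\beta(D,R)]$ to $[\mathbf{v}_{o_D}]$ and, because multiplication by $U$ is a degree $-2$ operation (recall $deg(U) = -2$) while $\pi_{TLee}$ is a filtered isomorphism $C_{BN}^{i,j} \cong \mathscr{F}^{j}C^{i}$, the class $[\mathbf{v}_{o_D}]$ must lie in filtration level at least $qdeg(x) = qdeg(\beta(D,R)) + 2k = w(D) - V(D) + 2c_{\mathbb F}(D)$, using Equation~\eqref{quantum_deg_beta} and the maximality of $k = c_{\mathbb F}(D)$. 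The same bound holds for $[\mathbf{v}_{-o_D}]$ via $\overline{\beta}(D,R)$ and $\overline{c}_{\mathbb F}(D)$; since we are proving the $c_{\mathbb F}$ statement, I only need the bound for one of them, but in fact the standard argument for $s$ uses the filtered degrees of the sum and difference $\mathbf{v}_{o_D} \pm \mathbf{v}_{-o_D}$.

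Next I would feed this into the definition of $s(\lambda;\mathbb F)$. Recall $s(o,L;\mathbb F) = \tfrac{1}{2}\bigl(Fdeg[\mathbf{v}_o - \mathbf{v}_{-o}] - Fdeg[\mathbf{v}_o + \mathbf{v}_{-o}]\bigr)$; there is the well-known general fact (due to Rasmussen, Beliakova--Wehrli) that $Fdeg[\mathbf{v}_o - \mathbf{v}_{-o}]$ and $Fdeg[\mathbf{v}_o + \mathbf{v}_{-o}]$ differ by exactly $2$, so $s(o,L;\mathbb F) = Fdeg[\mathbf{v}_o + \mathbf{v}_{-o}] + 1 = Fdeg[\mathbf{v}_o - \mathbf{v}_{-o}] - 1$. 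It therefore suffices to bound $Fdeg[\mathbf{v}_{o_D} + \mathbf{v}_{-o_D}]$ from below. Since both $[\mathbf{v}_{o_D}]$ and $[\mathbf{v}_{-o_D}]$ lie in filtration level $\geq w(D) - V(D) + 2c_{\mathbb F}(D)$ by the previous paragraph (the bound on $[\mathbf{v}_{-o_D}]$ coming from the parallel statement for $\overline{c}_{\mathbb F}$, or alternatively from the symmetry of the construction under reversing the orientation of $D$), their sum does too, whence
\[
s(\lambda;\mathbb F) = Fdeg[\mathbf{v}_{o_D} + \mathbf{v}_{-o_D}] + 1 \geq w(D) - V(D) + 2c_{\mathbb F}(D) + 1,
\]
which is the claim.

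The main obstacle, and the step requiring the most care, is the precise dictionary between $U$-divisibility in $H_{BN}^{0,\bullet}$ and the filtration jump in $H_{TLee}$: one must check that $\pi_{TLee}$ really does carry the $U$-power filtration on Bar-Natan homology to the quantum filtration on twisted Lee homology in a way compatible with the degree conventions, and that taking homology does not destroy this (i.e.\ that the filtered isomorphism $C_{BN}^{i,j}\cong\mathscr F^j C^i$ descends appropriately, which is where one invokes point~(2) of Proposition~\ref{Prop:betainvariants}, the non-torsion statement, to rule out that the $U$-power estimate is spoiled by torsion classes). A secondary subtlety is that $s(\lambda;\mathbb F)$ is defined using a fixed orientation $o = o_D$, so one must be slightly careful that the bound for the oppositely-oriented generator $[\mathbf{v}_{-o_D}]$ is genuinely available — this is exactly why the excerpt's Remark before Proposition~\ref{Prop:Bennequins} points out that the statement also holds with $\overline{c}_R$ in place of $c_R$, and one should either quote that symmetric statement or derive the needed estimate directly from the oriented-resolution description of $\overline{\beta}$. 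Everything else — the degree computation $qdeg(\beta(D)) = w(D) - V(D)$, the fact that $U$ has degree $-2$, and the $\pm 2$ gap between the filtered degrees of the sum and difference — is either quoted from the excerpt or is a standard lemma in the theory of the $s$-invariant.
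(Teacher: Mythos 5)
Your first paragraph is sound: the identity $[\mathbf{v}_{o_D}] = (\pi_{TLee})_*([\beta(D,\mathbb F)])$, together with $[\beta(D,\mathbb F)] = U^{c_{\mathbb F}(D)}[x]$, the degree computation $qdeg(\beta) = w(D)-V(D)$, and the comparison $qdeg \leq Fdeg\circ\pi_{TLee}$ do give $Fdeg([\mathbf{v}_{o_D}]) \geq w(D)-V(D)+2c_{\mathbb F}(D)$, and this is exactly what the paper establishes. But the second paragraph has two genuine gaps. First, you assert $s = Fdeg[\mathbf{v}_{o_D}+\mathbf{v}_{-o_D}] + 1$, which presupposes that the sum, rather than the difference, realizes the minimum of the two filtered degrees. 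That is not automatic (and in fact may depend on the parity of the number of link components). The paper avoids committing to this by working directly with $s-1 = \min\{Fdeg[\mathbf{v}_{o_D}-\mathbf{v}_{-o_D}],\,Fdeg[\mathbf{v}_{o_D}+\mathbf{v}_{-o_D}]\}$ and then observing that $\{[\mathbf{v}_{o_D}\pm\mathbf{v}_{-o_D}]\}$ and $\{[\mathbf{v}_{o_D}],[\mathbf{v}_{-o_D}]\}$ generate the same submodule and hence have the same minimum filtered degree. Second — and this is the more serious issue — you need $Fdeg([\mathbf{v}_{-o_D}]) \geq w(D)-V(D)+2c_{\mathbb F}(D)$, but the arguments you cite (the ``parallel statement for $\overline{c}_{\mathbb F}$'' or reversing the orientation of $D$) only give $Fdeg([\mathbf{v}_{-o_D}]) \geq w(D)-V(D)+2\overline{c}_{\mathbb F}(D)$. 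Since there is no a priori inequality between $c_{\mathbb F}(D)$ and $\overline{c}_{\mathbb F}(D)$, your argument as written only proves the weaker bound $s(\lambda;\mathbb F) \geq w(D)-V(D)+2\min(c_{\mathbb F}(D),\overline{c}_{\mathbb F}(D))+1$.

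The paper closes this gap with an additional ingredient you did not supply: the filtered automorphism of $A_{TLee}$ fixing $1$ and exchanging $x_\circ$ and $x_\bullet$ induces an automorphism of $C^\bullet_{TLee}$ commuting (up to sign) with the differential and preserving the filtration, and it swaps $[\mathbf{v}_{o_D}]$ and $[\mathbf{v}_{-o_D}]$. This forces $Fdeg([\mathbf{v}_{o_D}]) = Fdeg([\mathbf{v}_{-o_D}])$. Combined with the submodule observation, one gets $s-1 = Fdeg([\mathbf{v}_{o_D}])$ outright, and the $c_{\mathbb F}$ bound from your first paragraph then applies to a single class, with no need to control $\overline{c}_{\mathbb F}$ or to decide whether $+$ or $-$ realizes the minimum. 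You should add this symmetry argument (or an equivalent replacement) to make the second step rigorous.
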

\begin{proof}
In \cite[Subsection 7.1]{BeliakovaWehrli08} (see also \cite[Proposition 3.3 \& Corollary 3.6]{Rasmussen10}) it has been observed that
\[ s(D;\mathbb{F}) - 1 = min\left\lbrace  Fdeg([\mathbf{v}_{o_{D}}(D) - \mathbf{v}_{-o_{D}}(D)]), Fdeg([\mathbf{v}_{o_{D}}(D) + \mathbf{v}_{-o_{D}}(D)])\right\rbrace,\]
where $o_{D}$ is the orientation of $D$.
 Since, both $\{ [\mathbf{v}_{o_{D}}(D) - \mathbf{v}_{-o_{D}}(D)], [\mathbf{v}_{o_{D}}(D) + \mathbf{v}_{-o_{D}}(D)]\}$, and $\{ [\mathbf{v}_{o_{D}}(D)], [\mathbf{v}_{-o_{D}}(D)]\}$, generate the same sub-module of $H_{TLee}^{\bullet}(D,\mathbb{F})$, it is easy to see that the minimum filtered degree of the elements of these two sets is the same (\cite[Corollary A.6]{Thesis1}).  The automorphism (as filtered vector space) of $A_{TLee}$ sending $1$ to $1$ and $x_{\circ}$ to $x_{\bullet}$, induces an automorphism of $C_{TLee}^\bullet$ which commutes (up to sign) with the differential and preserves the filtration. Since this automorphism exchanges  $[\mathbf{v}_{o_{D}}(D)]$ and $[\mathbf{v}_{-o_{D}}(D)]$, they have the same filtered degree (cf. with the proof of \cite[Corollary 3.6]{Rasmussen10}).

Thus, we obtained that
\[s(D;\mathbb{F}) - 1 = Fdeg([\mathbf{v}_{o_{D}}(D)]) = max \{ Fdeg(x)\: \vert\: x\in [\mathbf{v}_{o_{D}}(D)]\},\]
where the filtered degree $Fdeg(x)$ is defined as the maximal $j$ such that $x\in \mathscr{F}_{j}$.
Since
\[[\mathbf{v}_{o_{D}}(D)]=(\pi_{TLee})_{*}([\beta(D,\mathbb{F})]) = (\pi_{TLee})_{*}(U^{c_{\mathbb{F}}(D)}[x]) = (\pi_{TLee})_{*}([x]) = [\pi_{TLee}(x)],\]
for some $x$ such that $qdeg(x) = w(D) -V(D) + 2 c_{\mathbb{F}}(D)$, and since $ qdeg(x) \leq Fdeg(\pi_{TLee}(x))$ (cf. Remark \ref{gradingsFA}), and the proposition follows. 
\end{proof}

\begin{rem*}
If one replaces $[\mathbf{v}_{o_{D}}(D)]$ with $[\mathbf{v}_{-o_{D}}(D)]$ in the proof of Proposition \ref{Prop:Bennequins}, it also has to replace $\beta$ with $\overline{\beta}$, and $c$ with $\overline{c}$, obtaining the analogue statement for $\overline{c}$.
\end{rem*}

As a corollary we get Theorem  \ref{thm:main2}.

\begin{proof}[Proof of Theorem \ref{thm:main2}]
It is an immediate consequence of Proposition \ref{Prop:Bennequins} (just choose $D$ to be a closed braid), and from the definition of self-linking number in terms of closed braid diagrams.
\end{proof}
\subsection{Combinatorial bounds}\label{Combinatorialbounds}

Now we are ready to provide an lower bound for the value of the $c$-invariants. As in the case of Proposition \ref{Prop:Bennequins}, we will prove everything using $c$, but all the results hold by replacing $c$ with $\overline{c}$.
\begin{warning}
In the follow up we are not going to make any distinction between a vertex of the (simplified) Seifert graph, and the corresponding circle in the oriented resolution. Moreover, given two resolutions $\underline{r}$ and $\underline{s}$ which differ only for a local resolution at a crossing, say $c$, we will identify the circles of $\underline{r}$ which do not touch $c$ with the corresponding circles in $\underline{s}$.
\end{warning}

Let $D$ be an oriented link diagram.  Denote by $\Gamma_- = \Gamma_- (D)$ the full sub-graph of $\Gamma(D)$ spanned by the negative vertices.
Given $\Gamma^{\prime}$ a full sub-graph of $\Gamma_-$, define $x(\Gamma^{\prime})$ to be the enhanced state in $C_{BN}^{\bullet,\bullet}(D,R)$ whose underlying resolution is the oriented resolution, obtained by labeling a circle $v$ as follows
\[ x_v = \begin{cases} 1_{BN} & \text{if}\: v\in \Gamma_{-}\setminus \Gamma^\prime\\ \text{the same label as in }\beta(D,R) & \text{otherwise} \end{cases}\]
Now fix a vertex of $\Gamma^\prime$, say $v_{0}$, and define $x(\Gamma^{\prime}, v_{0})$ to be the enhanced cycle which is identical to $x(\Gamma^{\prime})$ except on $v_{0}$, where the label is the conjugate of the corresponding label in $\beta(D,R)$ -- i.e.\: the label is $x_{\circ}$ if the corresponding label in $\beta(D,R)$ is $x_\bullet$, and \emph{vice versa}. Now that all the notation is set, we can prove the following lemma.

\begin{lemma}\label{Lem:combinatoriallemmaboundonc}
Let $D$ be an oriented link diagram. If $\Gamma^{\prime}$ is a (non-empty) full sub-graph of $\Gamma_{-}(D)$, and $v_{0}\in V(\Gamma^{\prime})$, then
\begin{enumerate}
\item $\pm U x(\Gamma^{\prime}\setminus \{ v \}) = x(\Gamma^{\prime}) - x(\Gamma^{\prime},v_{0})$;
\item if $v_{0}\in V(\Gamma^{\prime})$ is a non-pure or a non-isolated (in $\Gamma^\prime$) vertex, then $x(\Gamma^{\prime},v_{0})$ is a boundary. 
\end{enumerate}
\end{lemma}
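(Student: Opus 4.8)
The plan is to prove the two statements essentially by direct computation with enhanced states in $C_{BN}^{\bullet,\bullet}(D,R)$, using the explicit description of $\beta(D,R)$ as an enhanced state together with the formal properties collected in Remark~\ref{Formal_properties_of_beta} (in particular $x_\circ x_\bullet = 0$, $x_\circ^2 = Ux_\circ$, $x_\bullet^2 = -Ux_\bullet$, and $\Delta_{BN}(x_\circ) = x_\circ\otimes x_\circ$, $\Delta_{BN}(x_\bullet) = x_\bullet\otimes x_\bullet$). Throughout I would use the identification of circles in adjacent resolutions from the Warning, so that the differentials $d_{\underline r}^{\underline s}$ act only on the labels of the two circles touching the crossing being changed.

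For part~(1), I would first observe that all three enhanced states $x(\Gamma')$, $x(\Gamma',v_0)$ and $x(\Gamma'\setminus\{v_0\})$ have the \emph{same} underlying (oriented) resolution, so the identity is purely algebraic, tensor-factor by tensor-factor. On every circle $v \ne v_0$ the labels of $x(\Gamma')$ and $x(\Gamma',v_0)$ agree, so the difference localizes at $v_0$: if the $\beta$-label at $v_0$ is $x_\circ$ then $x(\Gamma') - x(\Gamma',v_0)$ has $v_0$-label $x_\circ - x_\bullet = U\cdot 1_{BN}$, and symmetrically $x_\bullet - x_\circ = -U\cdot 1_{BN}$ in the other case; in both cases the $v_0$-factor becomes $\pm U\cdot 1_{BN}$, which is exactly the $v_0$-label of $\pm U\, x(\Gamma'\setminus\{v_0\})$, while all other factors already coincide. (Here I am reading ``$x(\Gamma'\setminus\{v\})$'' in the statement as ``$x(\Gamma'\setminus\{v_0\})$''.) This step is routine.

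For part~(2), the idea is to exhibit $x(\Gamma',v_0)$ as $d_{BN}(y)$ for an explicit enhanced state $y$. Since $v_0$ is negative in $\Gamma(D)$, every crossing it shares with a neighbour is negative; changing the $0$-resolution at such a crossing to the $1$-resolution corresponds, in the oriented resolution, to a merge or split recorded by an edge at $v_0$. I would take $y$ supported on the resolution obtained from $\underline r_{o_D}$ by toggling one well-chosen crossing incident to $v_0$ — chosen using the non-purity or non-isolatedness hypothesis so that there is a neighbour $w$ of $v_0$ in $\Gamma(D)$ which is \emph{not} in $\Gamma'$ (if $v_0$ is non-pure, some neighbour is neutral or positive, hence not a negative vertex and so not in $\Gamma'$; if $v_0$ is non-isolated in $\Gamma'$ but the pure case fails differently, one has to argue that a suitable neighbour outside $\Gamma'$ still exists, or toggle a crossing to a neighbour inside $\Gamma'$ and use the split map) — and label $y$ on all unaffected circles as in $x(\Gamma')$. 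Then $d_{BN}(y)$ has contributions only from toggling crossings incident to the affected circle(s); using $\Delta_{BN}(x_\circ) = x_\circ\otimes x_\circ$, $\Delta_{BN}(x_\bullet)=x_\bullet\otimes x_\bullet$, $x_\circ x_\bullet = 0$, and the fact that $1_{BN}$-labelled circles multiply/comultiply trivially into the picture, almost all terms of $d_{BN}(y)$ vanish or telescope, leaving precisely $\pm x(\Gamma',v_0)$. One must also check that the extra terms coming from the other crossings incident to the toggled circle cancel in pairs, which is where the sign function $S(\underline r,\underline s)$ (and the square-zero property $d_{BN}^2=0$) is used, together with the bipartiteness of the oriented resolution already invoked in the proof that the $\beta$-cycles are cycles.

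The main obstacle I expect is the bookkeeping in part~(2): getting the support of the chain $y$ exactly right so that \emph{all} unwanted terms of $d_{BN}(y)$ cancel, and verifying the signs. The conceptual content is small — it is the same ``local move'' computation used to show $[\beta(D',R)] = \pm U(\Phi_1^-)_*[\beta(D,R)]$ in Proposition~\ref{Prop:betainvariants}(4) — but splitting the argument cleanly into the non-pure case (where a neighbour outside $\Gamma'$ supplies the needed $1_{BN}$ to kill cross-terms) and the non-isolated case (where one toggles a crossing between $v_0$ and another vertex of $\Gamma'$, so the split/merge stays inside $\Gamma'$) requires care, and one should double-check that these two cases together really cover ``non-pure or non-isolated in $\Gamma'$''.
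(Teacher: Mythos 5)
Your plan for part~(1) matches the paper's (the paper dismisses it as ``follows directly from the definitions''), and your reading of $v$ as $v_0$ is the right fix for what is a typo in the statement. Your plan for part~(2) also has the correct shape — toggle a single crossing $c$ joining $v_0$ to a well-chosen neighbour $v$, put an enhanced state $y$ on the resulting resolution $\underline s$, and compute $d_{BN}(y)$ — and that is exactly what the paper does. But two of the details you flag as ``to be checked'' are in fact where the argument lives, and your proposed way through them is not quite right.

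First, the case split. You hunt for a neighbour \emph{outside} $\Gamma'$, then hedge about the non-isolated case. The clean unifying criterion is: choose a negative edge from $v_0$ to a vertex $v \notin \Gamma_-\setminus\Gamma'$. If $v_0$ is non-pure it has a non-negative neighbour (hence $v\notin\Gamma_-$); if $v_0$ is non-isolated in $\Gamma'$ it has a neighbour in $\Gamma'$; in either case $v\notin\Gamma_-\setminus\Gamma'$, and that is the only thing you need, because it guarantees that $v$ carries a $\beta$-label (namely $x_\circ$ or $x_\bullet$, not $1_{BN}$) in $x(\Gamma',v_0)$. The two cases are then handled uniformly: set $y$ to agree with $x(\Gamma',v_0)$ on all circles other than the merged one, and give the merged circle $\gamma$ the label $x_v$. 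Note that by Rasmussen's bipartiteness of the oriented resolution, $x_v$ is the conjugate of the $\beta$-label of $v_0$, which is precisely the label $v_0$ carries in $x(\Gamma',v_0)$; this is why $\Delta_{BN}(x_v)=x_v\otimes x_v$ reproduces the correct labels on both $v$ and $v_0$ after the split along $c$.

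Second, and more importantly, there is \emph{no pairwise cancellation} to arrange, and the sign function $S$ and $d_{BN}^2=0$ play no role. In $\underline s$ the $0$-resolved crossings are exactly the positive crossings together with $c$. Changing the local resolution at $c$ is the one split, and it yields $\pm x(\Gamma',v_0)$. Every other contribution to $d_{BN}(y)$ is a merge along a \emph{positive} crossing; a positive crossing only touches non-negative circles, which therefore all carry $\beta$-labels in $y$, and these labels are conjugate for adjacent circles by bipartiteness; since $x_\circ x_\bullet = 0$, each such merge contribution vanishes on its own. So you do not need to pair terms up using signs — you need the single observation that adjacent circles in the oriented resolution have conjugate $\beta$-labels. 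Without this, the bookkeeping you describe would not close, since there is no sign-driven cancellation available: distinct positive crossings land in distinct resolutions $\underline s'$, so their contributions cannot cancel against one another.
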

\begin{proof}
Point (1) follows directly from the definitions. Thus, we can concentrate on (2). Since $v_{0}$ is either non-pure or non-isolated, there is at least a vertex, say $v$, which is not in $\Gamma_{-}\setminus \Gamma^\prime$ and is connected to $v_{0}$ by a negative edge.
\begin{figure}[h]
\centering
\begin{tikzpicture}[scale = .6]
%%%%%%%%%%Crossings

\draw[gray] (2,5) .. controls +(0,-.5) and +(0,.5) .. (0,4);
\draw[gray] (2,3.5) .. controls +(0,-.5) and +(0,.5) .. (0,2.5);
\draw[gray] (2,2) .. controls +(0,-.5) and +(0,.5) .. (0,1);
\draw[gray] (2,0) .. controls +(0,-.5) and +(0,.5) .. (0,-1);

\pgfsetlinewidth{8*\pgflinewidth}
\draw[white] (0,5) .. controls +(0,-.5) and +(0,.5) .. (2,4);
\draw[white]  (0,3.5) .. controls +(0,-.5) and +(0,.5) .. (2,2.5);
\draw[white]  (0,2) .. controls +(0,-.5) and +(0,.5) .. (2,1);
\draw[white]  (0,0) .. controls +(0,-.5) and +(0,.5) .. (2,-1);
\pgfsetlinewidth{.125*\pgflinewidth}
\draw[gray] (0,5) .. controls +(0,-.5) and +(0,.5) .. (2,4);
\draw[gray] (0,3.5) .. controls +(0,-.5) and +(0,.5) .. (2,2.5);
\draw[gray] (0,2) .. controls +(0,-.5) and +(0,.5) .. (2,1);
\draw[gray] (0,0) .. controls +(0,-.5) and +(0,.5) .. (2,-1);

%%%%%%%%%% Dots & labels
\node at (1,.5) {$\vdots$};
\node at (-.5,3.5) {$v_0$};
\node at (2.5,3.5) {$v$};
\node at (1,1.25) {$c$};
\node at (1,-1.5) {$\underline{r}$};

%%%%%%%%% Squares

\draw[dashed] (-1,5) -- (3,5) -- (3,-1) -- (-1,-1) -- cycle;

%%%%%%%%%% Circles
\draw[thick] (0,5) .. controls +(.25,-.5) and +(.25,.5) .. (0,4);
\draw[thick] (0,4) -- (0,3.5);
\draw[thick] (0,3.5) .. controls +(.25,-.5) and +(.25,.5) .. (0,2.5);
\draw[thick,->] (0,2.5) -- (0,2);
\draw[thick] (0,2) .. controls +(.25,-.5) and +(.25,.5) .. (0,1);
\draw[thick] (0,1) -- (0,0);
\draw[thick] (0,0) .. controls +(.25,-.5) and +(.25,.5) .. (0,-1);

\draw[thick] (2,0) .. controls +(-.25,-.5) and +(-.25,.5) .. (2,-1);
\draw[thick] (2,5) .. controls +(-.25,-.5) and +(-.25,.5) .. (2,4);
\draw[thick] (2,4) -- (2,3.5);
\draw[thick] (2,3.5) .. controls +(-.25,-.5) and +(-.25,.5) .. (2,2.5);
\draw[thick,->] (2,2.5) -- (2,2);
\draw[thick] (2,2) .. controls +(-.25,-.5) and +(-.25,.5) .. (2,1);
\draw[thick] (2,1) -- (2,0);

\begin{scope}[shift = {(5,0)}]
%%%%%%%%%%Crossings

\draw[gray] (2,5) .. controls +(0,-.5) and +(0,.5) .. (0,4);
\draw[gray] (2,3.5) .. controls +(0,-.5) and +(0,.5) .. (0,2.5);
\draw[gray] (2,2) .. controls +(0,-.5) and +(0,.5) .. (0,1);
\draw[gray] (2,0) .. controls +(0,-.5) and +(0,.5) .. (0,-1);

\pgfsetlinewidth{8*\pgflinewidth}
\draw[white] (0,5) .. controls +(0,-.5) and +(0,.5) .. (2,4);
\draw[white]  (0,3.5) .. controls +(0,-.5) and +(0,.5) .. (2,2.5);
\draw[white]  (0,2) .. controls +(0,-.5) and +(0,.5) .. (2,1);
\draw[white]  (0,0) .. controls +(0,-.5) and +(0,.5) .. (2,-1);
\pgfsetlinewidth{.125*\pgflinewidth}
\draw[gray] (0,5) .. controls +(0,-.5) and +(0,.5) .. (2,4);
\draw[gray] (0,3.5) .. controls +(0,-.5) and +(0,.5) .. (2,2.5);
\draw[gray] (0,2) .. controls +(0,-.5) and +(0,.5) .. (2,1);
\draw[gray] (0,0) .. controls +(0,-.5) and +(0,.5) .. (2,-1);

%%%%%%%%%% Dots & labls
\node at (1,.5) {$\vdots$};
\node at (-.5,3.5) {$\gamma$};
\node at (1,1) {$c$};
\node at (1,-1.5) {$\underline{s}$};

%%%%%%%%% Squares

\draw[dashed] (-1,5) -- (3,5) -- (3,-1) -- (-1,-1) -- cycle;

%%%%%%%%%% Circles
\draw[thick] (0,5) .. controls +(.25,-.5) and +(.25,.5) .. (0,4);
\draw[thick] (0,4) -- (0,3.5);
\draw[thick] (0,3.5) .. controls +(.25,-.5) and +(.25,.5) .. (0,2.5);
\draw[thick] (0,2.5) -- (0,2);
\draw[thick] (0,1) .. controls +(.25,.5) and +(-.25,.5) .. (2,1);
\draw[thick] (0,1) -- (0,0);
\draw[thick] (0,0) .. controls +(.25,-.5) and +(.25,.5) .. (0,-1);

\draw[thick] (2,0) .. controls +(-.25,-.5) and +(-.25,.5) .. (2,-1);
\draw[thick] (2,5) .. controls +(-.25,-.5) and +(-.25,.5) .. (2,4);
\draw[thick] (2,4) -- (2,3.5);
\draw[thick] (2,3.5) .. controls +(-.25,-.5) and +(-.25,.5) .. (2,2.5);
\draw[thick] (2,2.5) -- (2,2);
\draw[thick] (2,2) .. controls +(-.25,-.5) and +(.25,-.5) .. (0,2);
\draw[thick] (2,1) -- (2,0);
\end{scope}
\end{tikzpicture}
\caption{The circles $v_0$ and $v$ in the oriented resolution $\underline{r}$, the crossing $c$ and the circle $\gamma$ in the resolution $\underline{s} = \underline{s}(c)$.}
\label{fig:proofofdeltaelle}
\end{figure}
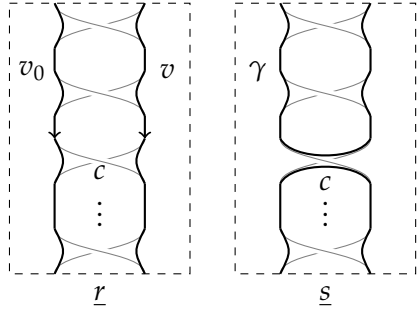

Choose a crossing $c$ connecting $v_{0}$ and $v$. Denote by $\underline{s}$ the resolution which differs from the oriented resolution of $L$, denoted by $\underline{r}$, only in $c$. Notice that since $c$ is a negative crossing in $L$, it follows that $\vert \underline{s} \vert < \vert \underline{r} \vert$. Denote by $\gamma$ the circle in $\underline{s}$ obtained by merging $v$ and $v_{0}$ (see Figure \ref{fig:proofofdeltaelle}). Now, define $y(\Gamma^\prime,v_{0},c)$ to be the enhanced state whose underlying resolution is $\underline{s}$, where all the circles, other than $\gamma$, have the same label as in $x(\Gamma^\prime,v_{0})$, and $\gamma$ has label $x_{v}$. We claim that
\[d_{BN}(y(\Gamma^\prime,v_{0},c)) = x(\Gamma^\prime,v_{0}).\]
Since $\underline{s}$ differs from $\underline{r}$ only in $c$, all the crossing which are replaced by their $0$-resolutions are either positive crossings, or $c$. 

In order to split a circle by changing a single local resolution, one has to find a crossing joining the circle with itself. Since by changing a resolution in the oriented resolution we merge two circles, the only circle in $\underline{s}$ which has such crossings is $\gamma$. But all the crossings touching only $\gamma$ are those which used to connect $v$ and $v_{0}$, and thus are all resolved with $1$-resolutions except from $c$. Thus all the contributions to $d_{BN}(y(\Gamma^\prime,v_{0},c))$ are obtained by merging two circles along a positive crossing (type A), or splitting $\gamma$ along $c$ (type B). All contributions of type A are trivial; these are (by definition) given by the enhanced states whose labels are the same as in $y(\Gamma^\prime,v_{0},c)$ except in the circle, say $\gamma^\prime$, obtained by merging two circles, say $\gamma_{0}$ and $\gamma_{1}$. The label of $\gamma^\prime$ is obtained by multiplying the labels which $\gamma_{0}$ and $\gamma_{1}$ had in $y(\Gamma^\prime,v_{0},c)$. However, these labels are the same as in $\beta(L,R)$, and thus they are conjugate. It follows that $\gamma^\prime$ has label $0$. 
So, $d_{BN}(y(\Gamma^\prime,v_{0},c))$ consist of a single enhanced state, coming from the single contribution of type B. Thanks to the behaviour of $x_{\circ}$ and $x_{\bullet}$ with respect to the co-multiplication (cf. Remark \ref{Formal_properties_of_beta}), the claim follows easily from the definition of $d_{BN}$.
\end{proof}

\begin{lemma}\label{Lem:deltaminus}
Let $D$ be an oriented link diagram, and let $\Gamma^\prime$ be any full sub-graph of $\Gamma_{-}(D)$. If there is a negative edge between two neutral vertices in $\Gamma(D)$, then 
\[ x( \Gamma^\prime ) = U y + d_{BN} z,\]
for some $y,\: z\in C_{BN}^{\bullet,\bullet}(D,R[U])$.
\end{lemma}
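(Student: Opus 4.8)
The plan is to reduce the statement to an application of Lemma \ref{Lem:combinatoriallemmaboundonc} by an inductive argument on the number of vertices of $\Gamma^\prime$, peeling off one vertex at a time. The base case is $\Gamma^\prime = \emptyset$, where $x(\emptyset) = \beta(D,R)$; but we are given that there is a negative edge between two neutral vertices $w_1, w_2$ of $\Gamma(D)$, and these vertices are \emph{not} in $\Gamma_-(D)$, so I would first handle this ``seed'' case separately. Concretely, I would mimic the construction in the proof of Lemma \ref{Lem:combinatoriallemmaboundonc}: pick the negative crossing $c$ joining $w_1$ and $w_2$, form the resolution $\underline{s}$ obtained from the oriented resolution by resolving $c$, and build an enhanced state $y$ on $\underline{s}$ whose differential $d_{BN}(y)$ produces $\beta(D,R)$ up to sign. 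The only subtlety is that $w_1$ and $w_2$ carry the labels $b_{w_1}, b_{w_2} \in \{x_\circ, x_\bullet\}$ they have in $\beta(D,R)$, which are \emph{conjugate} (since they are joined by a crossing and the oriented resolution is bipartite), so $x_{w_1} x_{w_2} = 0$; hence the type-A contributions at the merged circle vanish, exactly as before, and the single type-B contribution reconstructs $\beta(D,R)$ — except that because $x_{w_1}$ and $x_{w_2}$ are \emph{conjugate} rather than equal, splitting the merged circle via $\Delta_{BN}$ does \emph{not} simply return $\beta(D,R)$; I would instead need to split a circle labelled, say, $x_\circ$ into two circles and compare with $\beta$. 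This forces a modification: I would take $y$ with the merged circle $\gamma$ labelled $1_{BN}$, so that $\Delta_{BN}(1_{BN}) = X\otimes 1 + 1\otimes X - U\cdot 1\otimes 1$ gives three terms; two of them, after adjusting the other circles appropriately, combine to give $\pm\beta(D,R)$ (using $x_\circ = X$, $x_\bullet = X - U$), and the third is $U$ times a chain. This exhibits $\beta(D,R) = U y_0 + d_{BN} z_0$ for suitable $y_0, z_0$, proving the seed case.

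Next I would run the induction. Suppose $x(\Gamma^{\prime\prime}) = U y + d_{BN} z$ for every full subgraph $\Gamma^{\prime\prime}$ of $\Gamma_-(D)$ with fewer vertices than $\Gamma^\prime$. Given $\Gamma^\prime$ non-empty, pick any vertex $v_0 \in V(\Gamma^\prime)$. By part (1) of Lemma \ref{Lem:combinatoriallemmaboundonc},
\[
x(\Gamma^\prime) = x(\Gamma^\prime, v_0) \pm U\, x(\Gamma^\prime \setminus \{v_0\}).
\]
The second summand is $U$ times an enhanced state, so it is already of the desired form $U y$. For the first summand I would argue as follows: if $v_0$ can be chosen to be non-pure or non-isolated in $\Gamma^\prime$, then part (2) of Lemma \ref{Lem:combinatoriallemmaboundonc} says $x(\Gamma^\prime, v_0)$ is a boundary, i.e. of the form $d_{BN} z$, and we are done. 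The remaining case is when \emph{every} vertex of $\Gamma^\prime$ is pure and isolated in $\Gamma^\prime$ — i.e. $\Gamma^\prime$ consists of isolated pure negative vertices. But a pure negative vertex, being isolated in $\Gamma^\prime$, still has negative edges in $\Gamma_-(D)$ going to vertices of $\Gamma_-(D) \setminus \Gamma^\prime$ only if it is non-pure — contradiction; so a pure isolated vertex of $\Gamma^\prime$ is in fact isolated in all of $\Gamma(D)$, hence (being negative and isolated) a connected component of $\Gamma(D)$ on its own. Here I would invoke the hypothesis: since $\Gamma(D)$ contains a negative edge between two \emph{neutral} vertices, and since removing an isolated component does not affect that edge, I can reduce $\Gamma^\prime$ by applying the \emph{seed} construction one more time at a vertex $v_0$ of such a component. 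More carefully, in this degenerate case I would write $x(\Gamma^\prime) = x(\Gamma^\prime, v_0) \pm U x(\Gamma^\prime \setminus \{v_0\})$ as above, and then show $x(\Gamma^\prime, v_0)$ itself equals $U(\text{state}) + d_{BN}(\text{state})$ by the same local move used in the seed case, applied at the negative crossing joining the neutral vertices (note $x(\Gamma^\prime, v_0)$ differs from $x(\Gamma^\prime)$ only in label data supported away from those neutral vertices, so the construction goes through verbatim).

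I expect the main obstacle to be the bookkeeping in the seed case: verifying that the single type-B contribution of $d_{BN}(y_0)$, after the conjugate-label bipartiteness cancellation kills all type-A terms, reassembles into $\pm\beta(D,R)$ plus a multiple of $U$, given that the two neutral endpoints carry conjugate labels rather than equal ones. This requires a careful use of the formulas in Remark \ref{Formal_properties_of_beta} together with $\Delta_{BN}(1_{BN}) = X\otimes 1 + 1\otimes X - U\cdot 1\otimes 1$ and the identities $x_\circ = X$, $x_\bullet = X - U$; one must check that the ``wrong'' term produced by $\Delta_{BN}$ is genuinely divisible by $U$, which in turn uses that the two circles being created lie on opposite sides of the bipartition (hence their $\beta$-labels are conjugate) and a short computation like $X\otimes(X-U) - (X-U)\otimes X \in U\cdot(A\otimes A)$ together with $(X-U)\otimes(X-U) + X\otimes X = $ (something) $ + U\cdot(\dots)$. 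Everything else — the induction step, the reduction of the degenerate ``all pure isolated'' case, and the fact that adding $U y$ terms and $d_{BN} z$ terms stays within the claimed form — is routine once the seed case is in hand.
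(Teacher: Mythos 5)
There are several genuine gaps in your proposal, and they all stem from misreading the definition of $x(\Gamma^\prime)$. First, your ``seed'' identification $x(\emptyset)=\beta(D,R)$ is backwards: with the definition in the paper, $x(\emptyset)$ labels \emph{every} vertex of $\Gamma_{-}$ with $1_{BN}$, whereas it is $x(\Gamma_{-})$ that equals $\beta(D,R)$. Second, and more seriously, labeling the merged circle $\gamma$ by $1_{BN}$ cannot work. The Type-C split contribution then gives $\Delta_{BN}(1_{BN}) = X\otimes 1 + 1\otimes X - U\cdot 1\otimes 1$, which lives in a different quantum degree than the target $x_\circ\otimes x_\bullet = X\otimes X - U\,(X\otimes 1)$; the monomial $X\otimes X$ never appears, so no bookkeeping modulo $U$ can recover it. Labeling $\gamma$ by $1_{BN}$ also destroys the Type-A cancellation: a merge $m(1_{BN},x_u)=x_u$ is neither $0$ nor a multiple of $U$, so contributions from positive crossings touching $\gamma$ are no longer controlled. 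Third, your fallback claim that a pure vertex $v_0$ isolated in $\Gamma^\prime$ must be isolated in $\Gamma(D)$ is false: ``pure'' only says every neighbour of $v_0$ in $\Gamma(D)$ is negative, i.e.\ lies in $\Gamma_{-}$; such neighbours can perfectly well lie in $\Gamma_{-}\setminus\Gamma^\prime$, making $v_0$ isolated in $\Gamma^\prime$ but not in $\Gamma(D)$. So the degenerate case of your induction does not reduce to negative split components.

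Structurally, the induction is an overcomplication and buys nothing. The paper's proof is direct and applies uniformly to every $\Gamma^\prime$: pick the negative crossing $c$ between the two neutral vertices $v^\prime,v^{\prime\prime}$, resolve $c$ to get $\underline{s}$, and label the merged circle by $x_{v^\prime}$ --- the label $v^\prime$ carries in $\beta(D,R)$, and hence in $x(\Gamma^\prime)$, because $v^\prime$ is neutral and thus not in $\Gamma_{-}$, regardless of what $\Gamma^\prime$ is. With this labeling, all Type-A merges of $\gamma^\prime$ are with positive or neutral circles (only they can share a positive crossing with $v^\prime$ or $v^{\prime\prime}$), which carry $\beta$-labels; each such product is then either $0$ or $\pm U$ times a state by Remark \ref{Formal_properties_of_beta}, while the Type-C split yields $x(\Gamma^\prime)$ up to a multiple of $U$ (since $\Delta_{BN}(x_{v^\prime})=x_{v^\prime}\otimes x_{v^\prime}$ and $x_{v^{\prime\prime}}=x_{v^\prime}\mp U$). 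None of this depends on the size of $\Gamma^\prime$, so the whole peeling argument collapses to one computation.
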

\begin{proof}
Denote by $v^\prime$ and $v^{\prime\prime}$ any pair of neutral vertices connected by a negative edge. Choose a crossing $c$ connecting $v^\prime$ and $v^{\prime\prime}$. Let $\underline{s}$ the resolution obtained from the oriented resolution $\underline{r}$ by replacing the $1$-resolution in $c$ with its $0$-resolution. Define the enhanced cycle $z(\Gamma^\prime,c)$ as follows
\[z_{\gamma} = \begin{cases} x_{v^\prime} & \text{if}\: \gamma\text{ is the circle}\:\gamma ^\prime\: \text{obtained by merging }v^\prime\text{ and }v^{\prime\prime}\\
x_v &\text{if the circle }\gamma\text{ can be identified with the circle }v\in\underline{r}\end{cases}\]
\begin{rem*}
The whole argument applies without change if we replace $v^\prime$ with $v^{\prime\prime}$ in the definition of $z$.
\end{rem*}
Let us compute $d_{BN}z(\Gamma^\prime,c)$; we claim that
\[d_{BN}z(\Gamma^\prime,c) =  x( \Gamma^\prime ) - U y.\]
There are three kinds of enhanced states which contribute to $d_{BN}z(\Gamma^\prime,c)$: those whose resolution is obtained by changing the resolution at a positive crossing touching $\gamma^\prime$ (type A), those whose resolution is obtained by changing the resolution at a positive \emph{not} touching $\gamma$ (type B), and the enhanced state whose resolution is $\underline{r}$ (type C).

Type A contribution are obtained by merging positive or neutral circles with $\gamma^\prime$. The labels of these circles are exactly the same as in $\beta(L;R)$; in particular, they are either conjugate or equal to $z_{\gamma^\prime}$. In the first case, the contribution is trivial, in the second case the enhanced states obtained are multiples of $U$ (cf. Remark \ref{Formal_properties_of_beta}). Notice that there is at least one contribution of type A coming from the latter type of circles (otherwise, $v^\prime$ would be a negative circle). It is immediate that type B contribution are trivial. Finally, it is easy to check that the single type C contribution is exactly $x(\Gamma^\prime)$, and this proves the claim.
\end{proof}

Now, we can prove the following theorem.

\begin{theorem}\label{Thm:estimateonc}
Let $D$ be an oriented link diagram representing an oriented link $L$, and $R$ be a ring. Denote by $\ell_-$ be the number of split components of $D$ (i.e.\: connected components of $D$ seen as a four-valent graph) which have only negative crossings. Then
\[ \beta(D) = U^{V_-(D) - \ell_- + \delta_{-}(D)} y + d_{BN}z, \]
for some $y,\: z \in C_{BN}^{\bullet,\bullet}(D,R[U])$.
\end{theorem}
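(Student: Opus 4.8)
First I would observe that, straight from the definition of $x(\cdot)$, the enhanced state $x(\Gamma_-(D))$ is obtained from the oriented resolution by giving every circle the label it has in $\beta(D,R)$; hence $x(\Gamma_-(D)) = \beta(D)$, and it suffices to produce $y,z\in C_{BN}^{\bullet,\bullet}(D,R[U])$ with
\[ x(\Gamma_-(D)) \;=\; \pm\,U^{\,V_-(D)-\ell_-+\delta_-(D)}\,y \;+\; d_{BN}z, \]
the $\pm$ being absorbable into $y$. The plan is to ``peel off'' the negative vertices one at a time, each peeling costing exactly one factor of $U$: combining parts (1) and (2) of Lemma \ref{Lem:combinatoriallemmaboundonc}, whenever $\Gamma'$ is a non-empty full sub-graph of $\Gamma_-(D)$ and $v_0\in V(\Gamma')$ is non-pure or non-isolated in $\Gamma'$ one gets $x(\Gamma') = \pm\,U\,x(\Gamma'\setminus\{v_0\}) + d_{BN}(w)$ for some $w$. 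I would iterate this until no further vertex can be removed, and then, when $\delta_-(D)=1$, extract a last power of $U$ from Lemma \ref{Lem:deltaminus}.

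The real work lies in organising the order of the peelings. Write $\Gamma_-(D)=\Gamma_1\sqcup\dots\sqcup\Gamma_k$ for the decomposition into connected components and call $\Gamma_i$ \emph{closed} if all of its vertices are pure. A pure negative vertex is adjacent in $\Gamma(D)$ only to negative vertices, so a closed component has no edge leaving it; it is therefore a whole connected component of $\Gamma(D)$, all of whose vertices are negative, and such components are exactly the split components of $D$ carrying only negative crossings. Hence there are precisely $\ell_-$ closed components, and each of the other $k-\ell_-$ components has at least one non-pure vertex. For each non-closed component I would fix a spanning tree rooted at a non-pure vertex, for each closed component a spanning tree rooted arbitrarily, and then peel the vertices of $\Gamma_-(D)$ one component after another, inside each component in an order removing every vertex before its parent in the chosen tree. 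When a non-root vertex is removed, its parent is still present and joined to it by an edge of $\Gamma_-(D)$ — hence of the current full sub-graph $\Gamma'$ — so the vertex is non-isolated in $\Gamma'$ and Lemma \ref{Lem:combinatoriallemmaboundonc}(2) applies; the root of a non-closed component is non-pure, so (2) applies to it as well; the root of a closed component is pure and, once the rest of its component is gone, isolated in $\Gamma'$, and therefore stays. This performs exactly $V_-(D)-\ell_-$ peelings, since $\sum_{\Gamma_i\text{ non-closed}}|V(\Gamma_i)| + \sum_{\Gamma_i\text{ closed}}\bigl(|V(\Gamma_i)|-1\bigr) = V_-(D)-\ell_-$, and along the way $\Gamma'$ is always a non-empty full sub-graph of $\Gamma_-(D)$, so each appeal to Lemma \ref{Lem:combinatoriallemmaboundonc} is legitimate.

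Feeding the one-step reduction along this sequence gives
\[ x(\Gamma_-(D)) \;=\; \pm\,U^{\,V_-(D)-\ell_-}\,x(\Gamma'') \;+\; d_{BN}(z_0), \]
where $\Gamma''$ is the full sub-graph of $\Gamma_-(D)$ on the roots of the closed components (so $\Gamma''=\emptyset$ exactly when $\ell_-=0$) and $z_0$ is an explicit sum of terms $\pm U^{j}w_j$. If $\delta_-(D)=0$ this is already the required identity, with $y=\pm x(\Gamma'')$ and $z=z_0$. If $\delta_-(D)=1$, I would apply Lemma \ref{Lem:deltaminus} to the full sub-graph $\Gamma''$, writing $x(\Gamma'')=Uy'+d_{BN}z'$, and substitute:
\[ x(\Gamma_-(D)) \;=\; \pm\,U^{\,V_-(D)-\ell_-+1}\,y' \;+\; d_{BN}\!\bigl(\pm\,U^{\,V_-(D)-\ell_-}z' + z_0\bigr), \]
which is the statement in the case $\delta_-(D)=1$.

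I expect the main obstacle to be the second paragraph: one must pick the peeling order so that the vertex deleted at each stage genuinely satisfies a hypothesis of Lemma \ref{Lem:combinatoriallemmaboundonc}(2), and one must verify that this leaves exactly one vertex of each closed component behind — equivalently, that the closed components of $\Gamma_-(D)$ are precisely the wholly-negative split components counted by $\ell_-$. One small point: when $\ell_-=0$ the last step applies Lemma \ref{Lem:deltaminus} to the empty full sub-graph, so that lemma must be read in its stated generality; everything else is bookkeeping of signs and powers of $U$.
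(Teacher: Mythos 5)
Your proposal is correct and follows essentially the same route as the paper's proof: peel the negative vertices one component at a time along spanning trees via Lemma~\ref{Lem:combinatoriallemmaboundonc}, leaving one root per all-pure component (so exactly $\ell_-$ survivors), then invoke Lemma~\ref{Lem:deltaminus} on the surviving set when $\delta_-(D)=1$. Your explicit choice to delete each vertex before its parent (leaves first) is the correct reading of the paper's somewhat ambiguous ``consider the distance from the root,'' since it is precisely what guarantees that every non-root vertex removed is still non-isolated in the current full subgraph.
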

\begin{proof}
We will prove the theorem by a repeated use of Lemma \ref{Lem:combinatoriallemmaboundonc}. More precisely, we will define a sequence of vertices, say $v_{i} \in \Gamma_-$, such that if we set (recursively)
\[ \Gamma_{i} = \Gamma_{i-1} \setminus \{ v_{i}\}\quad\text{and}\quad \Gamma_{0} = \Gamma_{-}, \]
the vertex $v_{i}$ is either non-pure or non-isolated in $\Gamma_{i-1}$; moreover, we wish the remaining vertices, say $\{ v^\prime_{1},...,v^\prime_{\ell_{-}}\}$ to lay in pairwise disjoint connected components of $\Gamma_{-}$. In this way the set $\{ v^\prime_{1},...,v^\prime_{\ell_{-}}\}$ is a full sub-graph of $\Gamma_{-}$.

Suppose that we have the sequence of vertices $\{ v_i \}_{i}$ as above. By applying (1) of Lemma \ref{Lem:combinatoriallemmaboundonc} several times, we obtain
\[ \beta(L,R) = x(\Gamma_0) = \pm U x(\Gamma_{1}) + x(\Gamma_{0},v_{1}) = \dots =\] 
\[ = \pm U^{V_-(D) - \ell_-}x( \{ v^\prime_{1},...,v^\prime_{\ell_{-}}\})  + \sum_{i=1}^{V_-(D) - \ell_-} \pm U^{i} x(\Gamma_{i-1},v_{i}).\]
The last summand is a boundary thanks to (2) of Lemma \ref{Lem:combinatoriallemmaboundonc}. Moreover, if there is a negative edge between two neutral vertices in $\Gamma(D)$, we can apply Lemma \ref{Lem:deltaminus} with $\Gamma^\prime = \{ v^\prime_{1},...,v^\prime_{\ell_{-}}\}$, and the claim follows.

To define the sequence of vertices $v_{i} \in V(\Gamma_{-}) \setminus \{  v^\prime_{1},...,v^\prime_{\ell_-} \}$, one can start by fixing an order of the connected components of $\Gamma_{-}$, say $\Gamma_1 ,...,\Gamma_k$.
For each component consider a spanning tree. There is way to define a total order on each spanning tree once one fixes a root: first consider the distance from the root, and then choose an arbitrary order on the nodes which are at the same distance from the root.
The choice of the root can be almost arbitrary: one just have to pay attention to avoid pure vertices whenever possible.
The only spanning trees having all pure vertices are those spanning connected components of $\Gamma_-$ which are also connected components of $\Gamma$. These connected components of $\Gamma$ are exactly those corresponding to negative split components of $D$. 
Now, fix a root on each spanning tree and an order on its vertices, as described above.

Denote by $ v^\prime_{1},...,v^\prime_{\ell_-} $ the roots of the spanning trees spanning connected components of $\Gamma_-$ which are also connected components of $\Gamma$. 
Order all the vertices of $\Gamma_{-}$, except $ v^\prime_{1},...,v^\prime_{\ell_-} $, in the following way: first look at the order of the connected components to which they belong. Then, if they belong to the same component, use the order on the spanning tree. 
Finally, define $v_{i}$ to be the $i$-th vertex with respect to this total order. Notice that each $v_{i}$ is (by definition) either non-pure or non-isolated in $\Gamma_{i-1}$, and this concludes the proof.
\end{proof}

The following corollary is immediate.

\begin{corollary}\label{Cor:combinatorialboundonc}
Let $D$ be an oriented link diagram, then
\[c_{R}(D) \geq V_{-}(D) - \ell_{-}(D) + \delta_{-}(D)\]
\end{corollary}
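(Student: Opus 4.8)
The plan is to derive Corollary~\ref{Cor:combinatorialboundonc} directly from Theorem~\ref{Thm:estimateonc} by unwinding the definition of $c_R(D)$. Recall that $c_R(D)$ is defined as the maximal $k$ such that $[\beta(D,R)] = U^k x$ in homology for some $x \in H_{BN}^{0,\bullet}(D,R[U])$. So the content of the corollary is simply that the homology class $[\beta(D,R)]$ is divisible by $U^{V_-(D) - \ell_-(D) + \delta_-(D)}$.

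First I would invoke Theorem~\ref{Thm:estimateonc}, which gives an explicit chain-level decomposition
\[ \beta(D) = U^{V_-(D) - \ell_-(D) + \delta_-(D)} y + d_{BN} z \]
for some $y, z \in C_{BN}^{\bullet,\bullet}(D,R[U])$. Passing to homology, the boundary term $d_{BN}z$ vanishes, so $[\beta(D,R)] = U^{V_-(D) - \ell_-(D) + \delta_-(D)} [y]$. Here I should note that $[\beta(D,R)]$ lives in homological degree $0$ (by the Proposition preceding Proposition~\ref{Prop:betainvariants}), so the class $[y]$ can be taken in $H_{BN}^{0,\bullet}(D,R[U])$ as well — multiplication by $U$ preserves the homological grading, and $U$ is a non-zero-divisor on the relevant modules, so $[y]$ must itself sit in homological degree $0$. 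This exhibits $[\beta(D,R)]$ as $U^{V_-(D) - \ell_-(D) + \delta_-(D)}$ times a class in $H_{BN}^{0,\bullet}(D,R[U])$, which by the very definition of $c_R(D)$ forces $c_R(D) \geq V_-(D) - \ell_-(D) + \delta_-(D)$.

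One small point to address is the discrepancy between the $\ell_-$ appearing in Theorem~\ref{Thm:estimateonc} (defined there as the number of split components of $D$ with only negative crossings) and the $\ell_-(D)$ appearing in the statement of the corollary and Theorem~\ref{thm:main1} (defined in the introduction as the number of connected components of $\Gamma(D)$ made up only of negative vertices). These two quantities coincide: a connected component of $\Gamma(D)$ consisting entirely of negative vertices is precisely a connected component of $\Gamma(D)$ all of whose incident crossings in $D$ are negative, which corresponds exactly to a split component of $D$ carrying only negative crossings. I would remark on this identification briefly so the bookkeeping is transparent.

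Since essentially all the work has already been done in Theorem~\ref{Thm:estimateonc}, there is no real obstacle here — the corollary is a formal consequence. The only thing requiring a sentence of care is the passage from the chain-level statement to the homological-divisibility statement, i.e.\ checking that the witness class $[y]$ genuinely lies in $H_{BN}^{0,\bullet}(D,R[U])$ and not merely in some mixed-degree module; this follows from $U$ being homogeneous of bidegree $(0,-2)$ and acting injectively on the free part, together with the fact that $[\beta(D,R)]$ is non-torsion (point (2) of Proposition~\ref{Prop:betainvariants}), so the equation $[\beta] = U^N[y]$ cannot be degenerate.
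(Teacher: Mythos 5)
Your proposal is correct and takes essentially the same route as the paper: the paper's own proof is the one-line observation that the corollary "follows immediately from Theorem \ref{Thm:estimateonc} and from the definition of $c$-invariants." You have simply filled in the bookkeeping (passing to homology, confirming the witness class lives in homological degree $0$, and reconciling the two descriptions of $\ell_-$), all of which is sound.
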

\begin{proof}
The result follows immediately from Theorem \ref{Thm:estimateonc} and from the definition of $c$-invariants.
\end{proof}

\begin{proof}[Proof of Theorem \ref{thm:main1}]
Just apply Corollary \ref{Cor:combinatorialboundonc} to closed braid diagrams.
\end{proof}

\begin{proof}[Proof of Theorem \ref{thm:main3}]
The combinatorial bound is immediate from Corollary \ref{Cor:combinatorialboundonc} and Proposition \ref{Prop:Bennequins}.
The sharpness result is application of the upper bound in \eqref{Lobbbound}; it is sufficient to notice that if the diagram $D$ is negative or positive then $\delta_{-}(D) = 0$, and
\[o_{-}(D) - \ell_{-}(D) = - s_{-}(D) + V(D).\]
\end{proof}

\begin{rem*}
The bounds \eqref{Lobbbound} and \eqref{KawCavbound} are sharp in the case of homogeneous knot diagrams, as proved by T. Abe (\cite{Abe11}), which include the case of positive and negative diagrams. However, this is not true in general for \eqref{combinatorialboundons}, see Remark \ref{Rem:homogeneouscounterexample}.
\end{rem*}

\subsection{Comparisons and examples}\label{sec:examples}
In this subsection we show that the bound in Equation \eqref{combinatorialboundons} is independent from the lower bound in \eqref{Lobbbound}, and from the bound in \eqref{KawCavbound}; more precisely, we produce few family of examples where one of the above mentioned bounds is sharp, while the others are not. Before going on, we wish to point out two things: (1) all these bounds are strongly dependent on the diagram (\cite[Chapter 4, Subsection 2.3]{Thesis1}), and (2) since the lower bound in \eqref{Lobbbound} is strictly weaker than the bound in \eqref{KawCavbound}, it is sufficient to find an example where \eqref{combinatorialboundons} is sharper than \eqref{KawCavbound}.

\subsubsection*{An interesting unknot}

It is not difficult to provide an example of knot diagram where \eqref{KawCavbound} (and hence\footnote{Notice that \eqref{KawCavbound} and \eqref{Lobbbound} coincide in the case of knots.} \eqref{Lobbbound}) provide a better bound than \eqref{combinatorialboundons}. Such an example is given by the diagram $U(k,h)$ of the unknot depicted in Figure \ref{fig:interesting_unknot}.
\begin{figure}[h]
\centering
\begin{tikzpicture}[scale = .4, thick]

\draw[] (-2,4.25) .. controls +(0,.5) and +(-.5,0) .. (-1,5.25);
\draw[] (2,4.25) .. controls +(0,.5) and +(.5,0) .. (1,5.25);

\draw[->] (-1,0) .. controls +(1,1) and +(-1,0) .. (1,5.25);

\draw[] (-3,0) .. controls +(0,-1) and +(-1,-1) .. (-1,0);
\draw[<-] (3,0) .. controls +(0,-1) and +(1,-1) .. (1,0);
\pgfsetlinewidth{12.5*\pgflinewidth}
\draw[white] (-2,0) .. controls +(1,-2) and +(-1,-2) .. (2,0);
\draw[white] (1,0) .. controls +(-1,1) and +(1,0) .. (-1,5.25);
\pgfsetlinewidth{.08*\pgflinewidth}
\draw[] (-2,0) .. controls +(1,-2) and +(-1,-2) .. (2,0);
\draw[] (-3,4.25) .. controls +(0,3) and +(0,3) .. (3,4.25);
\draw[] (1,0) .. controls +(-1,1) and +(1,0) .. (-1,5.25);

\draw (-3.25,0) rectangle (-1.75,1.5);
\draw (-3,1.5) -- (-3,1.75) (-2,1.5) -- (-2,1.75) ;
\draw (-3.25,2.75) rectangle (-1.75,4.25);
\draw (-3,2.5) -- (-3,2.75) (-2,2.5) -- (-2,2.75) ;

\draw (3.25,0) rectangle (1.75,1.5);
\draw (3,1.5) -- (3,1.75) (2,1.5) -- (2,1.75) ;
\draw (3.25,2.75) rectangle (1.75,4.25);
\draw (3,2.5) -- (3,2.75) (2,2.5) -- (2,2.75) ;

\node at (-2.5,3.5) {$T$};
\node at (2.5,3.5) {$T$};
\node at (-2.5,.75) {$T$};
\node at (2.5,.75) {$T$};
\node at (2.5,2.25) {$\vdots$};
\node at (-2.5,2.25) {$\vdots$};

\node at (0,-3) {$U(k,h)$};
\node at (4.5,2.25) {$h$};
\node at (-4.5,2.25) {$k$};
\draw[fill] (-3.5,0) .. controls +(-.5,0) and +(.5,0) .. (-4,2.25) .. controls +(.625,0) and +(-.425,0) .. (-3.5,0);
\draw[fill] (-3.5,4.25) .. controls +(-.5,0) and +(.5,0) .. (-4,2.25) .. controls +(.625,0) and +(-.425,0) .. (-3.5,4.25);
\draw[fill] (3.5,0) .. controls +(.5,0) and +(-.5,0) .. (4,2.25) .. controls +(-.625,0) and +(.425,0) .. (3.5,0);
\draw[fill] (3.5,4.25).. controls +(.5,0) and +(-.5,0) .. (4,2.25) .. controls +(-.625,0) and +(.425,0) ..  (3.5,4.25);
\begin{scope}[shift={+(0,-.75)}]
\node at (6.5,3) {$T = $};
\draw[dashed] (8,0) rectangle (12,6);
\draw[->] (11.5,-.5) .. controls +(0,2) and +(0,-1) .. (8.5,3) .. controls +(0,1) and +(0,-2) .. (11.5,6.5);
\pgfsetlinewidth{12.5*\pgflinewidth}
\draw[white]  (8.5,-.5) .. controls +(0,2) and +(0,-1) .. (11.5,3) .. controls +(0,1) and +(0,-2) .. (8.5,6.5);
\pgfsetlinewidth{.08*\pgflinewidth}
\draw[<-] (8.5,-.5) .. controls +(0,2) and +(0,-1) .. (11.5,3) .. controls +(0,1) and +(0,-2) .. (8.5,6.5);
\end{scope}

\begin{scope}[shift={+(19,0)}]
\node at (-1,-3) {$\Gamma\left(U(k,h)\right)$};
\node at (3,2.5) {$2h $};
\node at (-5,2.5) {$2k $};
\draw[fill] (-3.5,0) .. controls +(-.5,0) and +(.5,0) .. (-4,2.5) .. controls +(.625,0) and +(-.425,0) .. (-3.5,0);
\draw[fill] (-3.5,5) .. controls +(-.5,0) and +(.5,0) .. (-4,2.5) .. controls +(.625,0) and +(-.425,0) .. (-3.5,5);
\draw[fill] (1.5,0) .. controls +(.5,0) and +(-.5,0) .. (2,2.5) .. controls +(-.625,0) and +(.425,0) .. (1.5,0);
\draw[fill] (1.5,5).. controls +(.5,0) and +(-.5,0) .. (2,2.5) .. controls +(-.625,0) and +(.425,0) ..  (1.5,5);
\end{scope}
\draw[red, thick] (18,-1) -- (20,0) (18,6) -- (20,5) (18,6) -- (18,-1) (20,1.5) -- (20,2)  (20,3.5) -- (20,3) (16,5)--(16,3.5) (16,0)--(16,1.5) ;

\draw[blue, thick] (18,-1) -- (16,0) (18,6) -- (16,5)  (16,1.5) -- (16,2)  (16,3.5) -- (16,3) (20,5)--(20,3.5) (20,0)--(20,1.5) ;

\draw[green,fill] (18,6) circle [radius=.1] ;
\draw[ white, fill] (18,6) circle [radius=.02] ;

\draw[green,fill] (18,-1) circle [radius=.1] ;
\draw[ white, fill] (18,-1) circle [radius=.02] ;

\draw[green,fill] (20,0) circle [radius=.1] ;
\draw[ white, fill] (20,0) circle [radius=.02] ;

\draw[green,fill] (16,3.5) circle [radius=.1] ;
\draw[ white, fill] (16,3.5) circle [radius=.02] ;

\draw[green,fill] (20,3.5) circle [radius=.1] ;
\draw[ white, fill] (20,3.5) circle [radius=.02] ;

\draw[green,fill] (16,1.5) circle [radius=.1] ;
\draw[ white, fill] (16,1.5) circle [radius=.02] ;

\draw[green,fill] (20,1.5) circle [radius=.1] ;
\draw[ white, fill] (20,1.5) circle [radius=.02] ;

\draw[green,fill] (16,0) circle [radius=.1] ;
\draw[ white, fill] (16,0) circle [radius=.02] ;
\draw[green,fill] (20,5) circle [radius=.1] ;
\draw[ white, fill] (20,5) circle [radius=.02] ;

\draw[green,fill] (16,5) circle [radius=.1] ;
\draw[ white, fill] (16,5) circle [radius=.02] ;
\end{tikzpicture}
\caption{The diagram $(k,h)$, the tangle $T$, and the simplified Seifert graph associated to $U(h,k)$. }
\label{fig:interesting_unknot}
\end{figure}
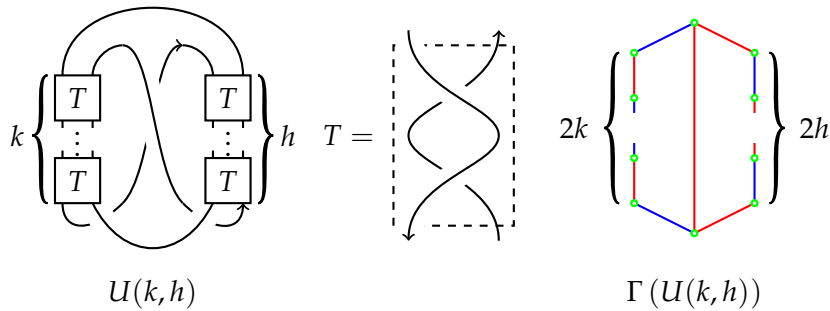

In Table \ref{tab:interesting_unknot} we report all the quantities involved in the computations of the bounds in \eqref{Lobbbound}, \eqref{KawCavbound} and \eqref{combinatorialboundons}, as well as the values of the bounds. 

\begin{table}[H]
\centering
\begin{tabular}{ccc}
Quantities & Value\\
\hline\hline
$w_{\ }$  & $1$   \\
$V_{\ }$  & $2(k+h+1)$ \\
$V_-$     & $0$ \\
$s_+$     & $k+h$ \\
$s_-$     & $k+h+1$ \\
$\ell_s$  & $1$ \\
$\ell_{\ }$  & $1$ \\
$\ell_-$  & $0$ \\
$\delta_-$& $1$ \\\hline
\eqref{Lobbbound} upper & $ 2$\\
\eqref{combinatorialboundons}& $ -2(k+h-1)$\\
\eqref{KawCavbound} & $-2$\\
\end{tabular}
\caption{Comparing the bounds for $U(k,h)$.}\label{tab:interesting_unknot}
\end{table}
We can notice that, while the bounds given in \eqref{Lobbbound} and \eqref{KawCavbound} are constant, the bound provided in \eqref{combinatorialboundons} decreases linearly with $k$ and $h$. 

\begin{rem*}\label{Rem:homogeneouscounterexample}
With the same strategy one may concoct other examples of links (or even knots) for which \eqref{KawCavbound} (or also \eqref{Lobbbound}) is sharper than \eqref{combinatorialboundons}; the addition of copies of the tangle $T$ in Figure \ref{fig:interesting_unknot} to a (suitable) oriented diagram $D$ increases the quantity $V(D)$, which comes with a minus sign, while leaves unchanged all the other quantities involved in \eqref{combinatorialboundons}. This decreases the value on the right hand side of \eqref{combinatorialboundons}. On the other hand, the addition of copies of $T$ also increases the quantity $2s_{+}(D)$, which comes with a positive sign, compensating $V(D)$ in the right hand side of \eqref{KawCavbound} and \eqref{Lobbbound}. As an example the reader may compute the r.h.s. of \eqref{combinatorialboundons} and of \eqref{KawCavbound} in the case of the diagram $K_{h}$ in Figure \ref{fig:homogeneousdiagram}. Notice that $K_{h}$ is a homogeneous diagram in the sense of \cite{Abe11}, and the corresponding knot type does not depend on $h$.
\end{rem*}

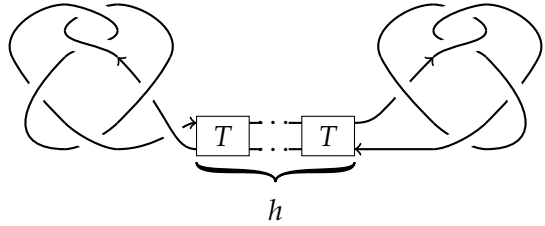
\begin{figure}[h]
\centering
\begin{tikzpicture}[scale=.35, thick]

\draw (0,4) .. controls +(-.5,0) and +(0,.5) .. (-2,3);
\draw (0,3)  .. controls +(0,-.5) and +(.5,.5) .. (-2,2);
\pgfsetlinewidth{10*\pgflinewidth}
\draw[white]  (-2,4) .. controls +(.5,0) and +(0,.5) .. (0,3);
\draw[white] (-2,3)  .. controls +(0,-.5) and +(-.5,.5) .. (0,2);
\pgfsetlinewidth{.1*\pgflinewidth}
\draw (-2,4) .. controls +(.5,0) and +(0,.5) .. (0,3);
\draw (-2,3)  .. controls +(0,-.5) and +(-.5,.5) .. (0,2);

\draw (0,4)  .. controls +(1,0) and +(.5,1) .. (2,2);
\draw (-2,4)  .. controls +(-1,0) and +(-.5,1) .. (-4,2);

\draw (0,-.5) .. controls +(-.5,-.5) and +(.5,0) .. (-2,-1.5);
\pgfsetlinewidth{10*\pgflinewidth}
\draw[white]  (-2,-.5) .. controls +(.5,-.5) and +(-.5,0) .. (0,-1.5);
\pgfsetlinewidth{.1*\pgflinewidth}
\draw (-2,-.5) .. controls +(.5,-.5) and +(-.5,0) .. (0,-1.5);

\draw[->] (0,-1.5)   .. controls +(1.5,0) and +(-.5,0) .. (3,-.5);
\pgfsetlinewidth{10*\pgflinewidth}
\draw[white] (3,-1.5)   .. controls +(-1,0) and +(2,-2) .. (0,2);
\pgfsetlinewidth{.1*\pgflinewidth}
\draw[->] (3,-1.5)   .. controls +(-1,0) and +(2,-2) .. (0,2);
\pgfsetlinewidth{10*\pgflinewidth}
\draw[white] (0,-.5)  .. controls +(.5,.5) and +(-.5,-1) .. (2,2);
\pgfsetlinewidth{.1*\pgflinewidth}
\draw (0,-.5)  .. controls +(.5,.5) and +(-.5,-1) .. (2,2);

\draw (-2,-.5)   .. controls +(-.5,.5) and +(.5,-1) .. (-4,2);
\pgfsetlinewidth{10*\pgflinewidth}
\draw[white] (-2,-1.5)   .. controls +(-2,0) and +(-2,-2) .. (-2,2);
\pgfsetlinewidth{.1*\pgflinewidth}
\draw (-2,-1.5)   .. controls +(-2,0) and +(-2,-2) .. (-2,2);

\draw[thin] (3,-.25) rectangle (5,-1.75);

\draw[thin] (9,-.25) rectangle (7,-1.75);

\draw (5.5,-.5) -- (5,-.5);
\draw (5.5,-1.5) -- (5,-1.5);
\draw (6.5,-.5) -- (7,-.5);
\draw (6.5,-1.5) -- (7,-1.5);

\node at (6,-.5) {$\dots$};
\node at (6,-1.5) {$\dots$};

\node at (4,-1) {$T$};
\node at (8,-1) {$T$};

\draw[fill] (3,-2)   .. controls +(0,-.75) and +(0,.75) .. (6,-2.75) .. controls +(0,.75) and +(0,-.75) .. (9,-2)   .. controls +(0,-.5) and +(0,.75) .. (6,-2.75) .. controls +(0,.75) and +(0,-.5) .. (3,-2);

\node[below] at (6,-3) {$h$};

\begin{scope}[shift={(12,0)}, xscale=-1]
\draw (0,4) .. controls +(-.5,0) and +(0,.5) .. (-2,3);
\draw (0,3)  .. controls +(0,-.5) and +(.5,.5) .. (-2,2);
\pgfsetlinewidth{10*\pgflinewidth}
\draw[white]  (-2,4) .. controls +(.5,0) and +(0,.5) .. (0,3);
\draw[white] (-2,3)  .. controls +(0,-.5) and +(-.5,.5) .. (0,2);
\pgfsetlinewidth{.1*\pgflinewidth}
\draw (-2,4) .. controls +(.5,0) and +(0,.5) .. (0,3);
\draw (-2,3)  .. controls +(0,-.5) and +(-.5,.5) .. (0,2);

\draw (0,4)  .. controls +(1,0) and +(.5,1) .. (2,2);
\draw (-2,4)  .. controls +(-1,0) and +(-.5,1) .. (-4,2);

\draw (0,-.5) .. controls +(-.5,-.5) and +(.5,0) .. (-2,-1.5);
\pgfsetlinewidth{10*\pgflinewidth}
\draw[white]  (-2,-.5) .. controls +(.5,-.5) and +(-.5,0) .. (0,-1.5);
\pgfsetlinewidth{.1*\pgflinewidth}
\draw (-2,-.5) .. controls +(.5,-.5) and +(-.5,0) .. (0,-1.5);

\draw[->] (0,-1.5)   .. controls +(1.5,0) and +(-.5,0) .. (3,-1.5);
\draw[->] (3,-.5)   .. controls +(-1,0) and +(2,-2) .. (0,2);

\pgfsetlinewidth{10*\pgflinewidth}
\draw[white] (0,-.5)  .. controls +(.5,.5) and +(-.5,-1) .. (2,2);
\pgfsetlinewidth{.1*\pgflinewidth}
\draw (0,-.5)  .. controls +(.5,.5) and +(-.5,-1) .. (2,2);

\draw (-2,-.5)   .. controls +(-.5,.5) and +(.5,-1) .. (-4,2);
\pgfsetlinewidth{10*\pgflinewidth}
\draw[white] (-2,-1.5)   .. controls +(-2,0) and +(-2,-2) .. (-2,2);
\pgfsetlinewidth{.1*\pgflinewidth}
\draw (-2,-1.5)   .. controls +(-2,0) and +(-2,-2) .. (-2,2);
\end{scope}
\end{tikzpicture}
\caption{The diagram $K_{h}$.}
\label{fig:homogeneousdiagram}
\end{figure}

\subsubsection*{Almost-positive links and a result of Tagami}

In this subsection, we will prove Corollary \ref{cor:AbeTagami}. This will give us a class of diagrams where \eqref{combinatorialboundons} is sharp while the lower bound in \eqref{KawCavbound} is not. The reader may think that these are the only examples of such diagrams, but this is false. There is a large class of diagrams where \eqref{combinatorialboundons} is sharp while the lower bound in \eqref{KawCavbound} is not. Examples of diagrams belonging to this family, which are not almost-positive, are given in \cite[Chapter 4, Subsection 2.5]{Thesis1} and in the last part of this section.

First we need to recall a result due to Stoimenow \cite[Corollary 5 and the proof of Theorems 5 and 6]{Stoimenov11}, we quote it as in \cite[Theorem 5.1]{AbeTagami17}.

\begin{theorem}[\cite{Stoimenov11}]\label{thm:Stoimenov}
Let $D$ be an almost-positive diagram of a non-split link $L$ with a negative crossing $p$. 
\begin{enumerate}
\item\label{Stoimenov:case1} If there is no (positive) crossing joining the same two Seifert circles of $D$ as the circles which are connected by the negative crossing $p$, we have $g_3(L)=g_3(D)$. 
\item\label{Stoimenov:case2} If there is a (positive) crossing joining the same two Seifert circles of $D$ as the circles which are connected by the negative crossing $p$, we have $g_3(L)=g_3(D)-1$. 
\end{enumerate}
Where $g_3 (D)$ is the genus of the Seifert surface of $D$ (i.e. the surface defined from $D$ using the Seifert algorithm).\qed
\end{theorem}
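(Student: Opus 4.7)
The plan is to establish matching inequalities for $g_3(L)$ in the two cases. The upper bound $g_3(L) \leq g_3(D) = \tfrac{1}{2}(c(D) - s(D) - \ell(L) + 2)$ is automatic from Seifert's algorithm, so the task reduces, in each case, to either improving the upper bound (case (\ref{Stoimenov:case2})) or proving the matching lower bound (case (\ref{Stoimenov:case1})).

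For case (\ref{Stoimenov:case2}), I would improve the upper bound by exhibiting a strictly smaller Seifert surface. The hypothesis supplies a positive crossing $q$ joining the same ordered pair of Seifert circles $\alpha, \beta$ as the negative crossing $p$; since at both crossings the strands of $\alpha$ and $\beta$ are anti-parallel (by definition of Seifert circles), $p$ and $q$ form an oppositely-signed bigon. After an isotopy supported near $\alpha \cup \beta$ I would bring $p$ and $q$ into a standard Reidemeister II configuration and cancel them. This decreases $c$ by $2$ while leaving $s$ and $\ell$ unchanged, producing a Seifert surface of genus $g_3(D) - 1$ and therefore $g_3(L) \leq g_3(D) - 1$.

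For case (\ref{Stoimenov:case1}) I would use the HOMFLY-PT polynomial $P_L(v,z)$ and Morton's genus inequality, which states that the top $z$-degree of $P_L$ is at most $2g_3(L) + \ell(L) - 1$. Applying the HOMFLY skein relation at the unique negative crossing $p$ of $D$ gives
\[
P_D(v,z) = v^{-2}\,P_{D_+}(v,z) - v^{-1}z\,P_{D_0}(v,z),
\]
where $D_+$ and $D_0$ are obtained from $D$ by replacing $p$ with its positive and oriented resolution respectively, both of them positive diagrams sharing the Seifert circles of $D$. For positive diagrams Morton's bound is sharp (Franks--Williams), so $P_{D_+}$ has top $z$-degree $c(D) - s(D) + 1$ and $P_{D_0}$ has top $z$-degree $c(D) - s(D)$. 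Hence both terms on the right-hand side contribute in $z$-degree $c(D) - s(D) + 1 = 2g_3(D) + \ell(L) - 1$, and the remaining question is whether their leading $z$-coefficients cancel.

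The main obstacle is precisely this non-cancellation step. The leading $z$-coefficients for positive diagrams admit an explicit description in terms of spanning subgraphs of the Seifert graph, and the hypothesis of case (\ref{Stoimenov:case1})---that no positive crossing of $D$ is parallel to $p$---translates into the condition that $p$ corresponds to a non-multiple edge in that graph. I would need to show that, under this hypothesis, the leading $z$-coefficients of $v^{-2}P_{D_+}$ and of $v^{-1}z\,P_{D_0}$ sit in distinct lowest $v$-degrees, so that cancellation is impossible. Once this combinatorial verification is in place, Morton's bound yields $2g_3(L) + \ell(L) - 1 \geq c(D) - s(D) + 1$, i.e.\ $g_3(L) \geq g_3(D)$, completing the proof.
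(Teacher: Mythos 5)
This statement is not proved in the paper at all: it is Stoimenow's theorem, cited verbatim (via Abe--Tagami) and marked \qed without argument, so there is no proof of the author's to compare your attempt against. What I can do is assess the attempt on its own terms. Your outline is broadly in the spirit of how such results are established (Morton's HOMFLY--PT bound for the lower estimate, crossing-reduction plus Cromwell's sharpness for positive diagrams for the upper estimate), but as written it has two genuine gaps.

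First, in case (\ref{Stoimenov:case2}) you only establish the upper bound $g_3(L) \leq g_3(D)-1$ and stop, whereas the theorem asserts equality. Your opening sentence frames the task as ``improve the upper bound (case \ref{Stoimenov:case2}) \emph{or} prove the lower bound (case \ref{Stoimenov:case1})'', but you need both sides in both cases. The fix is within reach: after cancelling $p$ against $q$ the resulting diagram is positive, and Cromwell's theorem says positive diagrams realize the Seifert genus, so $g_3(L) = g_3(D)-1$ follows. You should say so explicitly. More seriously, the cancellation itself is not justified: ``bring $p$ and $q$ into a standard Reidemeister II configuration'' is not automatic, since other Seifert circles may be attached to $\alpha$ or $\beta$ along the arcs between $p$ and $q$, obstructing the isotopy. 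Making this move rigorous requires an argument (a wave move or a local Seifert-surface compression), not just an appeal to RII. Second, in case (\ref{Stoimenov:case1}) the entire burden falls on the ``non-cancellation'' of leading $z$-coefficients in the skein expansion, and you explicitly acknowledge this as the open step. That is exactly the crux of Stoimenow's argument, and without it the proof is incomplete: the two terms $v^{-2}P_{D_+}$ and $v^{-1}z\,P_{D_0}$ do have the same top $z$-degree, and whether their contributions cancel is precisely what must be ruled out using the hypothesis that the negative edge is non-multiple in the Seifert graph. As it stands, the proposal identifies the right tools but leaves the decisive step in each case unverified.
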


\begin{proof}[Proof of Corollary \ref{cor:AbeTagami}]
Notice that in Theorem \ref{thm:Stoimenov} we have that: in case \eqref{Stoimenov:case1} $\delta_{-}(D) = 1$, while in case \eqref{Stoimenov:case2} $\delta_{-}(D) = 0$. Thus we can re-write the result of Stoimenov as follows
\[ g_{3}(L) = g_{3}(D)  - (1 - \delta_{-}(D)). \]
In terms of the Euler characteristic we have
\[ -\chi_3(L) = - \chi (D) - 2 (1- \delta_{-} (D)) = n(D) - V(D) + 2\delta_{-}(D) - 2 = \]
\[ = w(D) - V(D) + 2 \delta_{-}(D),\]
where $\chi_{3}(L)$ (resp. $\chi_{4}(L)$) is the minimum Euler characteristic of a surface bounding $L$ in $\mathbb{S}^3$ (resp. in $\mathbb{B}^4$). By property \eqref{case:boundonchi} in Proposition \ref{Prop:sproperties}, and removing a disk from a surface realizing $\chi_{3}(L)$ (resp. $\chi_{4}(L)$), we obtain
\[ s(L,\mathbb{F}) \leq - \chi_{4}(L)  + 1\leq -\chi_{3}(L)  + 1 = w(D) - V(D) + 2 \delta_{-}(D) +1.\]
Recall that $D$ is, by hypothesis, an almost-positive diagram. As a consequence we have $V_{-}(D) = \ell_{-}(D) = 0$. Hence \eqref{combinatorialboundons} can be read as
\[w(D) - V(D) + 2 \delta_{-}(D) +1 \leq s(L,\mathbb{F}).\]
By writing $\chi_{3}(L)$ and $\chi_{4}(L)$ in terms of $g_{3}(L)$ and $g_{4}(L)$, respectively, we get the desired result.
\end{proof}

\begin{proof}[Proof of Theorem \ref{thm:independence}]
It is easy to see that the lower bound in \eqref{KawCavbound} takes the value $w(D) - V(D) + 1$ on each almost-positive diagram. In particular, on each diagram satisfying condition \eqref{Stoimenov:case1} in Theorem \ref{thm:Stoimenov} the bound is not sharp. On the other hand, previously in this section we gave two families of diagrams for which \eqref{KawCavbound} is sharp while \eqref{combinatorialboundons} is not.
\end{proof}

\subsubsection*{Other diagrams}
Finally, we wish to exhibit another family of examples where \eqref{combinatorialboundons} is sharper than \eqref{KawCavbound}. In this case \eqref{combinatorialboundons} cannot be simply recovered from Tagami and Abe-Tagami results summarised in Corollary \ref{cor:AbeTagami}. This family of diagrams is depicted in Figure \ref{fig:lastexample} (consider $r > 0$ and $t, k < 0$) and the corresponding quantities needed to compute the bounds, as well as the values of the bounds, are reported in Table \ref{tab:finalexample}.
\begin{figure}[h]
\centering
\begin{tikzpicture}[scale = .2]

\draw (24,-2.5) rectangle (26,2.5);

\draw (9,-2.5) rectangle (11,2.5);
\draw (14,-1) rectangle (19,1);

\draw[thick, <-] (25.5,2.5) .. controls +(0,5) and +(0,5) .. (9.5,2.5);
\draw[thick,<-] (25.5,-2.5) .. controls +(0,-5) and +(0,-5) .. (9.5,-2.5);

\draw[thick,<-] (10.5,2.5) .. controls +(2,2) and +(-1,0) .. (14,.5);
\draw[thick, <-] (10.5,-2.5) .. controls +(2,-2) and +(-1,0) .. (14,-.5);

\draw[thick, ->] (24.5,2.5) .. controls +(0,2) and +(5,5) .. (19,.5);
\draw[thick,->] (24.5,-2.5) .. controls +(0,-2) and +(5,-5) .. (19,-.5);

\node[rotate = 90]  at (25,0) {$2k $-$ 1$};
\node[rotate = 90]  at (10,0) {$2r +1$};
\node  at (16.5,0) {$2t + 1$};

\draw[fill] (0.5,2.1) .. controls +(.25,0) and +(-.2,0) .. (.75,0) .. controls +(-.25,0) and +(.2,0) .. (0.5,2.1);
\draw[fill] (0.5,-2.1) .. controls +(.25,0) and +(-.2,0) .. (.75,0) .. controls +(-.25,0) and +(.2,0) .. (0.5,-2.1);

\draw[fill] (-10.5,2.1) .. controls +(-.25,0) and +(.2,0) .. (-10.75,0) .. controls +(.25,0) and +(-.2,0) .. (-10.5,2.1);
\draw[fill] (-10.5,-2.1) .. controls +(-.25,0) and +(.2,0) .. (-10.75,0) .. controls +(.25,0) and +(-.2,0) .. (-10.5,-2.1);

\node  at (-12,0) {$2r$};
\node  at (2.5,0) {$-2k$};

\draw[thick, blue] (-5,2) -- (-5,-2);
\node[left] at (-5,0) {$-$};

\draw[thick, blue] (0,-2) -- (0,-1);
\draw[thick, blue] (0,2) -- (0,1);

\draw[thick, red] (-10,-2) -- (-10,-1);
\draw[thick, red] (-10,2) -- (-10,1);

\draw[thick, blue] (-5,2) -- (0,2);
\node[above] at (-2.5,2) {$-$};

\draw[thick, blue] (-5,-2) -- (0,-2);
\node[above] at (-2.5,-2) {$-$};

\draw[thick, red] (-5,2) -- (-10,2);
\node[above] at (-7.5,2) {+};

\draw[thick, red] (-5,-2) -- (-10,-2);
\node[below] at (-7.5,-2) {+};

\draw[fill,green] (-5,2) circle (.2);
\draw (-5,2) circle (.2);

\draw[fill,green] (-5,-2) circle (.2);
\draw (-5,-2) circle (.2);

\draw[fill,blue] (0,2) circle (.2);
\draw (0,2) circle (.2);

\draw[fill,blue] (0,-2) circle (.2);
\draw (0,-2) circle (.2);

\draw[fill,red] (-10,2) circle (.2);
\draw (-10,2) circle (.2);

\draw[fill,red] (-10,-2) circle (.2);
\draw (-10,-2) circle (.2);

\end{tikzpicture}
\caption{The diagram $D(r,k,t)$, and its simplified Seifert graph in the case $r>0$ and $k,t <0$. The absolute value of the numbers on the boxes indicate the number of crossings (of the same sign as the number) to be placed in the box.}\label{fig:lastexample}
\end{figure}
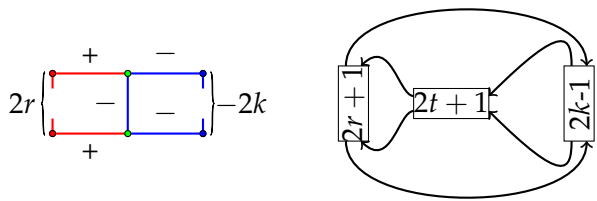

\begin{table}[h]
\centering
\begin{tabular}{ccc}
Quantities & Value\\
\hline\hline
$w_{\ }$  & $2(r+k+t) + 1$   \\
$V_{\ }$  & $2r - 2k + 2$ \\
$V_-$     & $-2k$ \\
$s_+$     & $-2k + 1$ \\
$s_-$     & $2r + 1$ \\
$\ell_s$  & $1$ \\
$\ell_{\ }$  & $1$ \\
$\ell_-$  & $0$ \\
$\delta_-$& $1$ \\\hline
\eqref{Lobbbound} upper & $ 2t + 2$\\
\eqref{combinatorialboundons}& $ 2t + 2$\\
\eqref{KawCavbound} & $2t$\\
\end{tabular}
\caption{Comparing the bounds on $s(D(r,t,k),\mathbb{F})$, for $r>0$ and $t,k < 0$.}\label{tab:finalexample}
\end{table}

In particular, we have proved the following.

\begin{proposition}
The $s$-invariant of the knot represented by the diagram $D(r,t,k)$, for $r > 0$ and $t, k < 0$, is equal to $2t + 2$.\qed
\end{proposition}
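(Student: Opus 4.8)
The plan is to trap the value of $s(D(r,t,k);\mathbb{F})$ between the lower bound of Theorem \ref{thm:main3} and the upper bound of Lobb, and to observe that for this family the two bounds coincide. Since every quantity involved is diagrammatic, the proof is an exercise in reading off Table \ref{tab:finalexample} from Figure \ref{fig:lastexample} and performing a one-line arithmetic identity twice.

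First I would analyse the diagram $D(r,t,k)$ in the range $r>0$, $t,k<0$: draw the oriented (Seifert) resolution, identify the Seifert circles with the vertices of the simplified Seifert graph $\Gamma(D(r,t,k))$ drawn on the right of Figure \ref{fig:lastexample}, and classify each edge as positive, negative, or neutral. From this I would extract the entries of Table \ref{tab:finalexample}, in particular $w = 2(r+k+t)+1$, $V = 2r-2k+2$, $V_- = -2k$, $\ell_- = 0$, $s_- = 2r+1$, and $\delta_-(D(r,t,k)) = 1$. These are local inspections of the twist regions (the purely negative twist box contributing the $-2k$ negative vertices, the configuration of the ``blue'' neutral vertices producing the negative edge between two neutral vertices that makes $\delta_- = 1$), so I will record them as bookkeeping rather than spelling each out.

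Next I would substitute these values into the upper half of Lobb's inequality \eqref{Lobbbound}, $s(L;\mathbb{F}) \le w(D) + V(D) - 2 s_-(D) + 1$, which collapses to $s(D(r,t,k);\mathbb{F}) \le 2t+2$. Then I would substitute the same values into inequality \eqref{combinatorialboundons} of Theorem \ref{thm:main3}, $s(L;\mathbb{F}) \ge w(D) - V(D) + 2 V_-(D) - 2 \ell_-(D) + 2 \delta_-(D) + 1$; here the $-V$ and $2V_-$ terms cancel against $w$, the $2\delta_-$ term survives, and the right-hand side is again exactly $2t+2$. Combining the two inequalities yields $s(D(r,t,k);\mathbb{F}) = 2t+2$ for every field $\mathbb{F}$.

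The only step where anything can go wrong is the graph-theoretic bookkeeping, and within it the verification that $\delta_-(D(r,t,k)) = 1$ together with $V_- = -2k$ and $\ell_- = 0$: one must check that $\Gamma(D(r,t,k))$ genuinely contains a negative edge between two neutral vertices and that the negative vertices are not isolated as a purely negative component. This is precisely the feature that distinguishes \eqref{combinatorialboundons} from \eqref{KawCavbound} — without the $\delta_-$ term the lower bound would only give $s \ge 2t$ and the sandwich would fail — so the example is designed so that the extra $2\delta_-$ is exactly what is needed to meet Lobb's upper bound. Once Table \ref{tab:finalexample} is confirmed, the remaining content is the arithmetic above.
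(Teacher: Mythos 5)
Your proof is correct and is essentially identical to the paper's own argument: the paper establishes the proposition precisely by recording in Table \ref{tab:finalexample} the diagrammatic quantities for $D(r,t,k)$ and observing that the upper bound in \eqref{Lobbbound} and the lower bound \eqref{combinatorialboundons} both evaluate to $2t+2$, so the $s$-invariant is trapped. The only small imprecision in your write-up is the phrase that ``$-V$ and $2V_-$ cancel against $w$'' --- the actual cancellation is $w - V + 2V_- = 2t - 1$, so the $r$ and $k$ contributions cancel and the constant is then fixed by $2\delta_- + 1$ --- but the arithmetic you intend is the same and the conclusion holds.
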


\begin{rem*}
The knot represented by the diagram in Figure \ref{fig:lastexample} is a 2-bridge knot. In particular, in this case the $s$-invariant coincide with the signature. However, it is easy to produce non-2-bridge (and in general non-thin) examples where \eqref{combinatorialboundons} is sharp (e.g. the diagram of the $9_{42}$ in Figure \ref{fig:942}).
\end{rem*}

\section{Final remarks and open questions}

In this last section we wish to comment the examples above, and to discuss some open questions. The first, and most natural, question one could ask is the following: for each link type $\lambda$ there always exists a diagram $D$ such that at least one among the lower bounds given in \eqref{KawCavbound}, \eqref{Lobbbound}, and \eqref{combinatorialboundons}, is sharp?

Given that all these combinatorial bounds strongly depend on the diagram, one might think that this question may have a positive answer. However, surprisingly enough, it is simple to show that the answer is negative. The bounds  given in \eqref{KawCavbound}, \eqref{Lobbbound}, and \eqref{combinatorialboundons} do not depend on the characteristic of the field $\mathbb{F}$. On the other hand, the $s$-invariant depends on $char(\mathbb{F})$; for example, it is known that $s(14_{n19265};\mathbb{F}_{2}) < s(14_{n19265};\mathbb{Q})$ (see \cite[Remark 6.1]{LipshitzSarkar12}). Thus, for each diagram $L$ of the knot $14_{n19265}$ the lower bounds in \eqref{KawCavbound}, \eqref{Lobbbound}, and \eqref{combinatorialboundons} cannot be sharp over $\mathbb{Q}$.

\begin{rem*}
If we consider the knot $m(14_{n19265})$, then 
\[s\left(m(14_{n19265});\mathbb{F}_{2}\right)  = -s(14_{n19265};\mathbb{F}_{2}) > - s(14_{n19265};\mathbb{Q}) = s\left(m(14_{n19265});\mathbb{Q}\right). \]
Thus the lower bounds in \eqref{KawCavbound}, \eqref{Lobbbound}, and \eqref{combinatorialboundons} cannot be sharp over $\mathbb{F}_2$, for each diagram of $m(14_{n19265})$. This reasoning shows that, if $s(\kappa;\mathbb{F}) \ne s(\kappa;\mathbb{K})$, for some fields $\mathbb{F}$ and $\mathbb{K}$ and some knot $\kappa$, then the above mentioned combinatorial bounds cannot be sharp neither over $\mathbb{F}$ nor over $\mathbb{K}$.
\end{rem*}

Given a field $\mathbb{F}$, either $s(14_{n19265};\mathbb{F}) \ne s_{}(14_{n19265};\mathbb{F}_2)$ or $s(14_{n19265};\mathbb{F}) = s(14_{n19265};\mathbb{F}_2)$, which is different from $ s(14_{n19265};\mathbb{Q})$. Thus we may conclude that there is no field $\mathbb{F}$ such that the combinatorial bounds given in either \eqref{KawCavbound}, \eqref{Lobbbound}, or \eqref{combinatorialboundons} are sharp for all link types\footnote{Here we mean that for each link type exists a diagram such that the given bound is sharp.}.

\begin{rem*}
It is still unknown (at least to the best of the author's knowledge) if for each pair of fields, say $\mathbb{F}$ and $\mathbb{K}$, there exists a link-type $\lambda$ such that $s(\lambda;\mathbb{F}) \ne s(\lambda; \mathbb{K})$ (cf. \cite[Question 6.1]{LipshitzSarkar12}).
\end{rem*}

So, we are naturally led to the following question, which remains still open.

\begin{question}
Is there a combinatorial (either upper or lower) bound on the value of $s(\lambda;\mathbb{F})$, which depends on the characteristic of the field $\mathbb{F}$, and is sharp for all link types?
\end{question}

Since the only obstruction we provided to the sharpness of the combinatorial bounds in \eqref{KawCavbound}, \eqref{Lobbbound}, and \eqref{combinatorialboundons} is the difference between the value of the $s$-invariant for different fields, it makes sense to ask the following question.

\begin{question}\label{question:sharpnessforsindeponF}
Given an oriented link type $\lambda$ such that $s(\lambda; \mathbb{F})$ does not depend on $\mathbb{F}$, is there a diagram $D$ representing $\lambda$ such that one among the lower bounds in \eqref{KawCavbound}, \eqref{Lobbbound}, and \eqref{combinatorialboundons} is sharp?
\end{question}

First, let us notice that the family of links such that $s(\lambda; \mathbb{F})$ does not depend on $\mathbb{F}$ is non empty. For example, it contains all $Kh$-pseudo-thin links. To be precise is strictly larger than the family of $Kh$-pseudo-thin links. An example of non-pseudo-thin knot whose $s$-invariant does not depend on the characteristic of the field is the $9_{42}$. In this case, the integral Khovanov homology contains only $2$-torsion (see \cite{KnotAtlas}), thus is equal in every field of characteristic different than $2$. The $s$-invariant over $\mathbb{F}_2$ can be computed using \texttt{knotkit} (\cite{Knotkit}), and is $0$. Since the $9_{42}$ is a knot, the $s$-invariant can be read directly from Bar-Natan homology (\cite[Chapter 2]{Thesis1}). Moreover, in this case Bar-Natan homology can be directly computed from Khovanov homology (\cite[Appendix B]{Thesis1}), leading us to $s(9_{42};\mathbb{F}) = 0$, for $char(\mathbb{F}) \ne 2$.

\begin{figure}
\centering
\includegraphics[scale=.2]{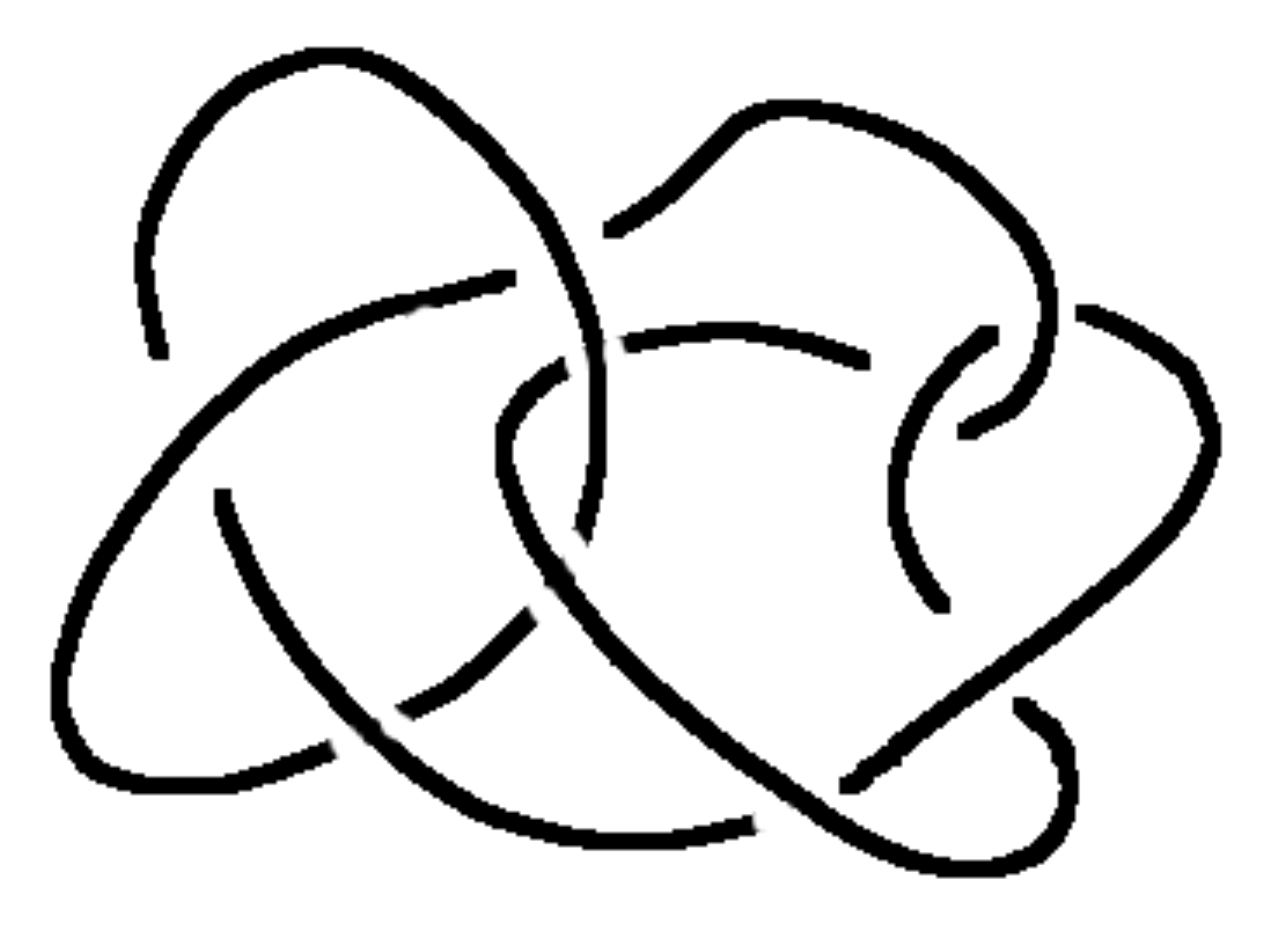} 
\caption{A diagram of $9_{42}$.}
\label{fig:942}
\end{figure}

\begin{rem*}
The diagram in Figure \ref{fig:942} is such that the bound in Equation \eqref{combinatorialboundons} is sharp. In particular, this gives an example of diagram of a non-pseudo-thin knot for  which the bound in Equation \eqref{combinatorialboundons} is sharp.
\end{rem*}

Question \ref{question:sharpnessforsindeponF} remains still open. However, something can be said in the case where the lower bound is the one provided by \eqref{combinatorialboundons}. This bound comes from an Bennequin $s$-inequality plus an estimate on the value of the $c$-invariants. Thus, an obstruction for the bound in \eqref{combinatorialboundons} to be sharp on a certain diagram $L$ is the sharpness of the Bennequin $s$-inequality on $D$. It follows that an affirmative answer for Question \ref{question:sharpnessforsindeponF}, restricted to the bound coming from \eqref{combinatorialboundons}, is subordinate an affirmative answer to the following question.

\begin{question}\label{question:sharpnessofBennequinforsindeponF}
Given an oriented link type $\lambda$ such that $s(\lambda; \mathbb{F})$ does not depend on $\mathbb{F}$, is there a diagram $D$ representing $\lambda$ such that the Bennequin $s$-inequality in Proposition \ref{Prop:Bennequins} is sharp?
\end{question}

We notice that, even if in Question \ref{question:sharpnessforsindeponF} it was necessary to ask the independence of the $s$-invariant from the field, there is no need to restrict to this setting in the case of the Bennequin $s$-inequality. In all examples known to the author the inequality is sharp.

\begin{question}\label{question:sharpnessofBennequin}
Is the Bennequin $s$-inequality in Theorem \ref{thm:main2} sharp for all transverse links?
\end{question}

The author, in \cite{TransFromKhType17}, proved some sufficient conditions on a link type $\lambda$ to be such that the Bennequin $s$-inequality is sharp for each transverse representative of $\lambda$. However, we are still far from knowing the complete answer to Question \ref{question:sharpnessofBennequin}. 

We remark that if Question \ref{question:sharpnessofBennequin} admits a positive answer, then the $c$-invariants are ineffective transverse invariants. More precisely, if the Bennequin $s$-in\-e\-qua\-li\-ty is sharp then the$ c$-invariants can be written as an affine combination of a classical invariant ($sl(B)$) and a topological invariant ($s(B;\mathbb{F})$). Thus, the $c$-invariants cannot be effective. In particular, the (vanishing of the homology class of the) Plamenevskaya invariant will be a non-effective invariant.

The $s$-invariant is part of a more general family of invariants, called slice-torus invariants, which can be defined as follows. We remark that our definition is normalized differently from the original definition due to L. Lewark.

\begin{definition}
A \emph{slice-torus invariant} is an integer link invariant $\nu$ such that:
\begin{itemize}
\item[1.] $\nu$ defines an homomorphism from the concordance group of knots to $\mathbb{Q}$;
\item[2.] $2g_{4}(L) \geq \vert \nu(L) \vert$, where $g_{4}$ denotes the slice genus, for each knot $L$;
\item[3.] For each torus knot $T(p,q)$ we have that
\[ \nu(T(p,q)) =  (p-1)(q-1)\]
\end{itemize}
\end{definition}

The family of the slice torus invariants, as defined above, include (a suitable normalization of) the $\tau$ invariant from knot Floer homology \cite{OzsvathSzabo03a}, as well as $\mathfrak{sl}_n$ generalizations of the $s$-invariants \cite{LobbLewark15}. In \cite{Kawamura15} T. Kawamura proved that in \eqref{KawCavbound} the $s$-invariant can be replaced with any slice-torus invariant.

\begin{question}\label{question:slice-torus}
Let $D$ be an oriented link diagram representing the link $L$. Does the bound
\[\nu(L) \geq w(D) - V(D) + 2 V_-(D) - 2 \ell_{-}(D) + 2\delta_{-}(D) + 1\]
hold for any slice-torus invariant $\nu$?
\end{question}

The point of Question \ref{question:slice-torus} is to understand how much \eqref{combinatorialboundons} is a ``topological bound''. Personally, the author believes that the answer is negative, as he expects the bound in \eqref{combinatorialboundons} to be ``intrinsically Khovanov-theoretic''. However, there is the possibility that adapting the techniques from Stoimenow \cite{Stoimenov11} and Kawamura \cite{Kawamura15} a positive answer to Question \ref{question:slice-torus} can be found.

\bibliography{Bibliography.bib}
\bibliographystyle{plain}

\end{document}